\journal{Journal of Theoretical Biology}
\def\RR{\hbox{I\kern-.2em\hbox{R}}}
\newtheorem{Lemma}{Lemma}
\newtheorem{Prop}{Proposition}
\newtheorem{Rem}{Remark}
\newtheorem{Ex}{Example}
\newtheorem{Th}{Theorem}
\begin{document}

\begin{frontmatter}

\title{On 
the interplay of harvesting 
and various diffusion strategies for spatially heterogeneous populations
} 

\author[label1]{Elena Braverman}
\author[label2]{Ilia Ilmer}

\address[label1]{Dept. of Math. and Stats., University of
Calgary,2500 University Drive N.W., Calgary, AB, Canada T2N 1N4; e-mail
maelena@ucalgary.ca, phone 1-(403)-220-3956, fax 1-(403)--282-5150 (corresponding author)}

\address[label2]{Dept. of Math. and Stats., University of
Calgary and Dept. of Computer Science, City University of New York-College of Staten Island, 
The Graduate Center of City University of New York, 365 Fifth Avenue, New York 10314}





\begin{abstract}
The paper explores the influence of harvesting (or culling) on the outcome of the competition
of  two species in a spatially heterogeneous environment. The harvesting effort is assumed to be proportional to the space-dependent intrinsic growth rate.
The differences between the two populations are the diffusion strategy and the harvesting intensity. 
In the absence of harvesting, competing populations may either coexist, or one of them may bring the other to extinction. If the latter is the case, introduction of any level of harvesting to the successful species guarantees survival to its non-harvested competitor. In the former case, there is a strip of ``close enough" to each other harvesting rates leading to preservation of the original coexistence.
Some estimates are obtained for the relation of the harvesting levels providing either coexistence or competitive exclusion. 
\end{abstract}

\begin{keyword}
harvesting \sep system of partial differential equations \sep competition \sep global
attractivity \sep carrying capacity driven diffusion \sep 
ideal free pair


\noindent
{\bf AMS  subject classification:} 92D25, 35K57 (primary), 35K50, 37N25
\end{keyword}

\end{frontmatter}

\section{Introduction}

Harvesting is quite a common cause for extinction of exploited populations. However, the study
of one harvested species \cite{Roques2007}, generally, does not adequately describe the population dynamics in multi-species environment, as it ignores interactions, such as competition, predation or mutualism. If, in the absence of harvesting, two populations coexist, will intervention bring the harvested population to extinction? If originally the resident species was immune to a possible invasion
by a competitor, will its being harvested open the habitat to a possible invasion? If two species coexist and compete for the same resources, and both are harvested, which relations between the harvesting rates will preserve coexistence?   

Spatial structure, as well as competition for resources with other species, can significantly influence the outcome of the competition. Structural or spatial heterogeneity can be beneficial for exploited species, for example, providing refuge areas, or harmful, for instance, creating age-specific over-exploitation, or crucial disruptions in the spread over habitat, or vulnerability under seasonal migrations.  

Studies on harvesting were usually focused on establishing the maximum sustainable yield, investigating at the same time optimality and sustainability of harvesting policies. Usually, they were limited to one population, see, for example, 
\cite{Ali2009,Bai2005,Brav,Engen2014,Goddard2011,Korobenko2013,Roques2007}. 
Special caution is required for the choice of harvesting strategies when the Allee effect is incorporated in population dynamics \cite{Ali2009,Goddard2011}.
More interesting scenarios arise when harvesting is applied to one or more of several interacting populations 
\cite{Chadhouri1986,Clayton1997,Ekerhovd2016,Engen2014,LiuBai2016,Rowcliffe2003,Zhou1982}. 
In certain cases, a generalist predator was introduced to control invasion 
and was considered as a source of harvesting \cite{Madec2017}. 

While either homogeneous environments (modeled by a system of ordinary differential equations)  or non-selective harvesting can result in competitive exclusion of all but one species \cite{Chadhouri1986,Engen2014}, consideration of structured environment and different types of harvesting applied to distinct species, can lead to coexistence as well, see, for example, \cite{Zhou1982}. 
In certain cases, populations with and without diffusion  \cite{Madec2017}, or in the case of small diffusion coefficients \cite{Ang2016} were considered.
In the non-harvested case for two competing species, 
common scenarios are either competitive exclusion or coexistence \cite{Cantrell_Cosner2017}.
If competitors differ by one parameter only, for example, dispersal strategies or carrying capacities, this led to a competitive exclusion to favor a more evolutionarily advantageous property or strategy.
Still, for harvested species, the interplay of two or more factors (different spatial distributions, model parameters and harvesting rates) can eventually lead to anyone of the three different scenarios, two of competitive exclusion, and coexistence. In principle, if both species are over-exploited, the fourth scenario of the total extinction is possible.

In the present paper, we investigate a competition of two species choosing different dispersal strategies and being harvested with various harvesting efforts.
Evolutionary advantage or disadvantage of fast and slow, random or directed movements are closely connected to heterogeneity of the environment.  
For spatially homogeneous environments and regular dispersal, a slower diffuser \cite{Doc} is a competition winner, if no other differences between competitors exist. If the environment is spatially heterogeneous then the species using knowledge on the carrying capacity of the environment in its dispersal strategy can bring its regularly diffusing competitor to extinction
\cite{BKK2015,C4,C3,Korobenko2014}.
The ideal free distribution describes the spatial structure when any movement would decrease the population fitness. 
For spatially heterogeneous but stationary environments, this may correspond to the case of the population density  
coinciding with the carrying capacity of the environment. 
The idea that an evolutionarily stable strategy corresponds to the case, when the ideal free distribution is a limit solution was justified, for example, in \cite{Averill,C1}. However, the logistic equation with a regular diffusion (the Fisher equation) does not have an ideal free distribution as a solution, which is extensively discussed in the recent paper \cite{Ang2016}. Moreover, faster diffusion leads to a disadvantage in a competition \cite{Doc}.
Several developments, such as introducing advection or considering the modified dispersal term, were aimed to remedy this situation \cite{Averill,Brav,C1,C4,C2,C3,Lam2014}. An interesting investigation in this area in the discrete case  was recently reported in \cite{Cantrell_Cosner2017}. In the continuous case for two (or more) species, cooperation in resources consumption is possible when the densities of the two populations complement each other forming an ideal free pair \cite{Averill,CAMWA2016}. For such a pair, the total density exactly matches the carrying capacity, and the diffusion strategies are aligned with these distributions. This can describe the situation when accessibility of resources is spatially different for the two species, i.e. specialization by resource consumption. 

For a given type of diffusion, the average population rates 
depend on the two important parameters, assumed to be space-dependent in heterogeneous environments: the intrinsic growth rate $r$ and the carrying capacity $K$, see \cite{Ang2016,Engen2014} and references therein.
If a population is exploited, different harvesting strategies can be introduced, such as constant level, proportional (constant effort) and threshold harvesting. 
For the latter case, the deduction is constant for high enough population rates but either becomes close to proportional 
\cite{Roques2007} or is canceled \cite{Engen2014} when the population level is below a prescribed threshold. 

Most results on the outcomes of the competition stipulated by various diffusion strategies and other parameters, are based on the theory of monotone dynamical systems, see \cite{Hsu} and its recent generalization in \cite{Lam2016}, and the eigenvalue technique developed and used in \cite{CC,C1,C4,C2,C3}.

In several cases, models with harvesting can be reduced to non-harvested systems with modified parameters, see 
\cite{Brauer1982}. We apply a similar modification which leads to the update of the original space-dependent intrinsic growth rates and carrying capacities.
The main focus of the present paper is on the interplay of various diffusion strategies and levels of harvesting for two competing populations. To efficiently handle the model, we introduce the following assumptions:

\begin{itemize}
\item
both competing species have proportional time-independent intrinsic growth rates and the same stationary carrying capacities; 
\item
one or both species can be subject to space-dependent harvesting, and the spatial harvesting level is proportional to the intrinsic growth rate;
\item
diffusion strategies can vary but both populations are isolated, there is no flux through the boundary of the common habitat.
\end{itemize}

The first assumption that intrinsic growth rates and carrying capacities coincide is quite common for a comparison, whenever 
one or more other properties are evaluated (in our case, a diffusion strategy and a harvesting level). 
Harvesting of a percentage of new growth, which is proportional to the current population and the space-dependent intrinsic growth rate, is quite natural in resource management. This means that higher harvesting efforts are applied to more fertile areas with higher harvest expectations. The third assumption that both populations are considered in an isolated area is also general in population dynamics. Evolutionary advantage is evaluated without the interference of emigration and immigration through the boundaries. The system describes an isolated area and can be modeled in laboratory conditions.

Under certain assumptions, an equation describing a harvested species can be reduced to a regular diffusive logistic
equation, with an intrinsic growth rate and carrying capacity updated. 
Combined with non-harvested species, we obtain a system with different diffusion strategies 
and carrying capacities considered earlier, and distinct intrinsic growth rates, its particular cases were investigated in 
\cite{BKK2015}. 

Following \cite{BKK2015}, we can conclude that under any level of harvesting, the advantage of 
the carrying capacity driven dispersal is no longer sufficient to bring the competing species to extinction and observe,
using numerical simulations, how a growing harvesting rate brings exploited population to extinction.
However, in the present paper we in addition evaluate harvesting levels for which coexistence 
and extinction should happen. In particular, explicit sufficient estimates are given for each scenario to occur,  
unlike \cite{BKK2015}, where observations on transitions from coexistence to extinction were based 
on numerical simulations only. 
Moreover, in the present paper, sufficient coexistence bounds are compared to numerically computed maximal harvesting levels when species coexist. 

We also consider the situation when, without harvesting, the two populations sustain. 
This is attained when the dispersal strategies form an ideal free pair \cite{Averill,CAMWA2016} and
can describe some specialization (for example, one species foraging primarily in shallow and the other in deep water).
In this case, both species can be subject to harvesting. 
If harvesting efforts are close, there is coexistence. If we fix a harvesting effort for the second species, as harvesting of the first one exceeds this effort and increases, its average density  decreases, necessarily bringing it to extinction for intensive enough harvesting. However, for this fixed effort being small enough, lower levels of harvesting
and even its absence cannot, generally, bring the second species to extinction. 
Some estimates when each of the the three scenarios necessarily happens, are presented. 

The paper is organized as follows. In Section~\ref{sec:prelim}, we state the problem and the main results of the paper. Section~\ref{sec:numer} illustrates these conclusions with numerical simulations and compares theoretically computed constants to numerical bounds.
In Section~\ref{sec:discuss} we discuss conclusions of the present paper and outline some further directions in the study of the influence of dispersal strategies on extinction and survival of exploited populations.
Section~\ref{sec:proofs} contains proofs of the main results and all the auxiliary statements required.

\section{Preliminaries and Main Results}
\label{sec:prelim}

We consider an isolated spatially distributed competitive system, with harvesting 
effort proportional to the spatially heterogeneous but time-independent intrinsic growth rate. 
Both populations exist in a spatial domain, in an isolated environment, there is no flux through the boundary of the domain. 

Let $\Omega$ be an open nonempty bounded domain of $\mathbb{R}^n$ with 
$\partial\Omega\in C^{2+p}$, $0<p<1$.
We assume that $K(x)$, $K_i(x)$, $i=1,2$, $P(x)$ and $Q(x)$  are in the class $C^{1+p}(\overline{\Omega})$, and they are positive for any $x \in \overline{\Omega}$, $r_i>0$, $i=1,2$, $r(x) \geq 0$, $x \in \overline{\Omega}$, and $r(x) >0$ in an open nonempty subdomain of $\Omega$.
Denote by $u(t,x)$ and $v(t,x)$ population densities, assume that the carrying capacity  $K(x)$ and the intrinsic growth rate $r(x)$ are time independent.

The main object of the paper is the system
\begin{equation}
\label{eq:main_problem}
\begin{aligned}
&\frac{\partial u(t,x)}{\partial t} = \nabla \cdot \left[ a(x) \nabla \left( \frac{u(t,x)}{P(x)} \right) \right] + r(x)u(t,x) \left(1-\frac{u(t,x)+v(t,x)}{K(x)} \right) -\alpha r(x)u(t,x), \\
&\frac{\partial v(t,x)}{\partial t} = \nabla \cdot \left[ b(x) \nabla \left( \frac{v(t,x)}{Q(x)} \right) \right] + r(x)v(t,x) \left(1-\frac{u(t,x)+v(t,x)}{K(x)} \right)-\beta r(x)v(t,x), \\
&t>0, \quad x \in \Omega, \\ 
&\frac{\displaystyle \partial u}{\displaystyle \partial n} - \frac{\displaystyle u}{\displaystyle P} \frac{\displaystyle 
\partial P}{\displaystyle \partial n}  
=\frac{\displaystyle \partial v}{\displaystyle \partial n}  - 
\frac{\displaystyle v}{\displaystyle Q} \frac{\displaystyle \partial Q}{\displaystyle \partial n} =0,~x\in 
{\partial}{\Omega},\\
& u(0,x)=u_0(x), \;v(0,x)=v_0(x),\;x\in{\Omega}.
\end{aligned}
\end{equation}
For the initial conditions, we assume that $u_0(x) \geq 0$, $v_0(x) \geq 0$, $x \in \overline{\Omega}$, and both $u_0$ and $v_0$ are positive in an 
open nonempty subdomain of $\Omega$.

The type of diffusion in \eqref{eq:main_problem}, which was discussed in \cite{CAMWA2016}, includes the following models as special cases (here we consider the first equation in \eqref{eq:main_problem} only):
\begin{itemize}
\item
If $P$ and $a$ are both constant, we obtain  the regular diffusion $d \Delta u$, $d>0$.
\item
If $a$ is constant, the dispersal term $\Delta(u/P)$ in the particular case when $P= \frac{1}{d} K$   
was introduced in \cite{Brav} and later used in \cite{Korobenko2013,Korobenko2014}, while 
the constant $P$ and $a=d/K$, $d>0$, leading to $d \nabla \cdot (\frac{1}{K}\nabla v)$, 
was studied in \cite{Korobenko2014}.
\item
If $a_1 =\mu_1 P$, $r=K$ and $\displaystyle P =e^{\mu_2 K}$, 
where $\mu_i$, $i=1,2$ are space-independent, we 
obtain the directed advection $\displaystyle \nabla \cdot \left[ \mu \nabla u - \nu u \nabla K \right]$
with $\mu=\mu_1$, $\nu=\mu_1 \mu_2$, considered, for example, in \cite{Averill,C4,C2,C3}.
\end{itemize}

We will also use a modification of \eqref{eq:main_problem} without harvesting ($\alpha=\beta=0$) 
but with different proportional growth rates and carrying capacities
\begin{equation}
\label{eq:main_modified}
\begin{aligned}
&\frac{\partial u(t,x)}{\partial t} = \nabla \cdot \left[ a(x) \nabla \left( \frac{u(t,x)}{P(x)} \right) \right] + r_1 r(x)u(t,x) \left(1-\frac{u(t,x)+v(t,x)}{K_1(x)} \right), \\
&\frac{\partial v(t,x)}{\partial t} = \nabla \cdot \left[ b(x) \nabla \left( \frac{v(t,x)}{Q(x)} \right) \right] + r_2 r(x)v(t,x) \left(1-\frac{u(t,x)+v(t,x)}{K_2(x)} \right), \\
&t>0, \quad x \in \Omega, \\
&\frac{\displaystyle \partial u}{\displaystyle \partial n} - \frac{\displaystyle u}{\displaystyle P} \frac{\displaystyle 
\partial P}{\displaystyle \partial n}  
=\frac{\displaystyle \partial v}{\displaystyle \partial n}  - 
\frac{\displaystyle v}{\displaystyle Q} \frac{\displaystyle \partial Q}{\displaystyle \partial n} =0,~x\in 
{\partial}{\Omega},\\
&u(0,x)=u_0(x), \;v(0,x)=v_0(x),\;x\in{\Omega}.
\end{aligned}
\end{equation}

Further, we explore semi-trivial stationary solutions $(u^*,0)$ and $(0,v^*)$, where $u^*$ and $v^*$ satisfy
\begin{equation}
\nabla \cdot \left[ a(x) \nabla \left( \frac{u^{\ast}(x)}{P(x)} \right) \right] + 
r_1 r(x)u^* (x) \left( 1-\frac{u^*(x)}{K_1(x)} \right)=0,\;x\in\Omega,~~~
\frac{\partial (\frac{u^*}{P})}{\partial n}=0,\;x\in\partial\Omega
\label{semi_u}
\end{equation}
and
\begin{equation}
\nabla \cdot \left[ b(x) \nabla \left( \frac{v^*(x)}{Q(x)} \right) \right]+
r_2 r(x) v^*(x) \left( 1-\frac{v^*(x)}{K_2(x)} \right) =0,\;x\in\Omega,~~~
\frac{\partial (\frac{v^*}{Q})}{\partial n}=0,\;x\in\partial\Omega,
\label{semi_v}
\end{equation}
respectively.

\begin{Lemma} \cite{Korobenko2014}
\label{lem0}
If $P$ is proportional to $K_1$ then $u^*(x)=K_1$ is the only solution of \eqref{semi_u}. Problem \eqref{semi_v}
has a unique solution $v^*$ which is positive on $\overline{\Omega}$.
\end{Lemma}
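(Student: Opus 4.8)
The plan is to reduce both assertions of Lemma~\ref{lem0} to a single existence-and-uniqueness statement for the scalar weighted logistic equation and then to exploit the algebraic structure $P\propto K_1$ for the first claim. Note that \eqref{semi_u} and \eqref{semi_v} both have the form
\[
\nabla\cdot\left[c(x)\nabla\left(\frac{w}{S}\right)\right]+\rho(x)\,w\left(1-\frac{w}{\kappa(x)}\right)=0 \ \text{in }\Omega,\qquad \frac{\partial(w/S)}{\partial n}=0 \ \text{on }\partial\Omega,
\]
with $(c,S,\rho,\kappa)=(a,P,r_1r,K_1)$ for \eqref{semi_u} and $(c,S,\rho,\kappa)=(b,Q,r_2r,K_2)$ for \eqref{semi_v}; here $c,S,\kappa>0$ on $\overline{\Omega}$, while $\rho\ge 0$ and $\rho>0$ on an open nonempty subset. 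First I would substitute $\psi=w/S$, which converts the flux condition into the homogeneous Neumann condition $\partial\psi/\partial n=0$ and yields $\nabla\cdot[c\nabla\psi]+\rho S\,\psi(1-S\psi/\kappa)=0$, whose per-capita reaction $\rho S\,(1-S\psi/\kappa)$ is strictly decreasing in $\psi$ wherever $\rho>0$.

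For existence I would argue by sub- and supersolutions in the $\psi$-variable. The constant $\overline{\psi}\equiv M$ with $M=\max_{\overline{\Omega}}(\kappa/S)$ is a supersolution, since its diffusion term vanishes and its reaction is nonpositive. For a subsolution, let $\phi_1>0$ be the principal eigenfunction of the Neumann problem $\nabla\cdot[c\nabla\phi]+\rho S\,\phi=\lambda\phi$; testing the associated Rayleigh quotient with the constant $1$ gives $\lambda_1\ge |\Omega|^{-1}\int_\Omega\rho S>0$, so for $\delta>0$ small $\underline{\psi}=\delta\phi_1$ satisfies $\nabla\cdot[c\nabla\underline{\psi}]+\rho S\,\underline{\psi}(1-S\underline{\psi}/\kappa)=\delta\phi_1(\lambda_1-\rho S^2\delta\phi_1/\kappa)\ge 0$ and $\underline{\psi}\le\overline{\psi}$. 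Monotone iteration between $\underline{\psi}$ and $\overline{\psi}$ then produces a solution $\psi^*$ with $0<\underline{\psi}\le\psi^*\le\overline{\psi}$, and $w^*=S\psi^*>0$ solves the original problem.

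For uniqueness I would use the Brezis--Oswald / Picone argument adapted to the weighted operator. Given two positive solutions $\psi_1,\psi_2$, the Picone identity
\[
\int_\Omega c\,\left|\nabla\psi_1-\frac{\psi_1}{\psi_2}\nabla\psi_2\right|^2=\int_\Omega c\,|\nabla\psi_1|^2-\int_\Omega c\,\nabla\!\left(\frac{\psi_1^2}{\psi_2}\right)\cdot\nabla\psi_2\ \ge 0,
\]
combined with the two equations, forces the quantity $\int_\Omega \rho S\,(S/\kappa)(\psi_2-\psi_1)(\psi_1^2-\psi_2^2)$ to be simultaneously $\le 0$ and equal to a nonnegative Picone remainder; hence both vanish. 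Equality in Picone gives $\nabla(\psi_1/\psi_2)\equiv 0$ on the connected domain $\Omega$, so $\psi_1=\Lambda\psi_2$ for a constant $\Lambda>0$; substituting back and using $\rho>0$ on an open set pins down $\Lambda=1$. This shows \eqref{semi_v} has a unique positive solution $v^*$, which is the second assertion. For the first assertion, when $P=cK_1$ the quotient $u^*/P=1/c$ is constant, so both the diffusion term and the factor $1-u^*/K_1$ vanish, whence $u^*\equiv K_1$ solves \eqref{semi_u}; by the uniqueness just established it is the only positive solution.

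The single delicate point, which I would flag as the main obstacle, is the degeneracy permitted by the hypotheses: $r$ (hence $\rho$) may vanish on part of $\Omega$, so the per-capita growth is only \emph{strictly} decreasing on $\{\rho>0\}$, and a direct strict-maximum-principle uniqueness does not close. The Picone step is precisely what circumvents this, since the gradient identity is insensitive to where $\rho$ vanishes and already delivers the proportionality $\psi_1=\Lambda\psi_2$; only afterward is the open set $\{\rho>0\}$ invoked, and merely to fix the scalar $\Lambda$. The remaining care is purely in transplanting the sub/supersolution construction to the nonstandard diffusion, which is handled by working in the variable $\psi=w/S$ so that constants and the principal eigenfunction play their classical Neumann roles.
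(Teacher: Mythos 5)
Your proposal addresses a statement that the paper itself never proves: Lemma~\ref{lem0} is imported by citation from \cite{Korobenko2014}, so there is no in-paper argument to compare against, only the standard treatment in the cited literature. Judged on its own, your proof is correct and complete in all essentials, and it runs on the same machinery that the paper invokes elsewhere (the substitution $\psi=w/S$ turning the carrying-capacity-driven flux condition into a homogeneous Neumann condition, sub/supersolutions and monotone iteration in the style of \cite{Pao}, and the variational characterization of the principal eigenvalue from \cite{CC}); your supersolution $\max_{\overline\Omega}(\kappa/S)$, your eigenfunction subsolution with $\lambda_1\ge|\Omega|^{-1}\int_\Omega \rho S>0$, and the Picone computation all check out, including the key cancellation that the sum of the two Picone remainders equals $-\int_\Omega \rho (S^2/\kappa)(\psi_1-\psi_2)^2(\psi_1+\psi_2)\le 0$. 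Where you genuinely depart from the route in \cite{Korobenko2014} and \cite{CC} (Propositions 3.2--3.3, which the present paper cites when it needs bounds on $v^*$) is the uniqueness step: there it is obtained through comparison/monotone-dynamical-systems arguments exploiting the strict sublinearity of the logistic nonlinearity, which simultaneously yields global attractivity of the steady state (a fact the paper later uses, e.g.\ in Lemma~\ref{lem1}); your Picone--Brezis--Oswald argument is more elementary, needs no parabolic theory, and, as you correctly flag, is insensitive to the degeneracy $r\ge 0$ since the set $\{r>0\}$ enters only to fix the constant $\Lambda=1$ --- but it delivers uniqueness of the equilibrium only, not attractivity. One small repair: since $w\equiv 0$ always solves both \eqref{semi_u} and \eqref{semi_v}, the lemma's ``only solution'' must be read as ``only nontrivial nonnegative solution,'' and your Picone step divides by $\psi_2$, so you should insert the standard remark that, by the strong maximum principle and Harnack inequality, any nontrivial nonnegative solution is strictly positive on $\overline\Omega$; with that sentence added, your argument fully establishes the statement.
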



{\bf 

For \eqref{eq:main_problem}, a solution can either tend to one of the semi-trivial equilibria,
when one of the two species is brought to extinction, or both $u$ and $v$ can persist.
In the persistence case, we will say that $u$ and $v$ strongly persist. 
Numerical simulations illustrate that in this case,
for any initial conditions, the solution converges to a certain coexistence equilibrium. 
Under harvesting of both $u$ and $v$, extinction of both species can occur, but this scenario will be described 
later. 
}

The two main results of the present paper are the following.

\begin{Th}
\label{theorem_harv1}
Let $P$ be proportional to  $K$, while $\displaystyle \nabla \cdot \left[ b(x) \nabla \left( \frac{K}{Q} \right) \right] \not\equiv 0$  on $\Omega$, $\alpha,\beta \in [0,1)$.
Then, the following three scenarios can occur. 
\begin{enumerate}
\item
For any $\beta \in [0,1)$ and $\alpha \leq \beta$, all solutions of \eqref{eq:main_problem} 
converge 
to $((1-\alpha)K,0)$.
\item
For any $\beta<1$, there exists $\alpha_2 \in (\beta,1)$ such that whenever $\alpha\in(\alpha_2,1)$,
all solutions of \eqref{eq:main_problem} converge to $(0,v_{\beta}^*)$, where
$v_{\beta}^*$ satisfies \eqref{semi_v} with $K_2=(1-\beta)K$, $r_2=1-\beta$.
\end{enumerate}
\item
{\bf
For any $\beta \in [0,1)$, 		
there exists $\alpha_1 \in (\beta, 1)$ satisfying 
\begin{equation}
\label{alpha_star}
\alpha_1 \geq \alpha^*= 1- \frac{\int_{\Omega} r v_{\beta}^*~dx} {\int_{\Omega} r K~dx},
\end{equation}
where
$v_{\beta}^*$ is a solution of \eqref{semi_v} with $K_2=(1-\beta)K$, $r_2=1-\beta$,
such that for any 
$\alpha \in (\beta,\alpha_1)$, all solutions of \eqref{eq:main_problem}
strongly persist.
} 
\end{Th}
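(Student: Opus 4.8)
The plan is to pass to the non-harvested model \eqref{eq:main_modified} and then run the standard invasibility analysis for monotone competitive systems. The algebraic identity
\[
r(x)u\Bigl(1-\tfrac{u+v}{K}\Bigr)-\alpha r(x)u=(1-\alpha)\,r(x)u\Bigl(1-\tfrac{u+v}{(1-\alpha)K}\Bigr)
\]
shows that \eqref{eq:main_problem} is exactly \eqref{eq:main_modified} with $r_1=1-\alpha$, $K_1=(1-\alpha)K$, $r_2=1-\beta$, $K_2=(1-\beta)K$. Since $P\propto K$ implies $P\propto K_1$, Lemma~\ref{lem0} gives the semi-trivial equilibrium $u^{*}=K_1=(1-\alpha)K$ together with the unique positive solution $v_{\beta}^{*}$ of \eqref{semi_v}. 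The reduced system is competitive, so I would obtain strong persistence by proving that both semi-trivial equilibria $((1-\alpha)K,0)$ and $(0,v_{\beta}^{*})$ are linearly unstable (each resident admits invasion by the absent species) and then invoking the persistence theory for monotone systems cited in the introduction.

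Next I would set up the two invasion eigenvalue problems. Linearizing the $v$-equation at $((1-\alpha)K,0)$ produces $\nabla\cdot[b\nabla(\cdot/Q)]$ perturbed by the zeroth-order term $r_2 r\bigl(1-u^{*}/K_2\bigr)=(1-\beta)r\bigl(1-\tfrac{1-\alpha}{1-\beta}\bigr)=(\alpha-\beta)r$. As $\nabla\cdot[b\nabla(\cdot/Q)]$ has principal eigenvalue $0$ (eigenfunction $Q$) and the principal eigenvalue is strictly increasing in the potential, its invasion eigenvalue $\lambda_1$ is positive exactly when $\alpha>\beta$. Linearizing the $u$-equation at $(0,v_{\beta}^{*})$ gives $\nabla\cdot[a\nabla(\cdot/P)]$ with potential $m=r\bigl[(1-\alpha)-v_{\beta}^{*}/K\bigr]$; call its principal eigenvalue $\mu_1(\alpha)$. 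Since $m$ is strictly decreasing in $\alpha$ on $\{r>0\}$, $\mu_1$ is continuous and strictly decreasing, hence has a single sign change, which I take to define $\alpha_1$. Part (2) of the theorem (that $u$ is driven extinct for $\alpha$ near $1$) forces $\mu_1<0$ there, so $\alpha_1<1$.

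The quantitative core is the bound $\alpha_1\ge\alpha^{*}$. Substituting $\varphi=Pw$ converts the $u$-invasion problem into the self-adjoint weighted problem $\nabla\cdot[a\nabla w]+mPw=\mu Pw$ with $\partial w/\partial n=0$, whose principal eigenvalue satisfies
\[
\mu_1(\alpha)=\sup_{w\in H^{1}(\Omega),\,w\not\equiv 0}\frac{-\int_{\Omega}a\,|\nabla w|^{2}\,dx+\int_{\Omega}m\,P\,w^{2}\,dx}{\int_{\Omega}P\,w^{2}\,dx}.
\]
Testing with $w\equiv 1$ and using $P\propto K$ gives $\mu_1(\alpha)\ge \frac{\int_{\Omega}mK\,dx}{\int_{\Omega}K\,dx}=\frac{(1-\alpha)\int_{\Omega}rK\,dx-\int_{\Omega}r v_{\beta}^{*}\,dx}{\int_{\Omega}K\,dx}$, which is positive precisely for $\alpha<\alpha^{*}$ with $\alpha^{*}$ as in \eqref{alpha_star}; hence $\alpha_1\ge\alpha^{*}$.

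Finally I would verify that $(\beta,\alpha_1)$ is nonempty by showing $\alpha^{*}>\beta$, i.e. $\int_{\Omega}r(K_2-v_{\beta}^{*})\,dx>0$. Integrating \eqref{semi_v} (with $K_2,r_2$) over $\Omega$ and using the no-flux condition gives $\int_{\Omega}\frac{r v_{\beta}^{*}}{K_2}(K_2-v_{\beta}^{*})\,dx=0$; combined with $K_2(K_2-v_{\beta}^{*})=(K_2-v_{\beta}^{*})^{2}+v_{\beta}^{*}(K_2-v_{\beta}^{*})$ this yields the identity $\int_{\Omega}r(K_2-v_{\beta}^{*})\,dx=\int_{\Omega}\tfrac{r}{K_2}(K_2-v_{\beta}^{*})^{2}\,dx\ge 0$, strict because $v_{\beta}^{*}\not\equiv K_2$ (equality would force $\nabla\cdot[b\nabla(K/Q)]\equiv 0$ where $r>0$, contrary to hypothesis). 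Thus $\beta<\alpha^{*}\le\alpha_1<1$, and for every $\alpha\in(\beta,\alpha_1)$ both $\lambda_1>0$ and $\mu_1(\alpha)>0$, so both semi-trivial states are unstable and strong persistence follows. I expect the main obstacle to lie not in the eigenvalue estimate but in two supporting points: the rigorous passage from instability of both semi-trivial equilibria to genuine strong persistence via the abstract monotone-systems machinery, and the strictness in $\alpha^{*}>\beta$, which needs a unique-continuation/connectedness argument to exclude $v_{\beta}^{*}\equiv K_2$ on the subdomain where $r>0$.
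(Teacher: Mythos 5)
Your treatment of the bold persistence claim (Part 3) is essentially the paper's own proof. You use the same reduction of \eqref{eq:main_problem} to the non-harvested system \eqref{eq:main_modified}/\eqref{eq:modified} with $r_1=1-\alpha$, $K_1=(1-\alpha)K$; the same two invasion eigenvalue problems; the same Rayleigh-quotient bound with the test function $\psi=K$ (your $w\equiv1$ after the substitution $\psi=Pw$), which yields exactly \eqref{alpha_star}; and the same final appeal to the monotone-systems machinery (the paper's Lemma~\ref{lemmaHsu}, which also requires $(0,0)$ to be a repeller, Lemma~\ref{lem3}). The two ingredients you prove inline are handled by citation in the paper: instability of $((1-\alpha)K,0)$ for $\alpha>\beta$ is Lemma~\ref{lem2} (from \cite{BKK2015}), and your integral identity giving $\int_\Omega r\bigl(K_2-v_\beta^*\bigr)\,dx>0$ is precisely Lemma~\ref{Lsteady2} (from \cite{CAMWA2016}); your concern about strictness when $v_\beta^*\equiv K_2$ on $\{r>0\}$ is the honest version of what that citation conceals. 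Up to here the proposal is sound and structurally identical to the paper.

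The genuine gap is that the theorem has three parts and you establish only one. Part 1 is a \emph{global} convergence claim; your linearization at $((1-\alpha)K,0)$ only shows the invasion eigenvalue is nonpositive for $\alpha\le\beta$, and local (indeed neutral, at $\alpha=\beta$) stability does not give convergence of all solutions --- the paper obtains this from Lemma~\ref{lem1}, a global asymptotic stability result cited from \cite{BKK2015}. More seriously, Part 2 is not proven but \emph{used}: you invoke ``Part (2) of the theorem'' to force $\mu_1<0$ near $\alpha=1$. Part 2 cannot be extracted from eigenvalue analysis alone: in a monotone competitive system, linear stability of $(0,v_\beta^*)$ together with instability of $((1-\alpha)K,0)$ does not imply global convergence to $(0,v_\beta^*)$ unless coexistence equilibria are ruled out. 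That exclusion is the paper's Lemma~\ref{lem5}, a separate upper/lower-solution comparison argument (with $\overline u=bK/(2M)$ and $\underline v=v^*/2$) valid for small $c$, which combined with the trichotomy of Lemma~\ref{lemmaHsu} gives Proposition~\ref{theorem2}; nothing in your proposal plays this role. The dependence of your Part 3 argument on Part 2 is easily repaired without it --- as $\alpha\to1$ the potential $r\bigl[(1-\alpha)-v_\beta^*/K\bigr]$ becomes nonpositive and $\not\equiv0$, so $\mu_1(\alpha)<0$ directly --- but Parts 1 and 2 themselves still require the missing non-coexistence argument.
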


\begin{Rem}
Theorem~\ref{theorem_harv1} outlines the two stages of the influence of harvesting in the case when a harvested species chooses a better dispersal strategy:
\begin{enumerate}
\item
Any level of harvesting, whatever small it is, alleviates this advantage and first leads to coexistence.
\item
At a certain level of harvesting, depending on the parameters of equations 
(carrying capacity, intrinsic growth rate and dispersal $Q$), 
competitive exclusion of a species being harvested is inevitable.
\end{enumerate} 
If both populations are harvested, a more exploited species cannot be the only survivor. Both the lower and the upper bounds of the critical value for a harvesting effort bringing the influenced species to extinction are evaluated in the process of the proof.  
\end{Rem}

The next result refers to the case when $P,Q$ form {\em an ideal free pair}, i.e. neither $(K,0)$ nor $(0,K)$ 
is a solution of \eqref{eq:main_problem} with $\alpha=\beta=0$ as
\begin{equation}
\label{no_K}
\nabla \cdot \left[ a(x) \nabla \left( \frac{K(x)}{P(x)} \right) \right] \not\equiv 0,~~~
\nabla \cdot \left[ b(x) \nabla \left( \frac{K(x)}{Q(x)} \right) \right] \not\equiv 0, 
\end{equation}
but some positive multiple of $K$ is in the convex hull of $P$ and $Q$, i.e. there exist $\gamma>0$ and $\delta>0$ such that 
\begin{equation}
\label{hull}
K(x)=\gamma P(x)+ \delta Q(x), \quad x \in \Omega.
\end{equation}
Equality \eqref{hull} yields that $(\gamma P, \delta Q)$ is a solution of \eqref{eq:main_problem}.

\begin{Th}
\label{theorem_harv2}
Let $P$, $Q$ and $K$ satisfy \eqref{no_K},  \eqref{hull} hold for some $\gamma>0$, $\delta>0$ and 
$\alpha,\beta \in [0,1)$. 

\begin{enumerate}
\item
{\bf 
For any $\beta \in (0,1)$, 		
there exist $\alpha_1 \in [0,\beta)$ and $\alpha_2\in (\beta, 1)$ such that for any 
$\alpha \in (\alpha_1,\alpha_2)$, all solutions of \eqref{eq:main_problem} strongly persist.
}
\item
For any $\beta \in [0,1)$, there exists $\alpha_3 \in (\beta,1)$ such that whenever $\alpha\in(\alpha_3,1)$,
all solutions of \eqref{eq:main_problem} converge to $(0,v_{\beta}^*)$, where
$v_{\beta}^*$ satisfies \eqref{semi_v} with $K_2=(1-\beta)K$, $r_2=1-\beta$.
\item
There exists $\beta_1 \in (0,1)$ such that for any $\beta \in (\beta_1,1)$ there is an $\alpha_0(\beta) \in [0,\beta)$
such that for any $\alpha \in [0,\alpha_0)$,  all solutions of \eqref{eq:main_problem} converge to $(u_{\alpha}^*,0)$, where
$u_{\alpha}^*$ satisfies \eqref{semi_u} with $K_1=(1-\alpha)K$, $r_1=1-\alpha$.
\end{enumerate}
\end{Th}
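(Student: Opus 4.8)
The plan is to reduce \eqref{eq:main_problem} to the non-harvested form \eqref{eq:main_modified} and then run an invasion analysis inside the theory of monotone competitive systems. Factoring the reactions as $ru(1-(u+v)/K)-\alpha ru=(1-\alpha)\,ru\,(1-(u+v)/((1-\alpha)K))$, and similarly for $v$, shows that \eqref{eq:main_problem} coincides with \eqref{eq:main_modified} for $r_1=1-\alpha$, $K_1=(1-\alpha)K$, $r_2=1-\beta$, $K_2=(1-\beta)K$. In particular, at $\alpha=\beta$ we get $K_1=K_2=(1-\beta)K=(1-\beta)\gamma P+(1-\beta)\delta Q$, so \eqref{hull} is preserved after rescaling and $((1-\beta)\gamma P,(1-\beta)\delta Q)$ is a coexistence equilibrium. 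The system is competitive and generates a strongly monotone semiflow for the order $(u_1,v_1)\preceq(u_2,v_2)\iff u_1\le u_2,\ v_1\ge v_2$, so its asymptotics are governed by the two semi-trivial equilibria $(u^*_\alpha,0)$ and $(0,v^*_\beta)$ (with $u^*_\alpha$, $v^*_\beta$ solving \eqref{semi_u}, \eqref{semi_v} for the updated parameters). I would encode stability through the invasion principal eigenvalues $\lambda^v(\alpha,\beta)$ of $\nabla\cdot[b\nabla(\cdot/Q)]+r(\cdot)\big((1-\beta)-u^*_\alpha/K\big)$ and $\lambda^u(\alpha,\beta)$ of $\nabla\cdot[a\nabla(\cdot/P)]+r(\cdot)\big((1-\alpha)-v^*_\beta/K\big)$; a semi-trivial state is unstable precisely when its eigenvalue is positive.

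Next I would record two tools. By the comparison principle $u^*_\alpha$ decreases in $\alpha$ and $v^*_\beta$ decreases in $\beta$, so the potential of $\lambda^v$ increases and that of $\lambda^u$ decreases in $\alpha$; hence $\lambda^v(\cdot,\beta)$ is strictly increasing and $\lambda^u(\cdot,\beta)$ strictly decreasing in $\alpha$. Substituting $\varphi=Qw$ yields
\[
\lambda^v(\alpha,\beta)=\sup_{w\not\equiv 0}\frac{-\int_\Omega b\,|\nabla w|^2\,dx+\int_\Omega rQ\big((1-\beta)-u^*_\alpha/K\big)w^2\,dx}{\int_\Omega Q\,w^2\,dx},
\]
and analogously for $\lambda^u$ with weight $P$; the test function $w\equiv1$ gives the criterion $\int_\Omega rQ\big((1-\beta)-u^*_\alpha/K\big)\,dx>0\Rightarrow\lambda^v>0$, while a potential that is $\le0$ (strictly on a set of positive measure) forces $\lambda^v<0$ — the same device behind the explicit bound \eqref{alpha_star} in Theorem~\ref{theorem_harv1}. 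For part~1, at $\alpha=\beta$ the rescaled ideal free coexistence $((1-\beta)\gamma P,(1-\beta)\delta Q)$ is globally attractive by the ideal free pair results of \cite{Averill,CAMWA2016}; a globally attractive interior equilibrium of a strongly monotone competitive system cannot coexist with an attracting semi-trivial state, so both $(u^*_\beta,0)$ and $(0,v^*_\beta)$ are non-attracting, giving $\lambda^v(\beta,\beta)>0$ and $\lambda^u(\beta,\beta)>0$. By continuous dependence of $u^*_\alpha$, $v^*_\beta$ and the eigenvalues on $\alpha$, together with the monotonicity above, both stay positive on an open interval $(\alpha_1,\alpha_2)\ni\beta$ with $\alpha_1\in[0,\beta)$, $\alpha_2\in(\beta,1)$; on it both semi-trivial states are unstable, and uniform (strong) persistence follows from persistence theory for the monotone semiflow, since $\{(0,0),(u^*_\alpha,0),(0,v^*_\beta)\}$ forms an acyclic boundary covering, each element repelling the interior.

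Parts~2 and 3 then follow from the boundary behaviour in $\alpha$ and $\beta$. As $\alpha\to1^-$ we have $u^*_\alpha\to0$, so the potential of $\lambda^v$ tends to $r(1-\beta)>0$ and $\lambda^v>0$, while $\lambda^u$ tends to the eigenvalue with potential $-rv^*_\beta/K<0$ and $\lambda^u<0$; since $\lambda^u(\beta,\beta)>0$ and $\lambda^u(\cdot,\beta)$ is strictly decreasing, there is a threshold $\alpha_3\in(\beta,1)$ beyond which $(u^*_\alpha,0)$ is unstable and $(0,v^*_\beta)$ stable, and the monotone trichotomy (as in \cite{Hsu,Lam2016}) gives global convergence to $(0,v^*_\beta)$, which is part~2. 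For part~3, letting $\beta\to1^-$ forces $v^*_\beta\to0$, so $\lambda^u\to$ the eigenvalue with positive potential $r(1-\alpha)$ and $u$ invades, whereas $(1-\beta)-u^*_\alpha/K$ becomes negative throughout $\{r>0\}$ once $\beta$ is near $1$ and $\alpha$ small, giving $\lambda^v<0$; this produces $\beta_1\in(0,1)$ and, for $\beta>\beta_1$, a threshold $\alpha_0(\beta)\in[0,\beta)$ below which $(0,v^*_\beta)$ is unstable and $(u^*_\alpha,0)$ is globally attractive.

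The hard part will be the strictness in part~1: upgrading global attractivity at $\alpha=\beta$ to \emph{strictly} positive invasion eigenvalues, so that the coexistence interval is genuinely open rather than a single point. Global attractivity only rules out nonlinear stability of the semi-trivial states, hence yields $\lambda^u(\beta,\beta),\lambda^v(\beta,\beta)\ge0$, and the neutral case (a zero eigenvalue with non-hyperbolic semi-trivial state) must be excluded. The potentials $(1-\beta)-u^*_\beta/K$ and $(1-\beta)-v^*_\beta/K$ are sign-indefinite, since the single-species equilibria overshoot their coexistence shares on part of $\Omega$, so the constant test function does not decide the sign. I would resolve this by exploiting the explicit coexistence profile, using the equation for $u^*_\beta$ together with the identity $u^{\mathrm{co}}+v^{\mathrm{co}}=(1-\beta)K$ to build a non-constant test function $w$ making the Rayleigh quotient strictly positive, or by deriving strict instability from the strong maximum principle applied to the linearized invasion problem. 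Verifying the remaining hypotheses of the monotone-dynamics machinery — strong monotonicity, compactness and smoothing of the semiflow, and acyclicity of the boundary covering — is routine but must also be carried out.
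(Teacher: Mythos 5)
Your reduction to \eqref{eq:modified} via $c=(1-\alpha)/(1-\beta)$, $r_1=1-\alpha$, $r_2=1-\beta$, and the invasion-eigenvalue setup coincide with the paper's framework, but the proposal has two genuine gaps. The first is the one you flag yourself in Part 1, and it is fatal to your route rather than a removable technicality: global attractivity of $((1-\beta)\gamma P,(1-\beta)\delta Q)$ at $\alpha=\beta$ only yields $\lambda^u(\beta,\beta)\ge 0$ and $\lambda^v(\beta,\beta)\ge 0$, and no continuity or monotonicity argument can open an interval around $\beta$ starting from non-strict inequalities. The paper never argues through the coexistence state at all; it proves strict instability directly. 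Integrating \eqref{semi_u} (with $K_1=cK$) over $\Omega$ and splitting $K=\gamma P+\delta Q$ by \eqref{hull} gives
\[
0=-r_1\int_\Omega \frac{r}{cK}\,(u^*-cK)^2\,dx \;-\; c\gamma r_1\int_\Omega rP\left(\frac{u^*}{cK}-1\right)dx \;+\; c\delta r_1\int_\Omega rQ\left(1-\frac{u^*}{cK}\right)dx,
\]
where the first term is strictly negative by \eqref{no_K} and the second is negative by Lou's integral inequality (Lemma~\ref{lem6}, \cite{Lou}); hence $\int_\Omega rQ\left(1-\frac{u^*}{cK}\right)dx>0$, and the explicit test function $\psi=Q$ in the Rayleigh quotient makes the principal eigenvalue strictly positive (Lemma~\ref{lem7}); symmetrically, $\psi=P$ handles $(0,v^*)$ with the explicit threshold \eqref{c_ast} (Lemmas~\ref{lem8} and \ref{lem8a}). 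This is precisely the ``non-constant test function built from the coexistence profile'' you gesture at --- the test functions are $P$ and $Q$ themselves --- but the engine that makes the inequality strict is Lou's lemma, which your proposal does not have.

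The second gap, in Parts 2 and 3, you do not flag: you infer global convergence to $(0,v^*_\beta)$ (resp.\ $(u^*_\alpha,0)$) from ``this state is linearly stable and the other semi-trivial state is unstable,'' citing the trichotomy of \cite{Hsu,Lam2016}. The trichotomy does not deliver this: its first alternative is the existence of a coexistence equilibrium, and unless that alternative is excluded, stability of one boundary equilibrium together with instability of the other is perfectly compatible with, say, a stable interior equilibrium attracting an open set of initial data; multiple interior equilibria cannot be ruled out a priori in this infinite-dimensional setting. This is exactly why the paper proves Lemmas~\ref{lem9} and \ref{lem9a}: for $c$ small (resp.\ large) there is \emph{no} coexistence equilibrium at all, established via the upper solution $\overline u$ proportional to $P$, the lower solution $\underline v=v^*/2$, and an integration contradiction; only then does Lemma~\ref{lemmaHsu}, combined with instability of $(u^*,0)$ for all $c\in(0,1]$ (Lemma~\ref{lem7}), yield global convergence (Proposition~\ref{theorem2a}), after which $\alpha_3$, $\beta_1$ and $\alpha_0(\beta)$ are read off from $c=(1-\alpha)/(1-\beta)$. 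Your monotonicity observations ($u^*_\alpha$ decreasing in $\alpha$, hence monotone invasion eigenvalues) are correct and provable by sub/supersolution comparison, but without a nonexistence-of-coexistence-equilibria argument the convergence claims in Parts 2 and 3 do not follow.
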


\begin{Rem}
If two species form an ideal free pair then harvesting of any of them at a low enough rate does not change the fact of 
coexistence. However, if we fix a harvesting rate for one species high enough, the absence or low harvesting for its competitor can bring it to extinction. On the other hand, if the competitor is harvested intensively enough, the chosen species becomes the only survivor. 
\end{Rem}

\begin{Rem}
{\bf
As in Theorem~\ref{theorem_harv1}, we can present an estimate such that, say, for a fixed $\beta$ and 
\begin{equation}
\label{c_ast_est}
\alpha_2 > \alpha^{*} = 1- \frac{ \int_{\Omega} P r v_{\beta}^*/K~dx}{\int_{\Omega}  r P~dx},
\end{equation}
all solutions of \eqref{eq:main_problem} 
strongly persist.}
\end{Rem}

For completeness,  we state that over-exploitation brings both species to extinction.

\begin{Th}
\label{th_extinction}
If $0 \leq \beta < 1 \leq \alpha$ then all solutions of \eqref{eq:main_problem} with nontrivial in $v$ initial conditions 
converge to $(0,v_{\beta}^*)$, and for $0 \leq \alpha < 1 \leq \beta$, all solutions of \eqref{eq:main_problem} with nontrivial in $u$ initial conditions 
converge to $(u_{\alpha}^*,0)$. If $\alpha \geq 1$ and $\beta \geq 1$, the zero solution is a global attractor.
\end{Th}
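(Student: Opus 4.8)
The plan is to reduce the coupled system \eqref{eq:main_problem} to scalar harvested logistic equations by comparison, and then to invoke global attractivity for the scalar problems. Throughout I will use that the reaction terms are $r(x)u\left[(1-\alpha)-(u+v)/K\right]$ and $r(x)v\left[(1-\beta)-(u+v)/K\right]$, and that the Neumann-type boundary conditions make $\int_\Omega\nabla\cdot[a\nabla(\cdot/P)]\,dx=0$. I first treat $0\le\beta<1\le\alpha$. Since $v\ge0$ is preserved, the reaction for $u$ satisfies $r u[(1-\alpha)-(u+v)/K]\le r u[(1-\alpha)-u/K]$, so by the parabolic comparison principle $0\le u\le\bar u$, where $\bar u$ solves $\bar u_t=\nabla\cdot[a\nabla(\bar u/P)]+r\bar u[(1-\alpha)-\bar u/K]$ with $\bar u(0)=u_0$ and the same boundary condition. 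Writing the reaction as $(1-\alpha)r\bar u[1-\bar u/((1-\alpha)K)]$ identifies this with \eqref{semi_u} for $r_1=1-\alpha$, $K_1=(1-\alpha)K$. For $\alpha\ge1$ the effective growth $1-\alpha\le0$, so the principal eigenvalue $\lambda_1$ of $\nabla\cdot[a\nabla(\cdot/P)]+(1-\alpha)r(\cdot)$ (for which $\lambda_1=0$ corresponds to pure diffusion with eigenfunction $P$) satisfies $\lambda_1\le0$; hence the only nonnegative steady state of the scalar equation is $0$, it is globally attracting, and thus $u(t,\cdot)\to0$ uniformly on $\overline\Omega$.

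Next I squeeze $v$. From $u\ge0$, $v$ is a subsolution of $\hat v_t=\nabla\cdot[b\nabla(\hat v/Q)]+r\hat v[(1-\beta)-\hat v/K]$, i.e. \eqref{semi_v} with $r_2=1-\beta>0$, $K_2=(1-\beta)K$; since this scalar problem has the globally attracting positive equilibrium $v_\beta^*$, we get $\limsup_{t\to\infty}v\le v_\beta^*$. For the lower bound, fix $\epsilon>0$; once $u\le\epsilon$ for all large $t$, $v$ is a supersolution of the scalar equation with reaction $r v[(1-\beta)-(\epsilon+v)/K]$, whose positive equilibrium $v_{\beta,\epsilon}^*$ depends continuously on $\epsilon$ and tends to $v_\beta^*$ as $\epsilon\to0$; this gives $\liminf_{t\to\infty}v\ge v_{\beta,\epsilon}^*$, and letting $\epsilon\to0$ yields $\liminf v\ge v_\beta^*$. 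Hence $v\to v_\beta^*$, proving convergence to $(0,v_\beta^*)$. The case $0\le\alpha<1\le\beta$ is identical after interchanging $(u,a,P,\alpha)$ with $(v,b,Q,\beta)$. Finally, if $\alpha\ge1$ and $\beta\ge1$, the scalar extinction argument applies to each component separately (each dominated by its own over-harvested scalar equation), so $u\to0$ and $v\to0$, i.e. the zero solution is a global attractor.

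The main obstacle is the borderline value $\alpha=1$ (and symmetrically $\beta=1$), where $1-\alpha=0$ and the principal eigenvalue is exactly $\lambda_1=0$, so the linearization at $0$ is only neutrally stable and a purely linear comparison yields boundedness rather than decay. Here extinction must be extracted from the degenerate nonlinear absorption: integrating the scalar equation over $\Omega$ gives $\frac{d}{dt}\int_\Omega\bar u\,dx=-\int_\Omega r\bar u^2/K\,dx\le0$, so the total mass is nonincreasing and $\int_0^\infty\!\!\int_\Omega r\bar u^2/K\,dx\,dt<\infty$. Because $r$ is allowed to vanish on part of $\Omega$, concluding $\int_\Omega\bar u\,dx\to0$ requires the diffusion to keep redistributing mass into the region $\{r>0\}$ where it is consumed; this mixing (irreducibility) argument, together with parabolic regularity to upgrade $L^1$-decay to uniform decay, is the delicate point and is exactly where the auxiliary scalar attractivity result for \eqref{semi_u}--\eqref{semi_v} does the work. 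I expect that once that scalar lemma is available in the required borderline-inclusive form, the remainder of the proof is the routine comparison-and-squeeze described above.
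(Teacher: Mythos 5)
Your proposal is correct and takes essentially the same route as the paper: the paper's entire proof of Theorem~\ref{th_extinction} is the single remark that it ``can be justified, similarly to \cite{Korobenko2013}, using differential inequalities,'' and your comparison-and-squeeze argument (domination of the over-harvested species by a scalar harvested logistic equation, then the $\epsilon$-perturbed sandwich giving $v\to v_\beta^*$) is exactly that differential-inequality reduction, written out in detail. Your deferral of the borderline case $\alpha=1$ (respectively $\beta=1$) to a scalar extinction result mirrors the paper's own reliance on the scalar theory of \cite{Korobenko2013}, so it does not constitute a gap relative to the paper's standard of proof.
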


Finally, we determine the optimal harvesting policy assuming that we can control dispersal and either one or both of two similar species are appropriate for harvesting. This result generalizes the conclusion for a one-species model in \cite{Brav}. We recall that, assuming an attractive stationary solution $(u_s,v_s)$ of \eqref{eq:main_problem}, we compute Sustainable Yield as
\begin{equation*}
\label{SY}
{\rm SY} := \int_{\Omega} \alpha r(x) u_s(x)~dx.
\end{equation*}
If both species are harvested,
\begin{equation*}
{\rm SY} := \int_{\Omega} \alpha r(x) u_s(x)~dx+ \int_{\Omega} \beta r(x) v_s(x)~dx.
\end{equation*}

\begin{Th}
\label{th_MSY}
Maximum Sustainable Yield (MSY) for \eqref{eq:main_problem} is attained for $P=K$, $\alpha=0.5$, $\beta > 0.5$ and any $Q$ (or $\beta \geq 0.5$ and $\displaystyle \nabla \cdot \left[ b(x) \nabla \left( \frac{K}{Q} \right) \right] \not\equiv 0$), if only 
the first species is harvested (and the second one is subject to culling).
If both species are harvested, there is a variety of strategies $P,Q$ such that $K$ is in their convex hull 
(we choose $P$ and $Q$ such that $P+Q=K$) and $\alpha=\beta=0.5$ leading to MSY.  In both cases,
\begin{equation}
\label{MSY}
{\rm MSY} := \int_{\Omega} \frac{1}{4} r(x) K(x)~dx.
\end{equation}
\end{Th}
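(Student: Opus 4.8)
The plan is to separate a \emph{universal upper bound}, valid for every choice of dispersal operators and harvesting efforts, from an explicit \emph{attaining configuration}. First I would reuse the reduction underlying the whole paper: rewriting the reaction term of the first equation in \eqref{eq:main_problem} as
\begin{equation*}
r u\Bigl(1-\frac{u+v}{K}\Bigr)-\alpha r u=(1-\alpha)\,r u\Bigl(1-\frac{u+v}{(1-\alpha)K}\Bigr)
\end{equation*}
shows that harvesting only replaces $(r,K)$ by $((1-\alpha)r,(1-\alpha)K)$ for $u$, and by $((1-\beta)r,(1-\beta)K)$ for $v$, which is precisely the modified problem \eqref{eq:main_modified}.

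To obtain the bound, let $(u_s,v_s)$ be the attractive steady state appearing in the definition of $\mathrm{SY}$ and integrate the stationary equations over $\Omega$. The divergence terms integrate to boundary fluxes that vanish by the no-flux conditions in \eqref{eq:main_problem}, giving $\int_\Omega r u_s\bigl(1-(u_s+v_s)/K\bigr)\,dx=\alpha\int_\Omega r u_s\,dx$ and the analogous identity for $v_s$ with $\beta$. When only $u$ is harvested and $v_s\equiv0$, this reads $\mathrm{SY}=\int_\Omega r u_s(1-u_s/K)\,dx$; when both are harvested, adding the two identities and setting $w=u_s+v_s$ gives $\mathrm{SY}=\int_\Omega r w(1-w/K)\,dx$. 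In either case the integrand has the form $r\,\xi(1-\xi/K)$ with $\xi\ge0$, and since $r\ge0$ the elementary pointwise inequality $\xi(1-\xi/K)\le K/4$ (with equality at $\xi=K/2$) yields $\mathrm{SY}\le\frac14\int_\Omega rK\,dx$, the right-hand side of \eqref{MSY}. This step is short and, decisively, independent of the strategy.

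It then remains to attain $\frac14\int_\Omega rK\,dx$ and to identify when. For a single harvested species I would take $P=K$, so that by Lemma \ref{lem0} the $u$-semitrivial state is $u^*=(1-\alpha)K$, and invoke Theorem \ref{theorem_harv1}(1) (valid for $\beta\ge\alpha$ under $\nabla\cdot[b\nabla(K/Q)]\not\equiv0$, together with its variant for $Q\propto K$ requiring $\beta>\alpha$) to conclude that for $\alpha=\tfrac12$ and $\beta\ge\tfrac12$ every solution converges to $((1-\alpha)K,0)=(K/2,0)$. Then $u_s=K/2$ saturates the pointwise bound, and since here $\mathrm{SY}=\alpha(1-\alpha)\int_\Omega rK\,dx$, the factor $\alpha(1-\alpha)$ singles out $\alpha=\tfrac12$ as the unique maximizer. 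For both species harvested I would choose $P+Q=K$, so $K$ lies in the convex hull \eqref{hull} with $\gamma=\delta=1$, and set $\alpha=\beta=\tfrac12$; then $K_1=K_2=K/2$, the reaction terms vanish on $u_s=P/2$, $v_s=Q/2$, the residual divergence equations hold because $u_s/P$ and $v_s/Q$ are constant, coexistence of this profile follows from Theorem \ref{theorem_harv2}(1) (as $\alpha=\beta$ lies in the persistence window), and $u_s+v_s=K/2$ again saturates the bound, giving $\mathrm{SY}=\frac12\int_\Omega r(u_s+v_s)\,dx=\frac14\int_\Omega rK\,dx$.

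The principal obstacle is not the inequality but the \emph{convergence/attainability} half: one must certify that the exhibited profiles are indeed the global attractors entering the definition of $\mathrm{SY}$, and that equality in $\xi(1-\xi/K)\le K/4$, i.e.\ $\xi\equiv K/2$ on $\{r>0\}$, forces exactly the stated parameters. This is where the hypotheses on $\beta$ split — $\beta>\tfrac12$ for arbitrary $Q$ versus $\beta\ge\tfrac12$ under $\nabla\cdot[b\nabla(K/Q)]\not\equiv0$ — since at $\alpha=\beta=\tfrac12$ one must exclude the degenerate tie in which $v$ also disperses ideal-freely. I would therefore rely on Theorems \ref{theorem_harv1} and \ref{theorem_harv2}(1) for the convergence input, and spend the effort on these boundary cases and on verifying that in the single-harvest setting no coexistence state with $v_s>0$ can simultaneously meet $u_s\equiv K/2$, which would otherwise compete for the optimum.
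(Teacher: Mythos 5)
Your proposal is correct and follows essentially the same route as the paper's proof: integrate the stationary equation(s) over $\Omega$ so the divergence terms vanish by the no-flux boundary conditions, identify ${\rm SY}$ with $\int_\Omega r\,\xi(1-\xi/K)\,dx$ (with $\xi=u_s$, respectively $\xi=u_s+v_s$, dropping the nonnegative $v_s$ term in the single-harvest case), apply the pointwise bound $\xi(1-\xi/K)\le K/4$, and attain equality at $(K/2,0)$ for $P=K$, $\alpha=0.5$ and at $(P/2,Q/2)$ for $P+Q=K$, $\alpha=\beta=0.5$. The only notable difference is that you spend more effort on the convergence/attractivity input via Theorems~\ref{theorem_harv1} and \ref{theorem_harv2}, which the paper invokes only implicitly and tersely.
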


\section{Numerical Simulations}
\label{sec:numer}

\begin{Ex}
\label{ex1}
Consider \eqref{eq:main_problem} with $P(x) = K(x) = 2+\cos(\pi x)$, $a(x)=b(x)=Q(x) = 1$ for all $x\in(0,4)$,  $t>0$. 
Let $r(x)\equiv 1.1$,  $\alpha,\beta \in (0,1)$, and the initial conditions be $u_0=2.1=v_0$. 
The system becomes a well-known model of carrying capacity driven diffusion competing with a regularly diffusing population.
We consider the solution at the time $T=2000$ for which it is sufficiently close to a steady state. To illustrate the results 
of 
Theorem~\ref{theorem_harv1}, Parts 2 and 3, we consider solutions at $t=T$ as functions of harvesting effort $\alpha \in [0,1)$ for a fixed $\beta \in [0,1)$. We use the functions $K(x)$, $r(x)$, and the stationary solution to the second equation $v_{\beta}^*(x)$ to compute $\alpha^*$ for each fixed value of the harvesting effort $\beta$ as in \eqref{alpha_star}.
In \autoref{example_1_fig_1} we present a spatial distribution of solutions at $t=T$. Without harvesting, a regularly diffusing 
population goes extinct, while adding harvesting of $u$ at small rates leads to coexistence. 
\begin{figure}[ht]
	\centering
	\includegraphics[width=0.4\linewidth]{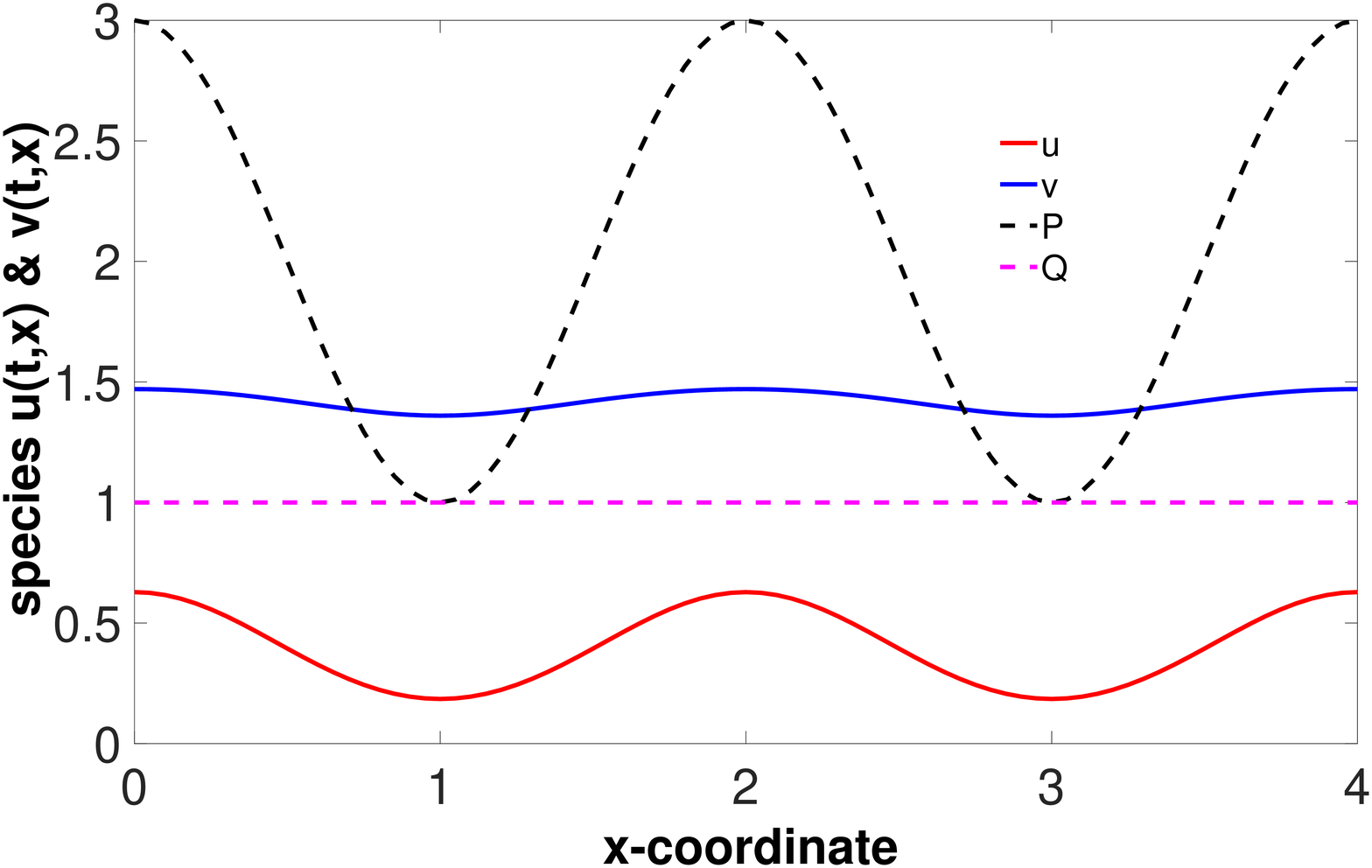}~~
	\includegraphics[width=0.4\linewidth]{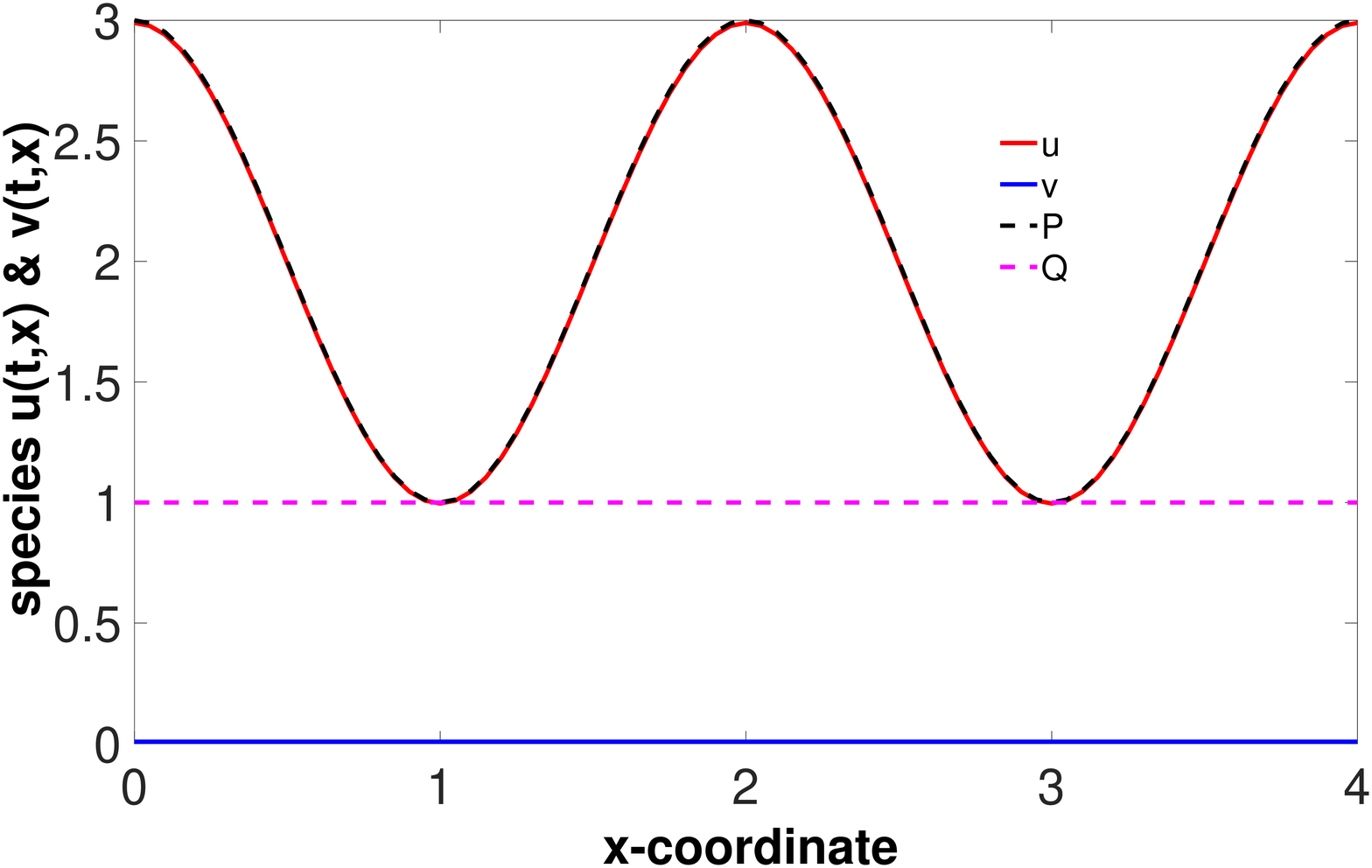}
	\caption{Solutions to \eqref{eq:main_problem} at $t= T=2000$ with $a\equiv b \equiv 1$, $Q\equiv1$, $P = K = 2+\cos(\pi x),~r(x) = 1.1, ~u_0=v_0=2.1, ~x \in (0,4)$ and (left) $\beta=0, \alpha = 0$, (right) $\beta=0, \alpha = 0.1$.} 
	\label{example_1_fig_1}
\end{figure}
\autoref{example_1_fig_2} illustrates the relation between $\alpha^*$ computed in \eqref{alpha_star} for various levels of $\beta$ (which is just a lower estimate of $\alpha$ for which coexistence necessarily occurs) and the maximal
value $\alpha^{**}$ of $\alpha_1$ when coexistence still occurs computed numerically.
\begin{figure}[ht]
\centering
\includegraphics[width=0.4\linewidth]{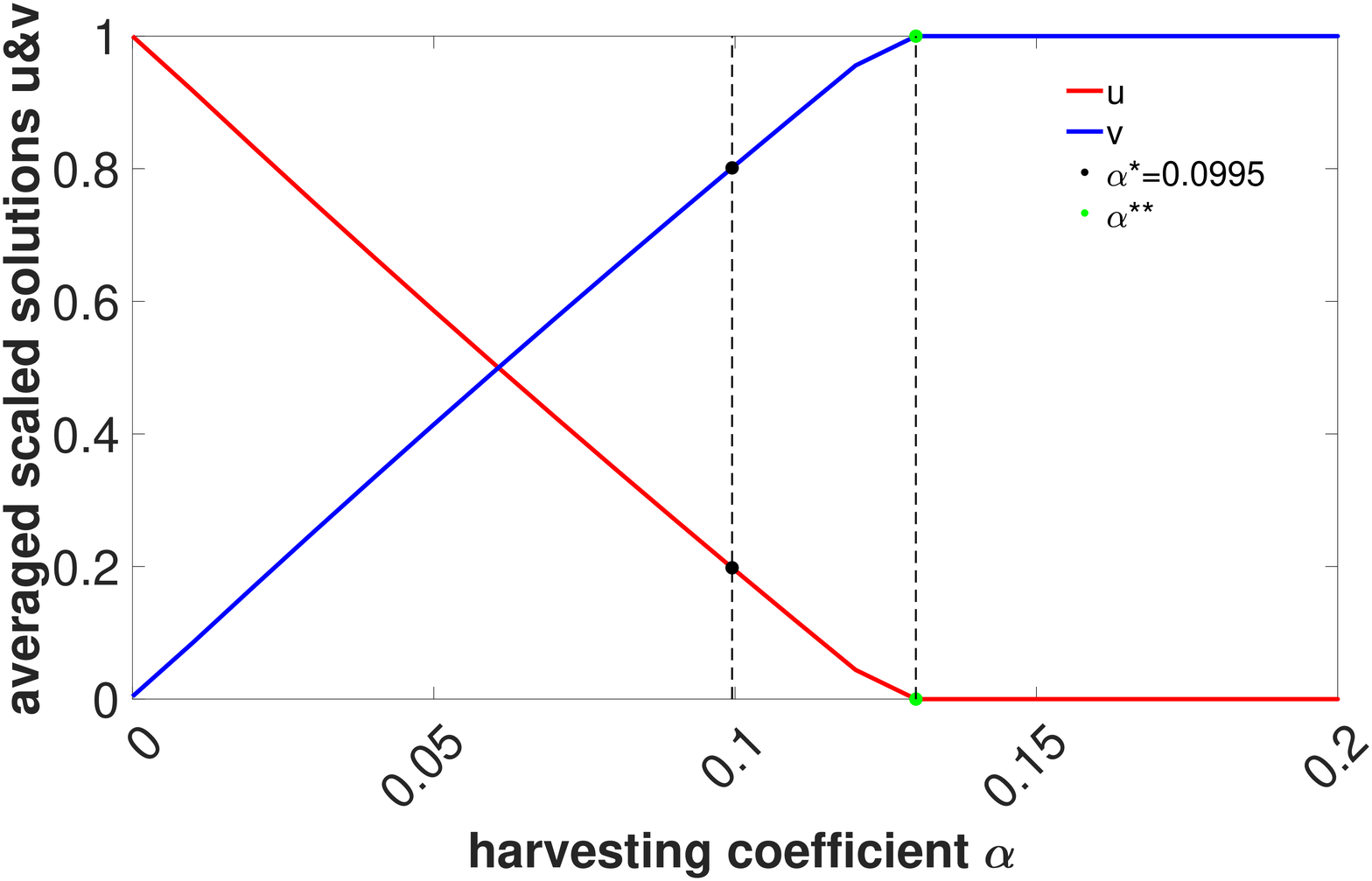}~~
\includegraphics[width=0.4\linewidth]{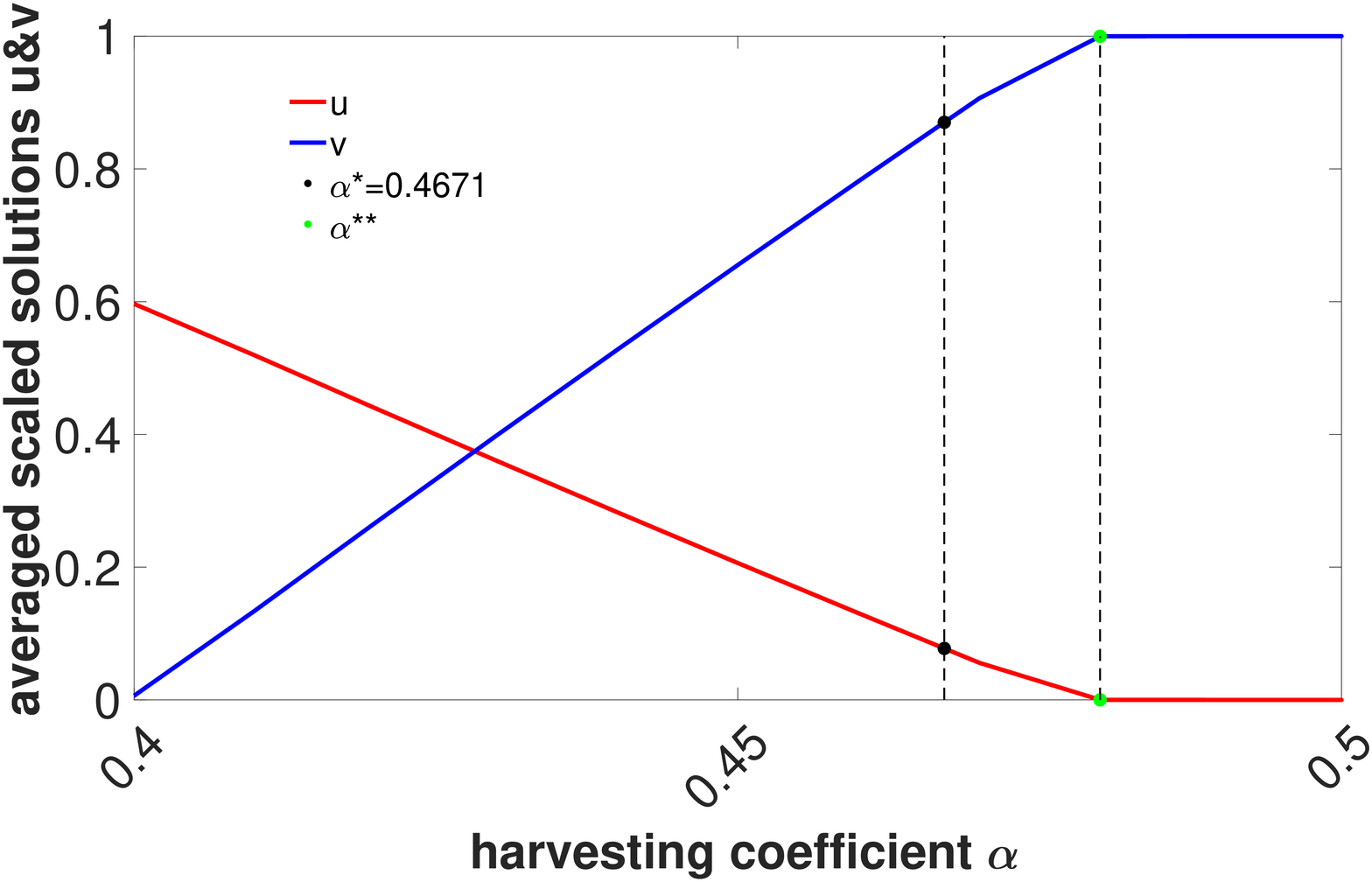}
\includegraphics[width=0.4\linewidth]{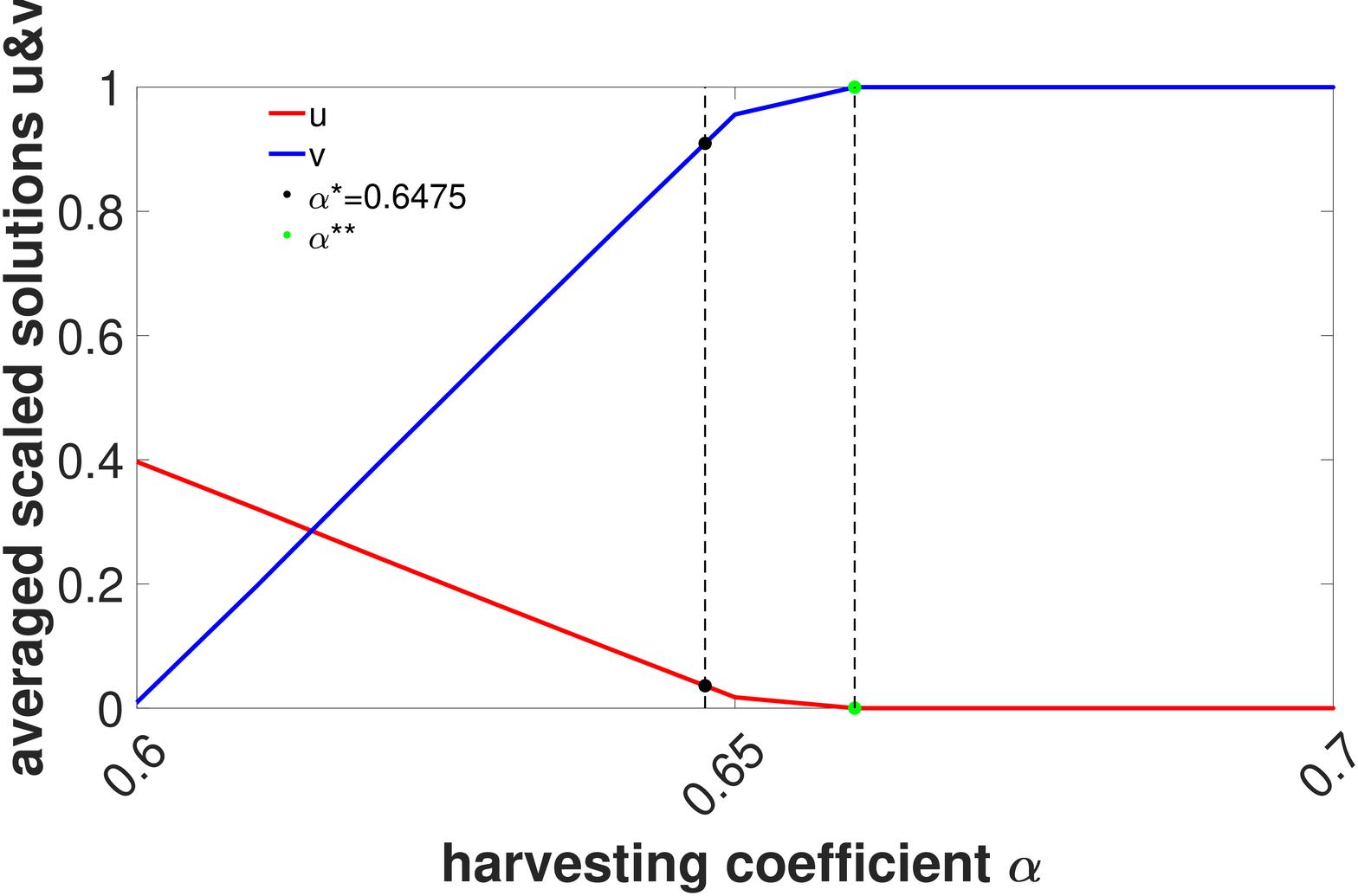}~~
\includegraphics[width=0.4\linewidth]{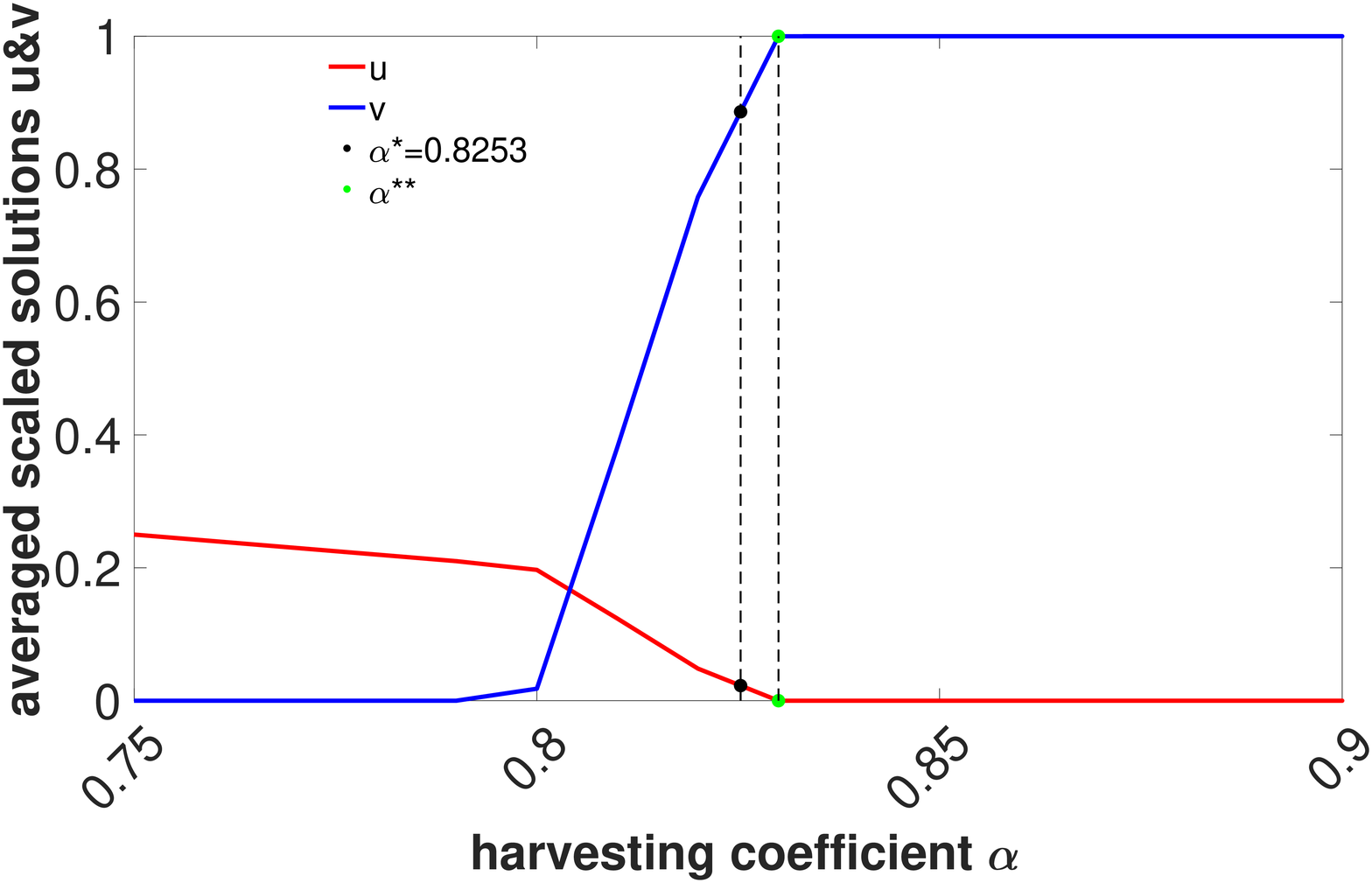}
\caption{Average solutions to \eqref{eq:main_problem} with $a\equiv b \equiv 1$, $Q\equiv1$ and $P = K = 2+\cos(\pi x),~r(x) \equiv 1.1$, $x\in(0,4)$ at  $t=T=2000$ are presented  for  $\beta=0,0.4,0.6,0.8$ (from left to right, top to bottom) with corresponding $\alpha^* = 0.0995, 0.4671, 0.6475, 0.8253$, respectively.}
\label{example_1_fig_2}
\end{figure}
\autoref{example_1_fig_3} represents dependence of the average population density on 
the harvesting rates $\alpha$ and $\beta$. 
\begin{figure}[ht]
	\centering
                \includegraphics[width=0.4\linewidth]{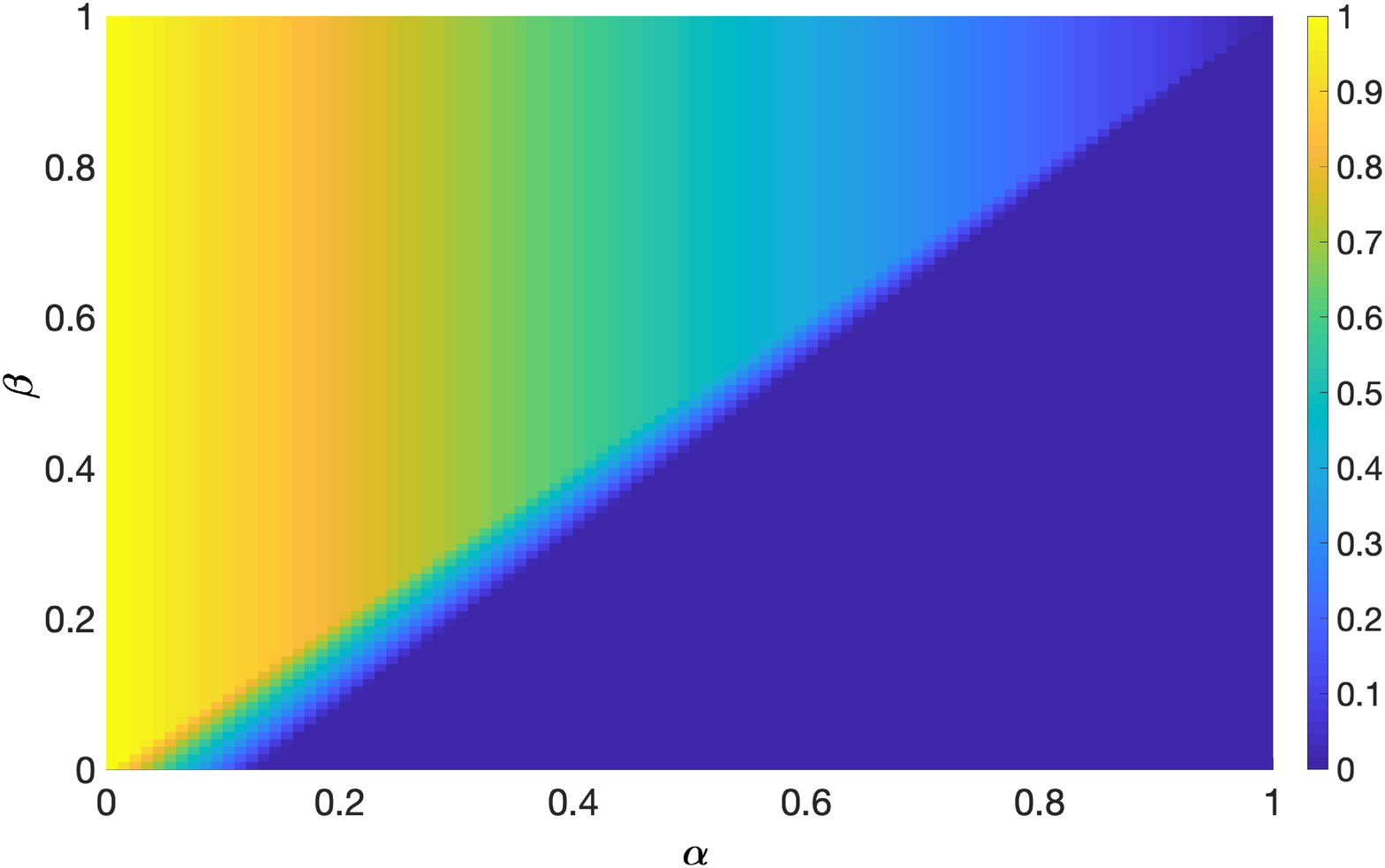}~~
                \includegraphics[width=0.4\linewidth]{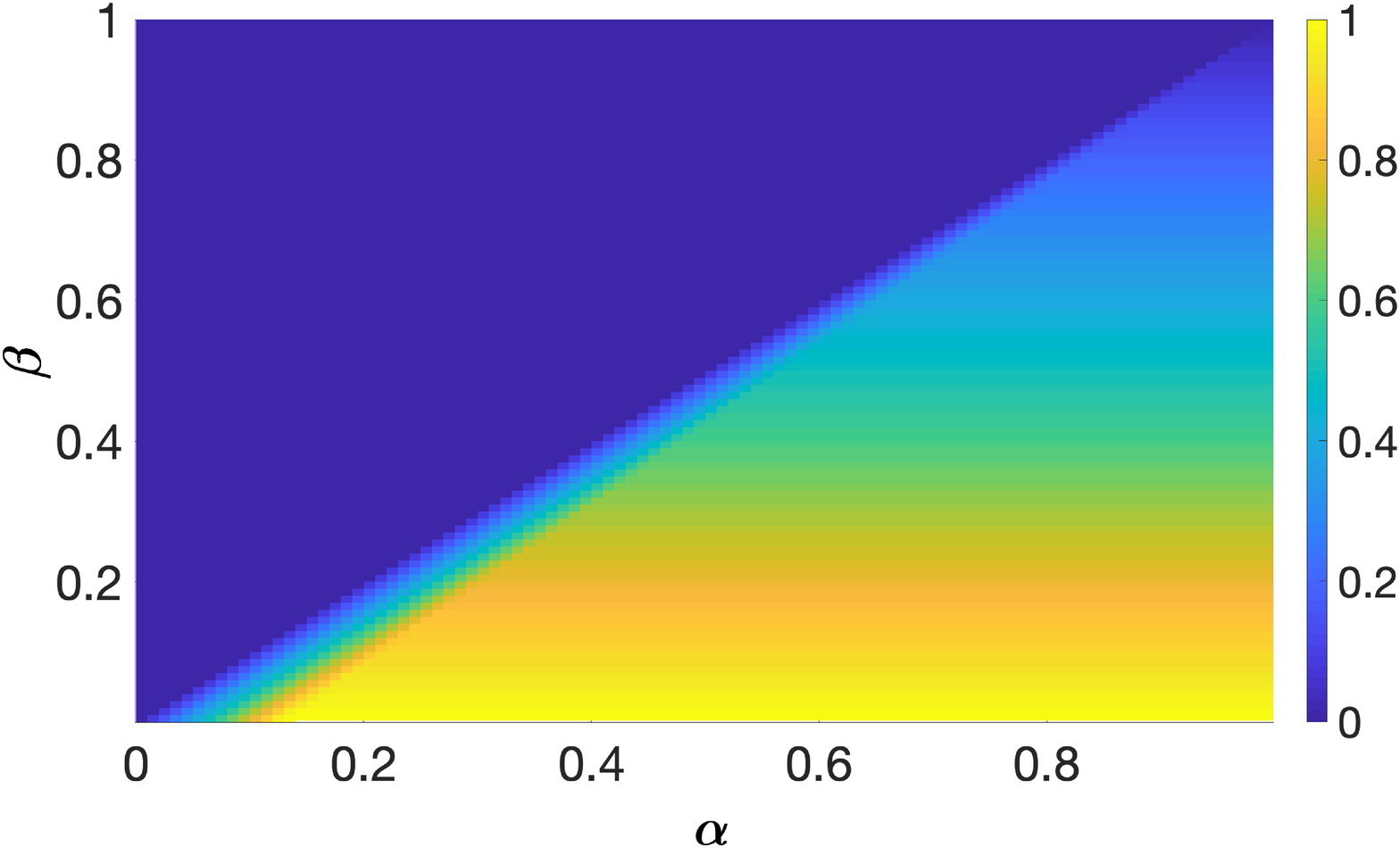}
	\caption{Average population densities 
       of $u$ (left) and $v$ (right) obtained from \eqref{eq:main_problem} for various harvesting 
rates $\alpha\in [0,1]$ and $\beta\in[0,1]$.}
	\label{example_1_fig_3}
\end{figure}
\end{Ex}

\begin{Ex}
\label{ex2}
Let us consider competition model \eqref{eq:main_problem}, 
where the carrying capacity is a combination of Gaussian functions and a positive constant $K(x) = 10e^{-12.5 
\pi^2(x-2)^2}-e^{-50 \pi^2(x-2)^2}+1$ with $P(x) \equiv K(x)$, $a(x)=b(x)=Q(x) = 1$ for all $x\in(0,4)$, $t>0$. Let 
$r(x)\equiv 1.1$,  
$\alpha,\beta \in (0,1)$, and set the initial conditions as $u_0=2.1=v_0$. 
Similarly to the previous example, without harvesting, there is a competitive exclusion of a regularly diffusing population, while harvesting of $u$ at small levels leads to coexistence, as shown in \autoref{example_2_fig_1}. 	
\begin{figure}[ht]
		\centering
		\includegraphics[width=0.4\linewidth]{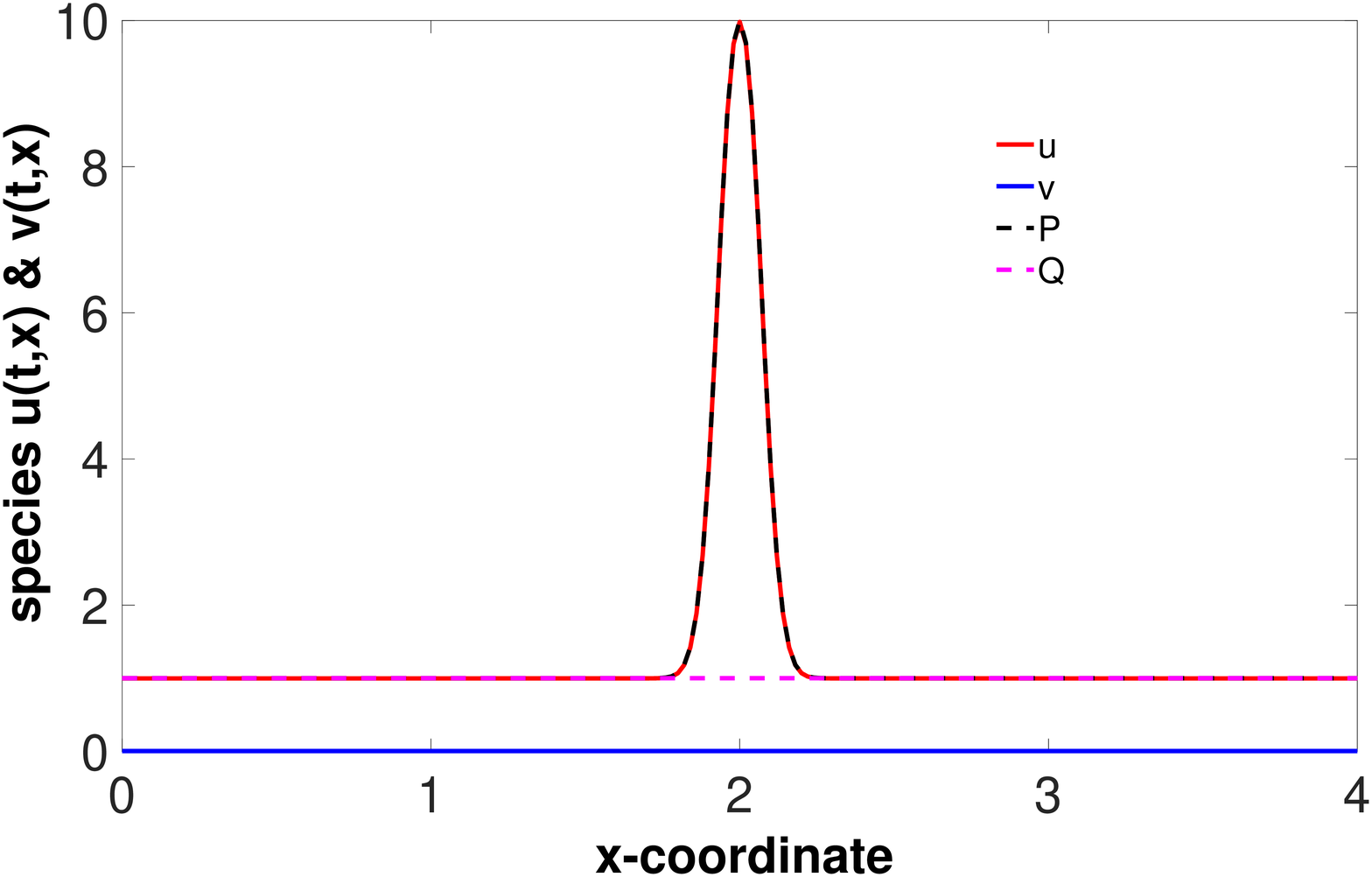}~~
			\includegraphics[width=0.4\linewidth]{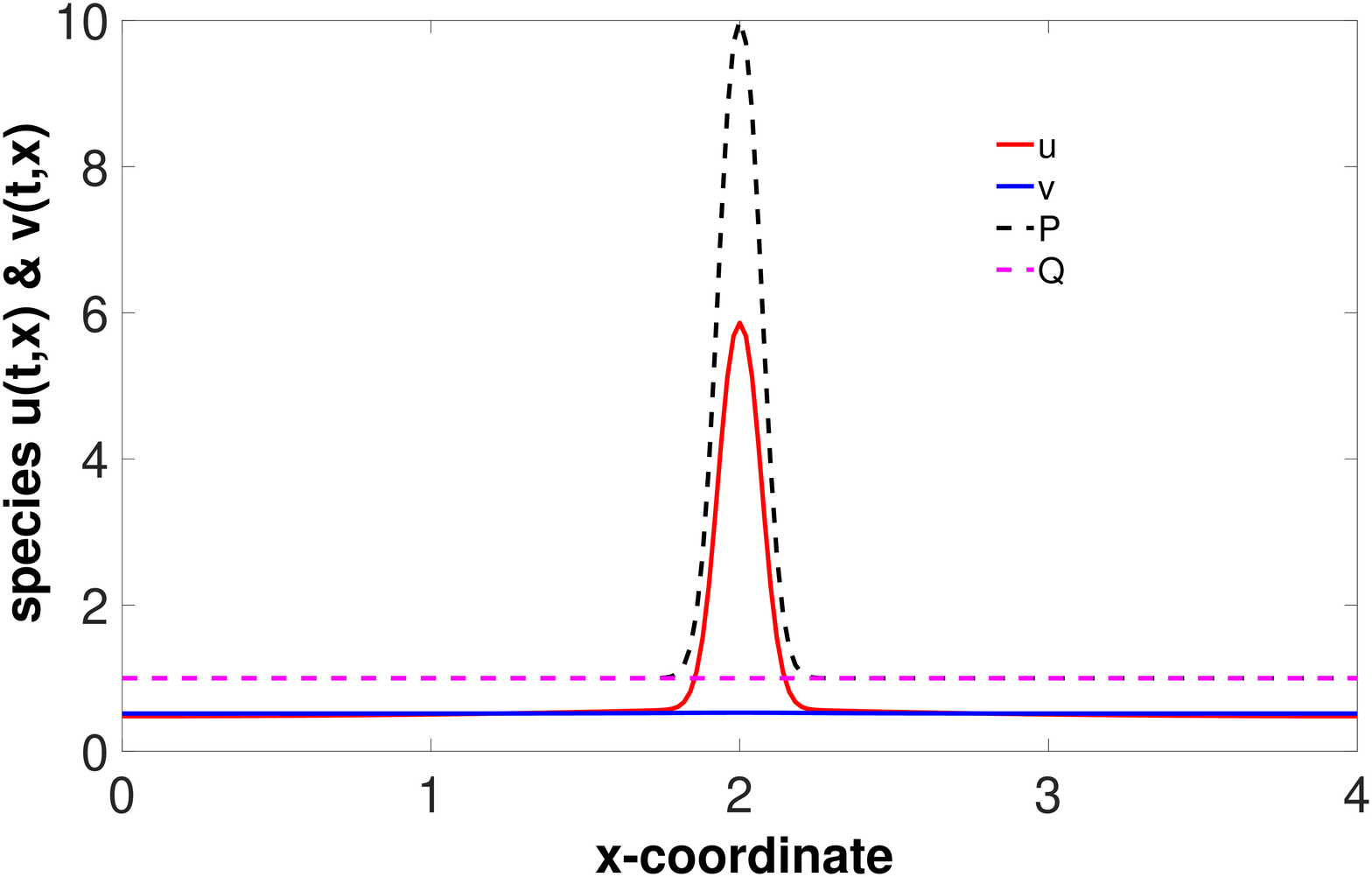}
		\caption{Spatial distributions of solutions to \eqref{eq:main_problem} at $t=T=2000$, with $P = K = 10e^{-12.5 \pi^2(x-2)^2}-e^{-50 \pi^2(x-2)^2}+1,a(x)=b(x)=Q(x) \equiv 1, u_0=v_0=2.1$, $x\in(0,4)$ and  (left) $\alpha=\beta=0$ , (right) $\beta=0$, $\alpha = 0.1$.}
		\label{example_2_fig_1}
	\end{figure}
We provide graphs of average stationary solutions as functions of $\alpha$ for fixed values 
$\beta$ (\autoref{example_2_fig_2}), using \eqref{alpha_star} to compute the upper bound $\alpha^*$.
\begin{figure}[!hb]
	\centering
		\includegraphics[width=0.4\linewidth]{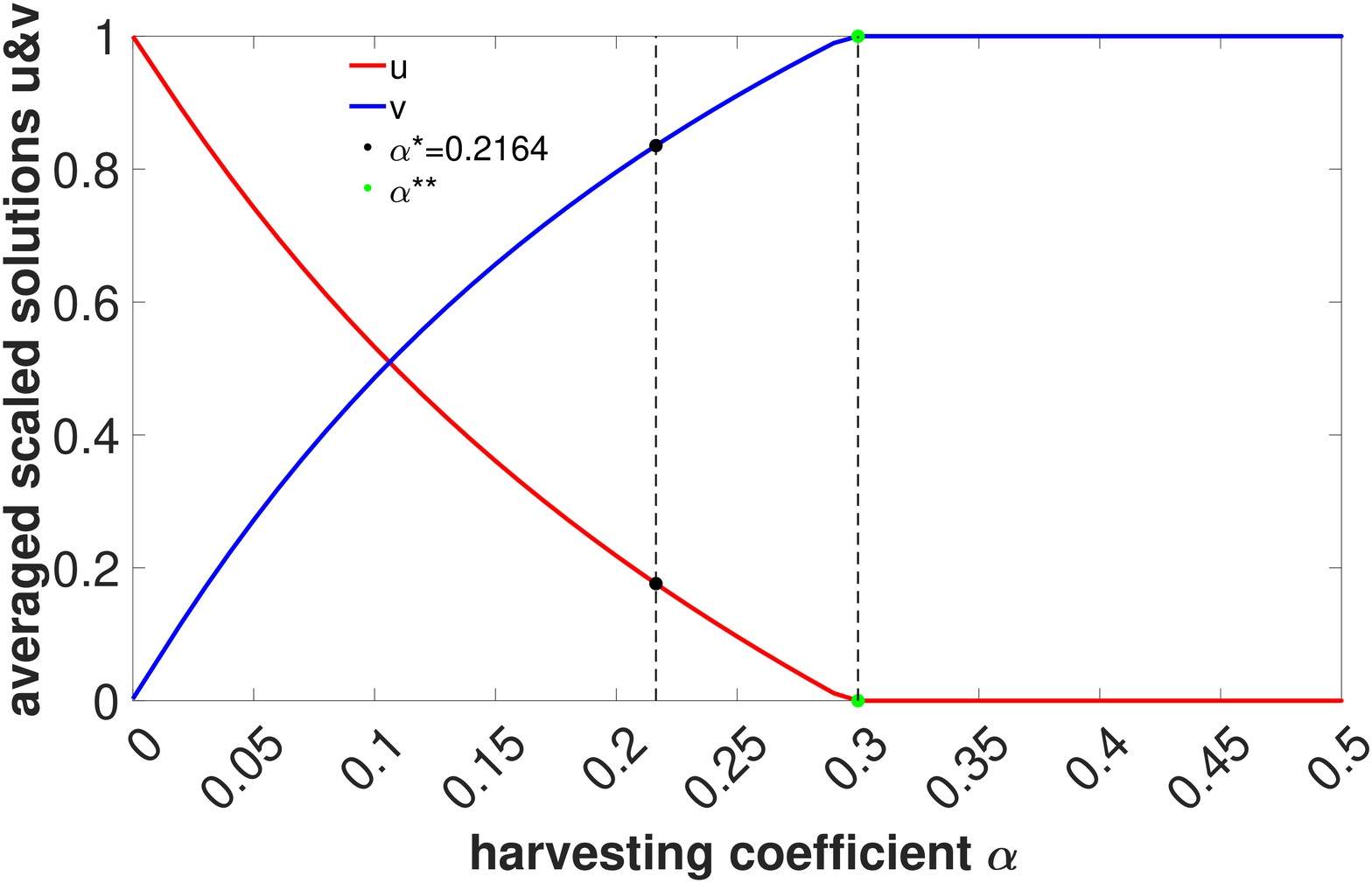} ~~
		\includegraphics[width=0.4\linewidth]{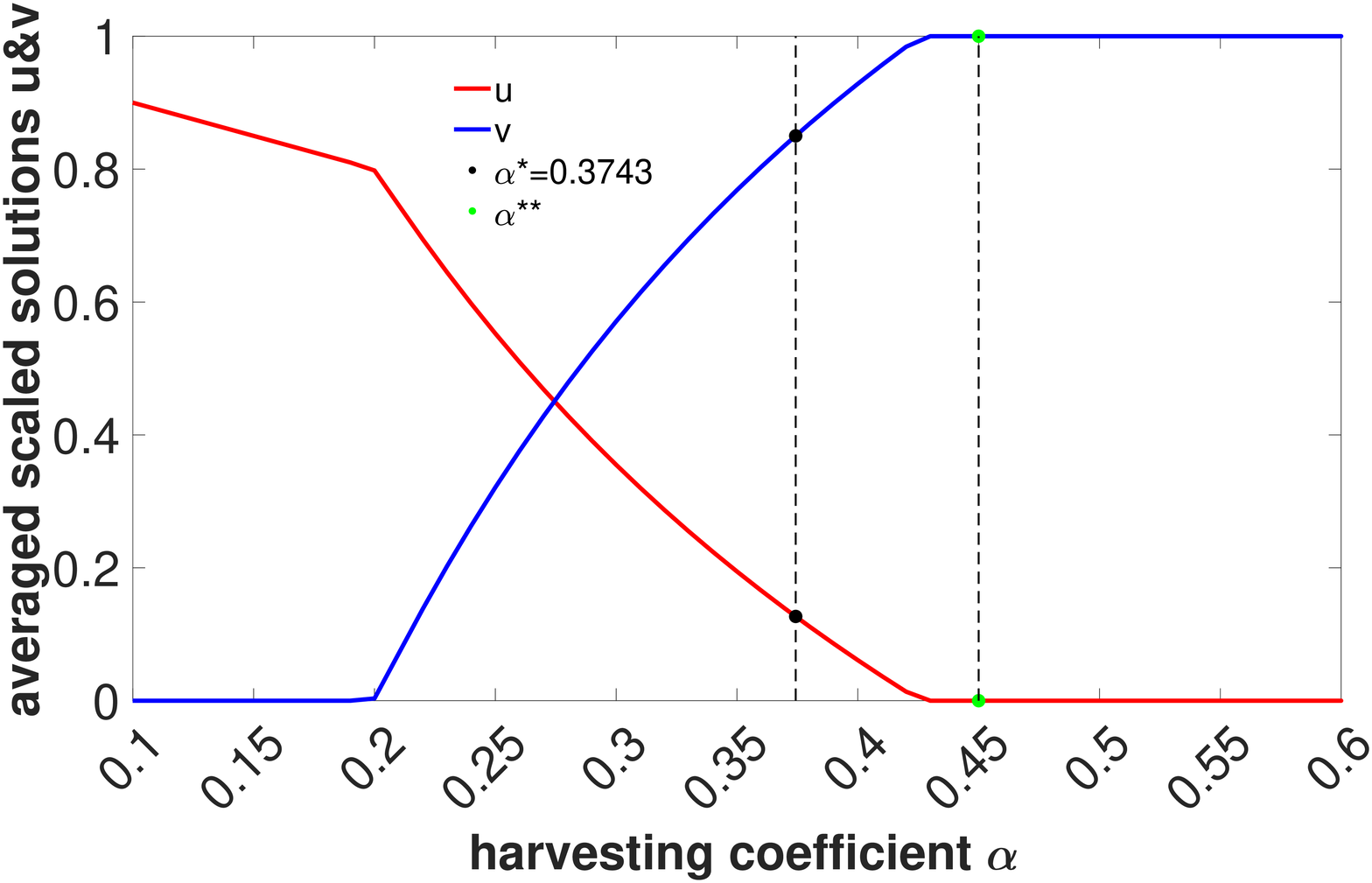}
		\includegraphics[width=0.4\linewidth]{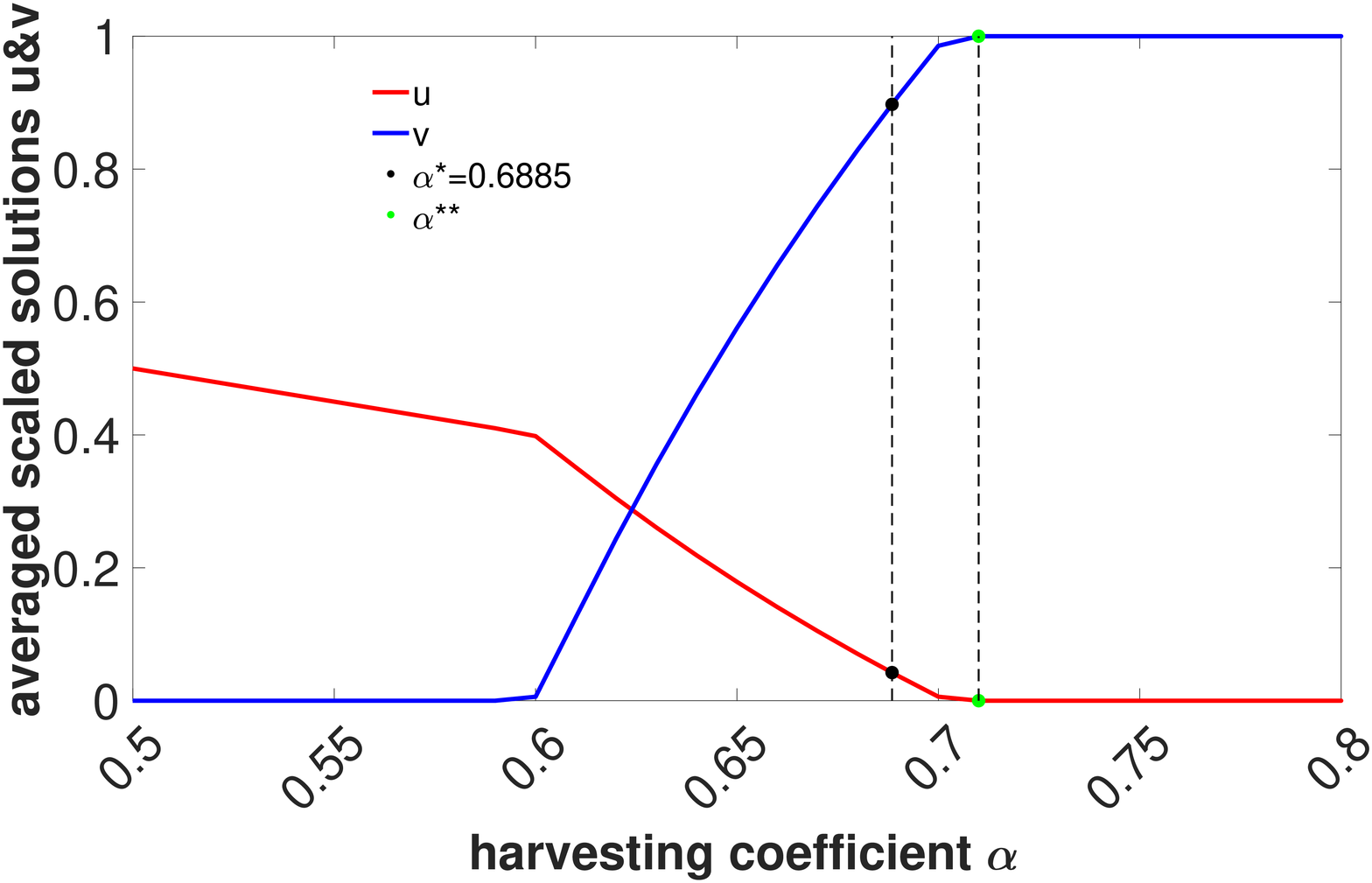} ~~
		\includegraphics[width=0.4\linewidth]{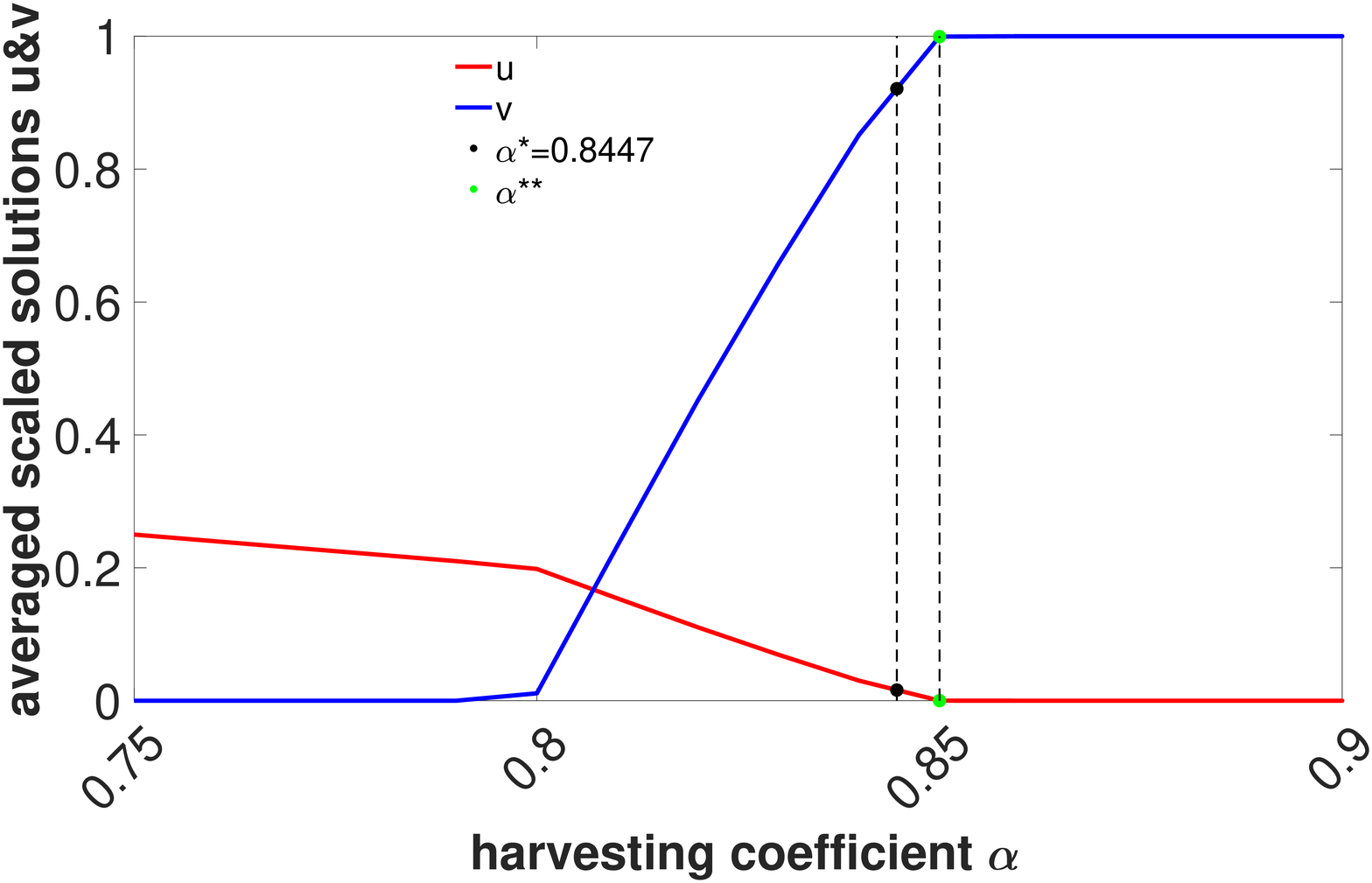}
	\caption{Average solutions to \eqref{eq:main_problem} with $a\equiv b \equiv 1$, 
$Q\equiv1$ and $P = K = 10e^{-12.5 \pi^2(x-2)^2}-e^{-50 \pi^2(x-2)^2}+1$, $r(x) \equiv 1.1$ for 
$x\in(0,4)$ at  $t=T=2000$ are presented  for  $\beta=0,0.2,0.6,0.8$ with respective 
$\alpha^* = 0.2164, 0.3743, 0.6885, 0.8447$.}
	\label{example_2_fig_2}
\end{figure}
\autoref{example_2_fig_3} presents dependency of the average population density on the harvesting rates $\alpha$ and $\beta$. 
\begin{figure}[ht]
	\centering
        \includegraphics[width=0.32\linewidth]{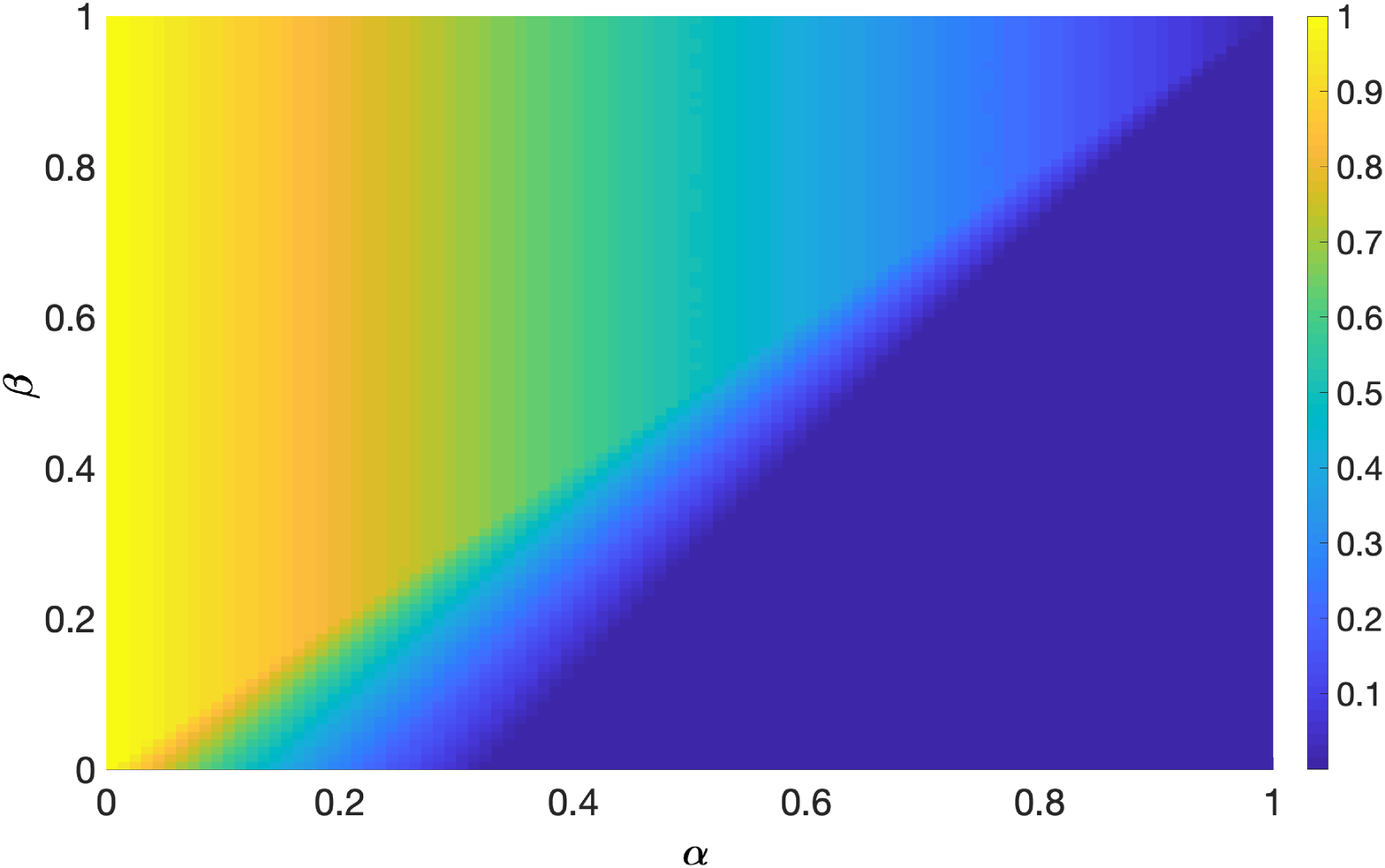}~~
        \includegraphics[width=0.32\linewidth]{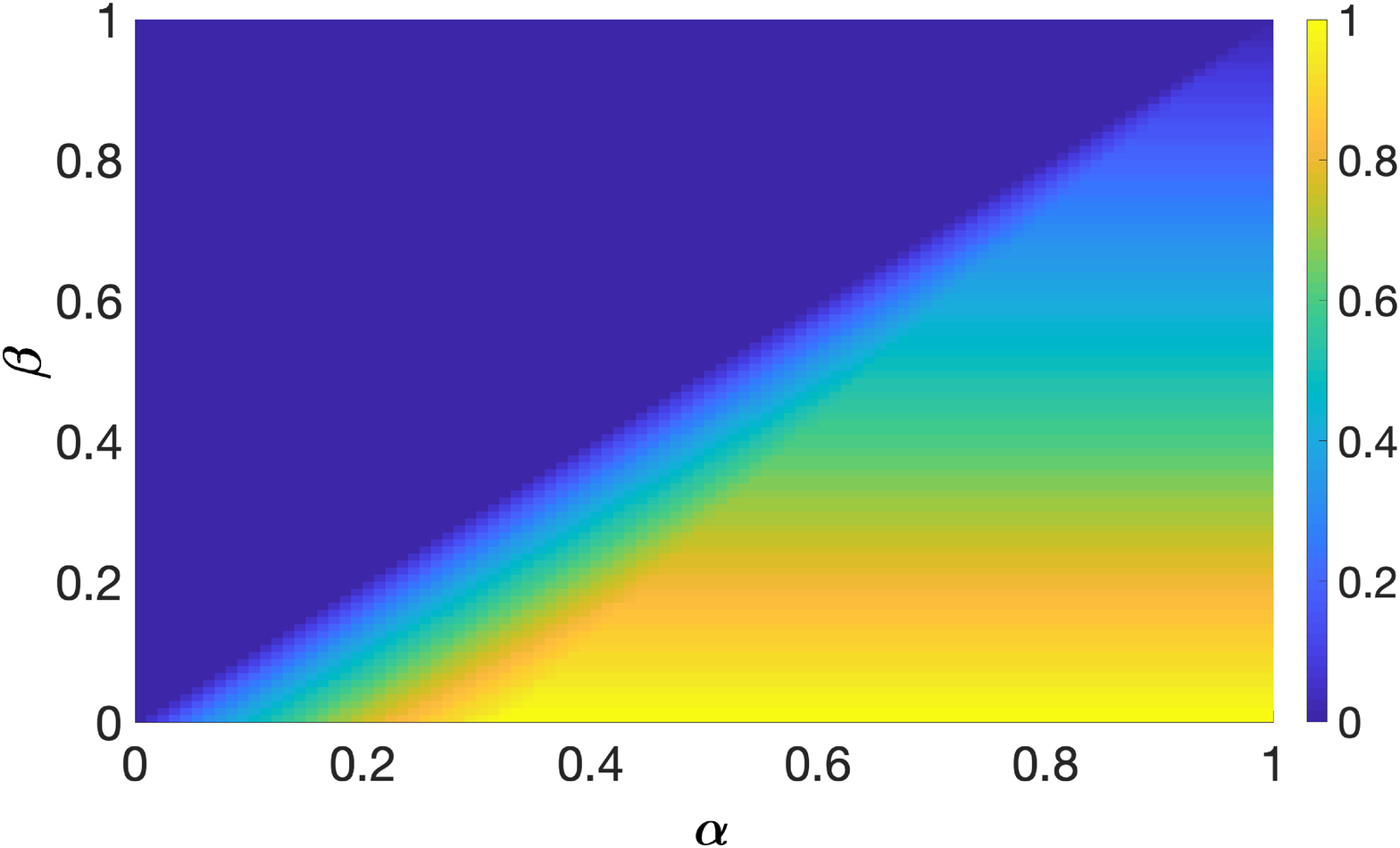}
		    \includegraphics[width=0.32\linewidth]{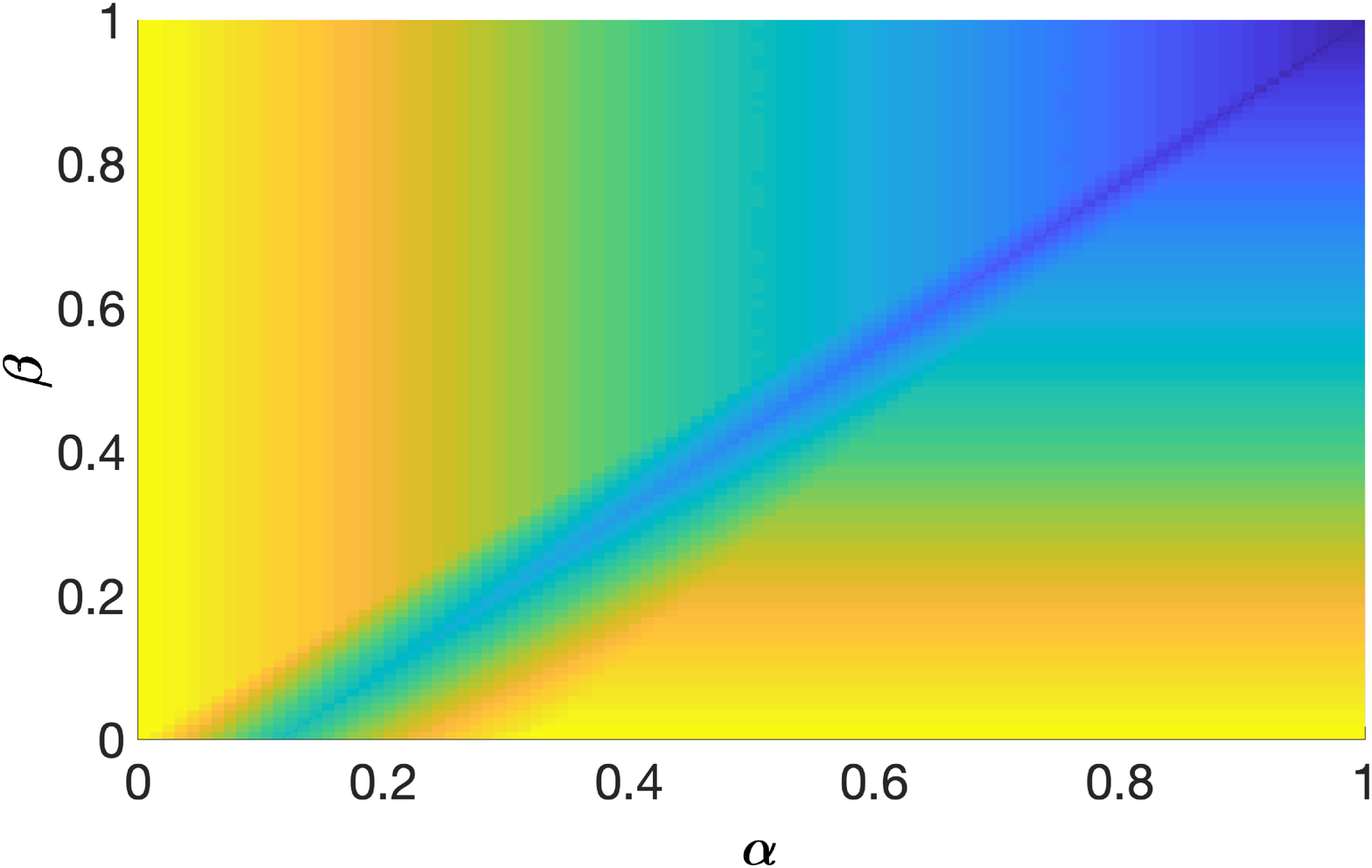}
	\caption{Average population densities of $u$ (left) and $v$ (middle) obtained from average solutions to 
\eqref{eq:main_problem} at $t=T=2000$ for various harvesting rates $\alpha\in [0,1]$ and $\beta\in[0,1]$,
	as well as both $u$ and $v$, where the darker line near bisect corresponds to coexistence.}
	\label{example_2_fig_3}
\end{figure}
\end{Ex}

\begin{Ex}
\label{ex3}
Consider the case where $P$ and $Q$ form an ideal free pair, concretely, $P=1.1+0.5\cos(\pi x)$,  $Q(x) = 0.9+0.5\cos(\pi x)$, 
$K(x) = 2+\cos(\pi x) = P(x)+Q(x)$, $x\in (0,4)$. The initial conditions are $u_0=2.1=v_0$. 
We set $r(x)\equiv 1.1$ and consider $\alpha,\beta \in (0,1)$. 
Note that $P$ and $Q$ are linearly independent. 
According to  \cite[Theorem 1]{CAMWA2016} the coexistence solution will be $(P(x),Q(x))$ in the absence of harvesting. To 
illustrate the results of Theorem~\ref{theorem_harv2}, Part 2,  for each fixed $\beta$ we compute the estimate $\alpha^{*}$ as 
in \eqref{c_ast_est}.
We compare population densities at time $t=T=2000$ (see \autoref{example_3_fig_1}) with different (zero 
and non-zero) values of harvesting, observing the achieved coexistence state.
\begin{figure}[ht]
		\centering
		\includegraphics[width=0.4\linewidth]{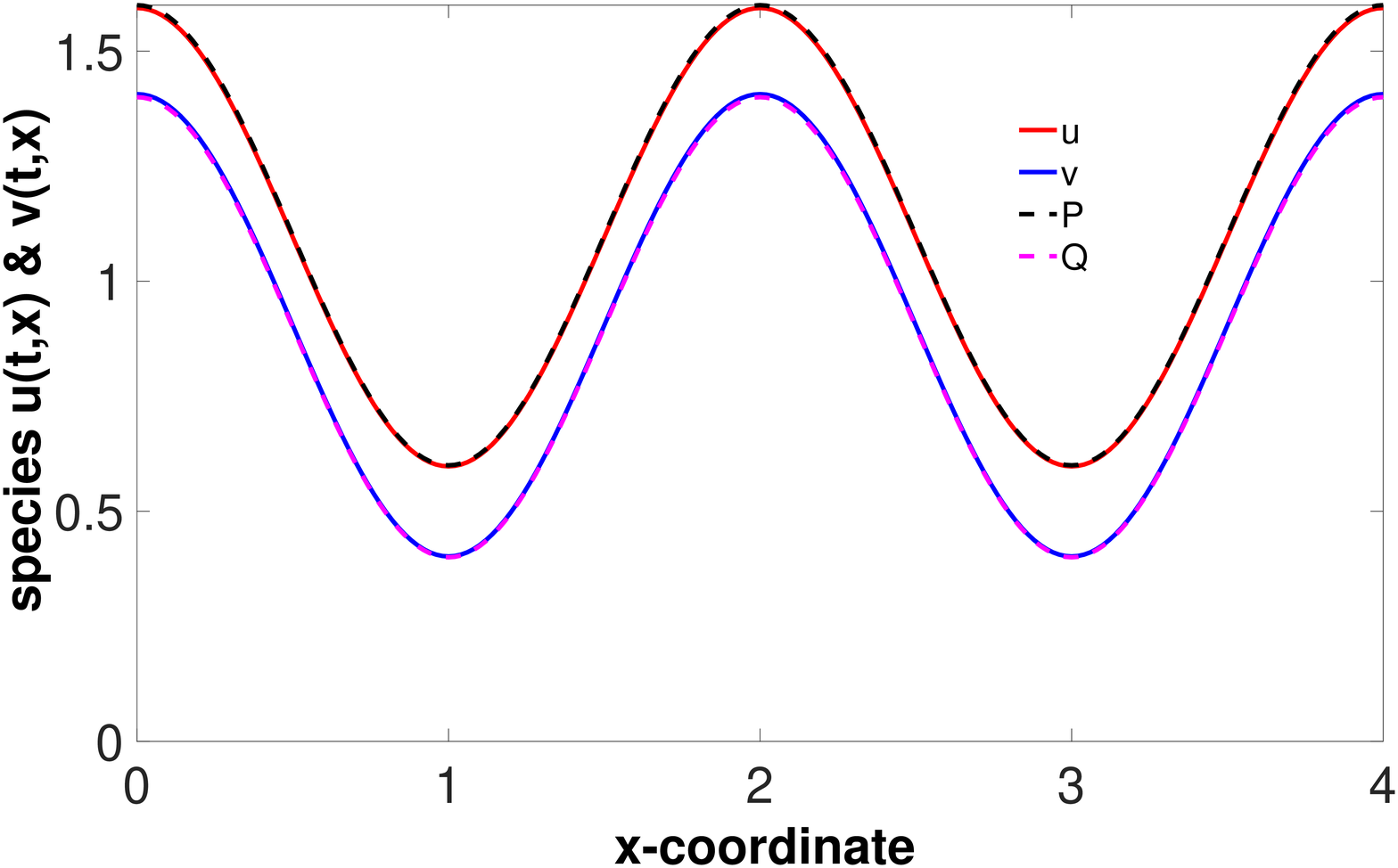}~~
		\includegraphics[width=0.4\linewidth]{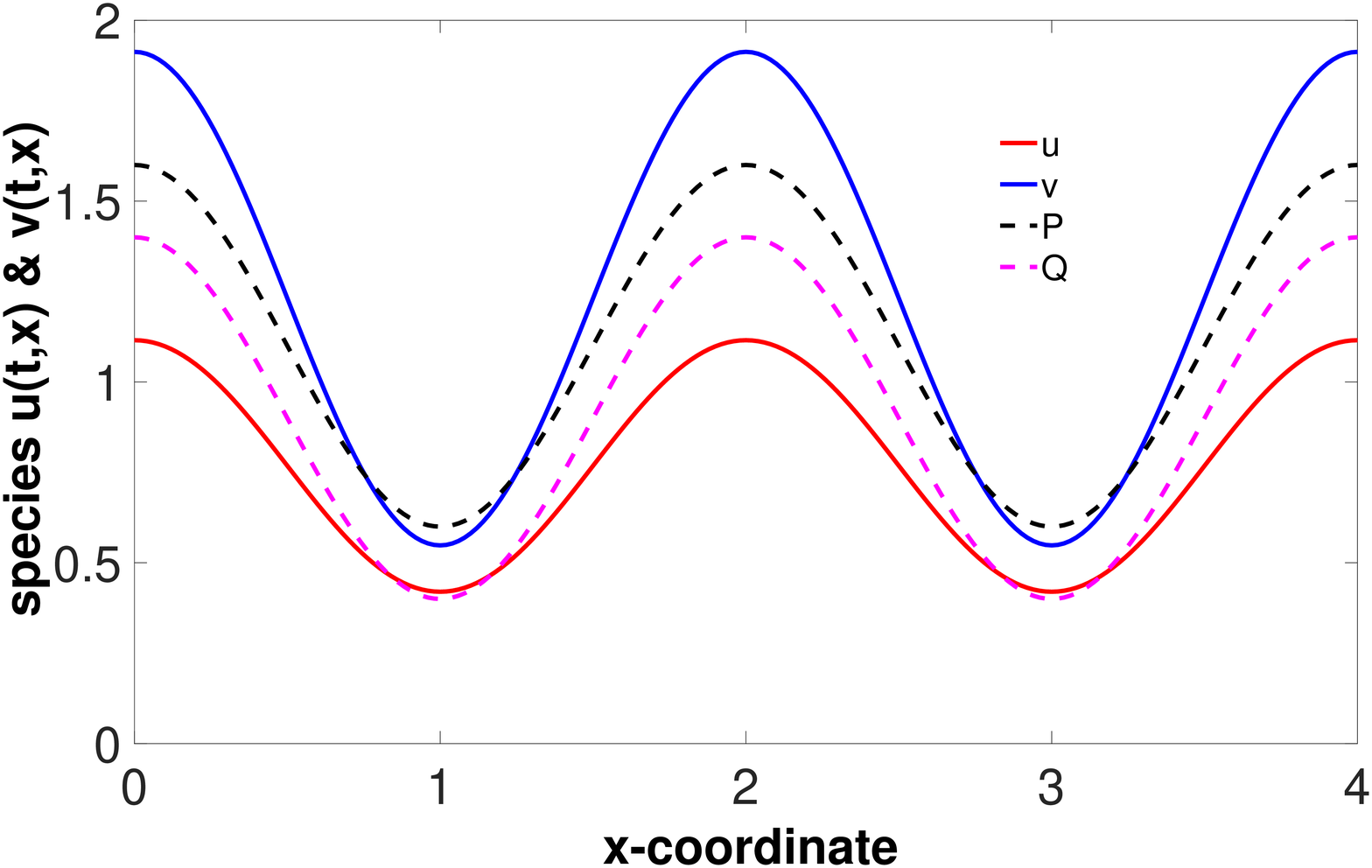}
		\caption{Representations of average solutions $u$ and $v$ of \eqref{eq:main_problem} at  $t=T=2000$ for $\alpha=\beta = 0$ (left) and $\alpha=0.001,~\beta = 0$ (right). We set $u_0=v_0=2.1,~P=1.1+0.5\cos(\pi x), ~ Q(x) = 0.9+0.5\cos(\pi x), ~K(x) = 2+\cos(\pi x) = P(x)+Q(x)$, $r(x)\equiv 1.1$, $x \in (0,4)$.}
		\label{example_3_fig_1}
\end{figure}
In \autoref{example_3_fig_2} the case of solutions at  $t=T=2000$ as functions of harvesting $\alpha,\beta$ is illustrated. 
We construct the dependence of these functions on the value of $\alpha$ for several fixed $\beta\in[0,1)$ indicating the value of $\alpha^{*}$ computed by \eqref{c_ast_est} and $\alpha^{**}$ as the maximal $\alpha_1$ when coexistence occurs computed numerically.
\begin{figure}[ht]
	\centering
	\includegraphics[width=0.4\linewidth]{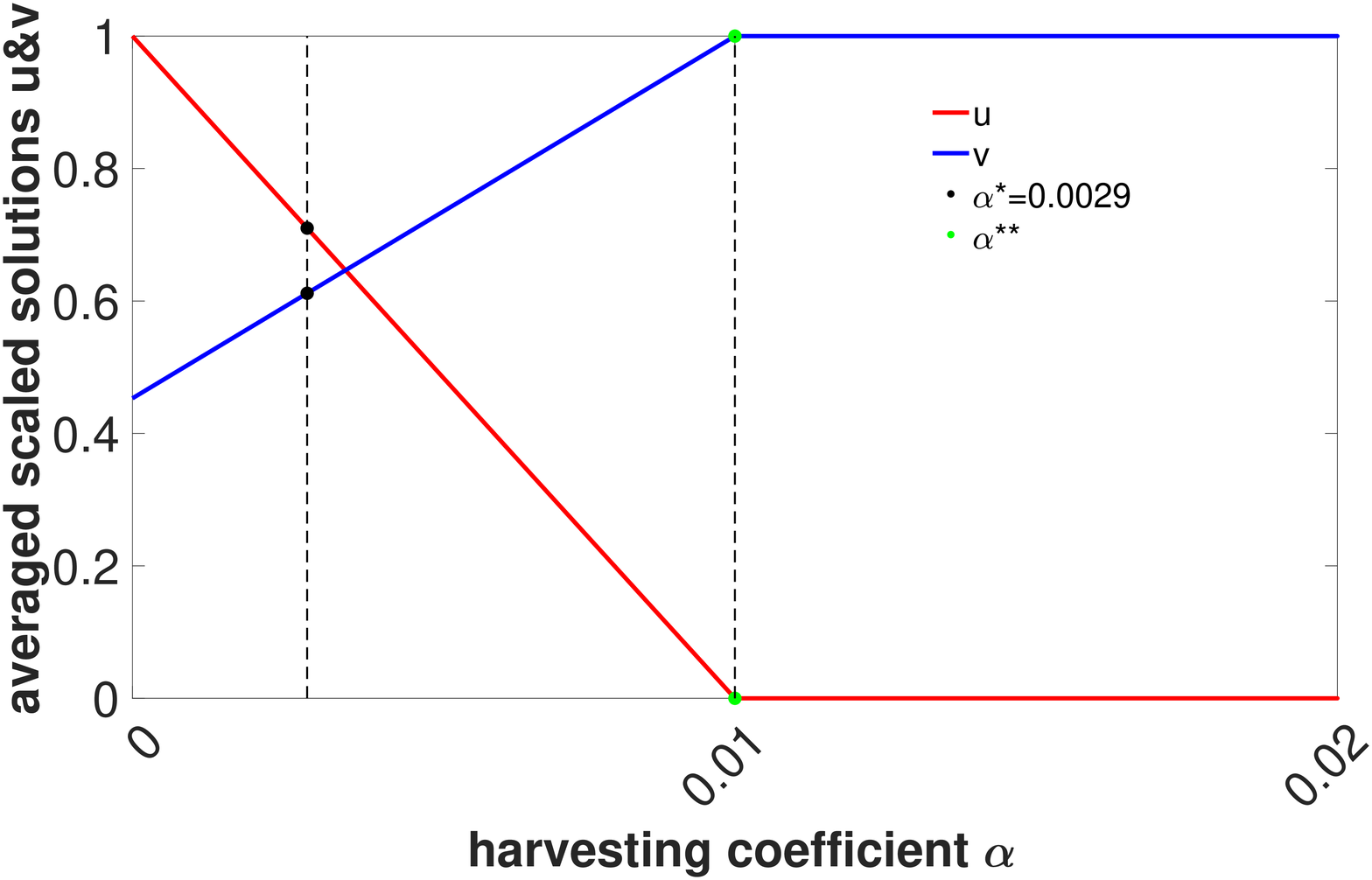} ~~
	\includegraphics[width=0.4\linewidth]{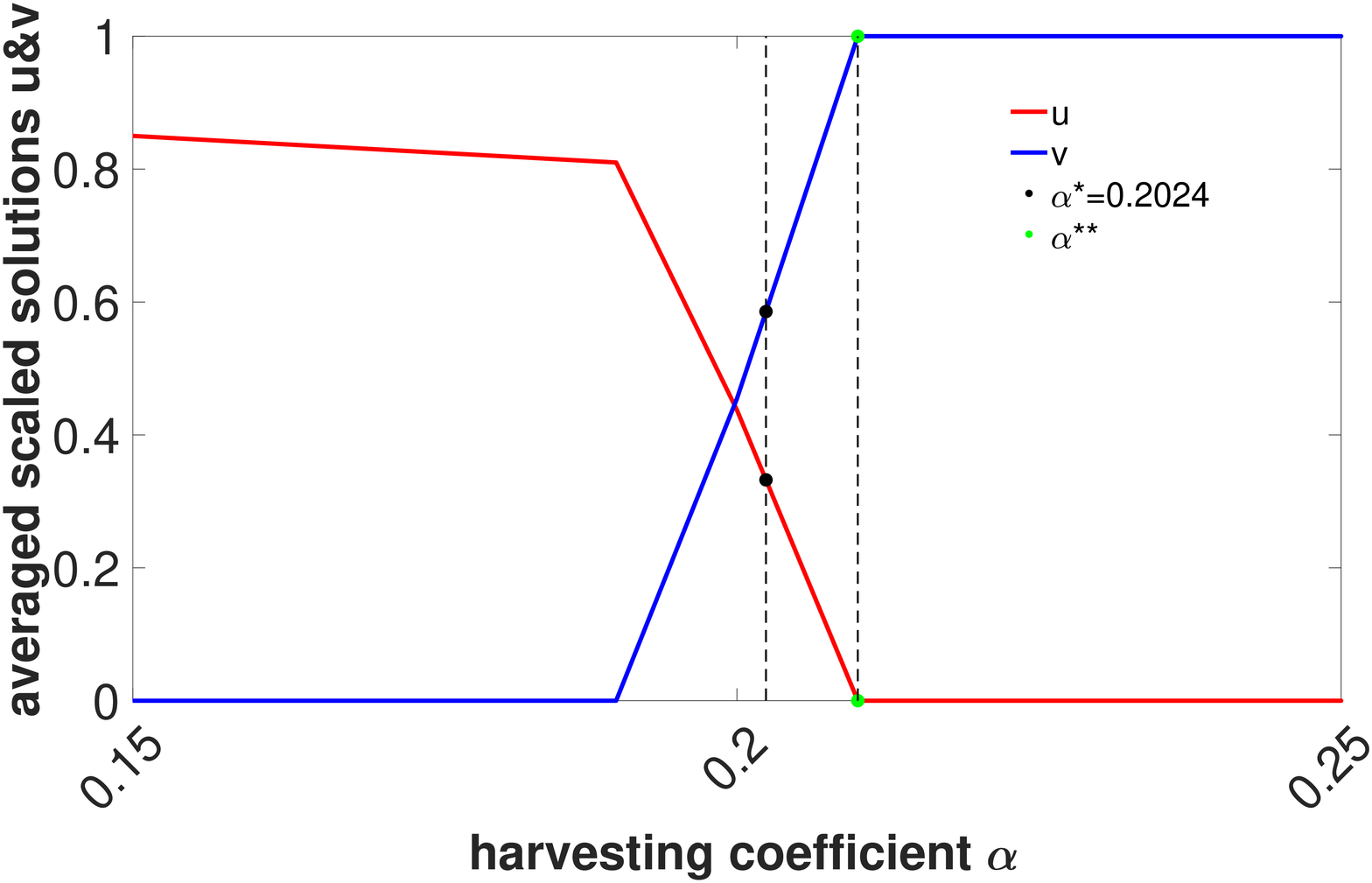}
	\includegraphics[width=0.4\linewidth]{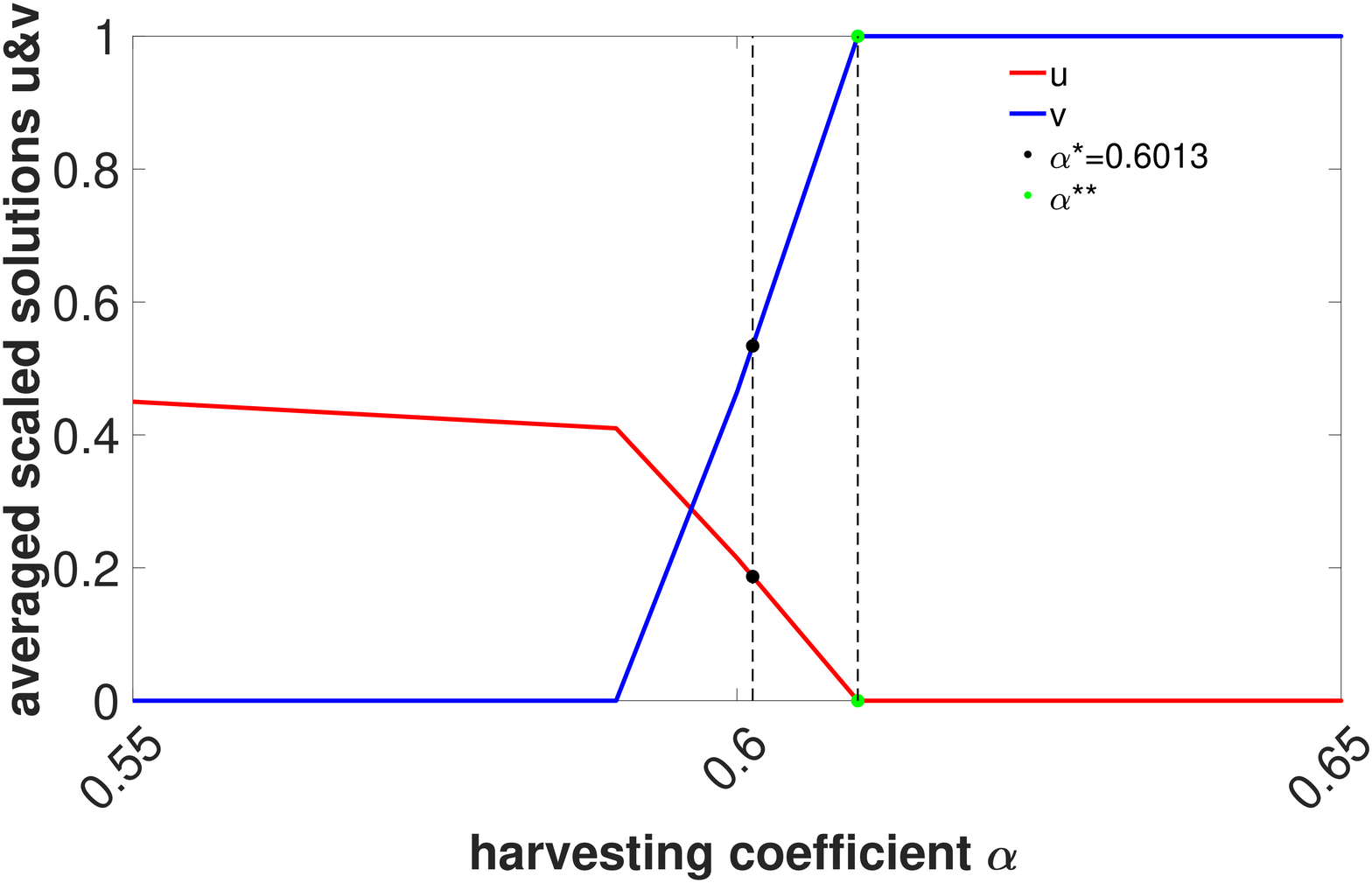} ~~
	\includegraphics[width=0.4\linewidth]{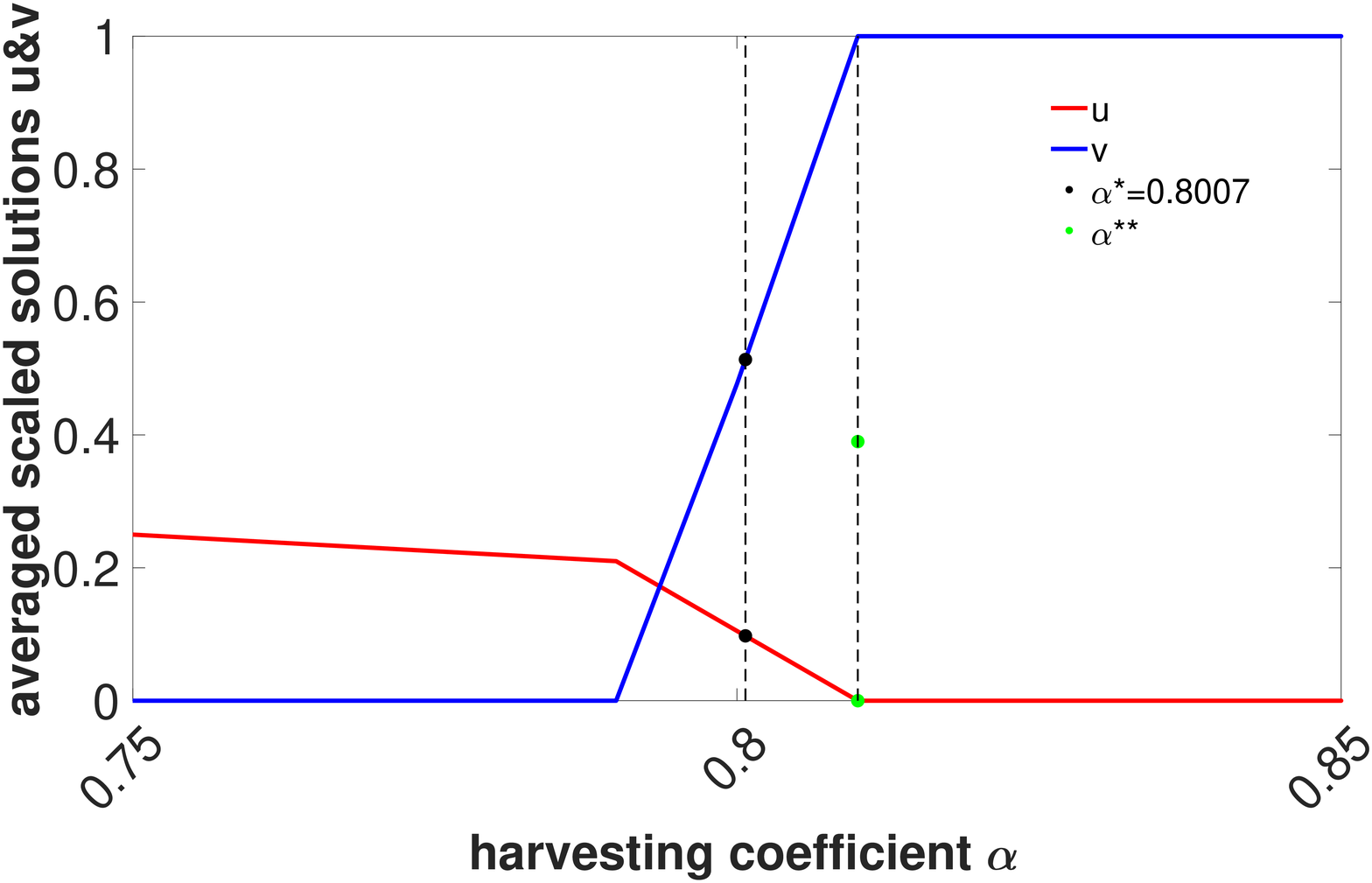}
	\caption{Average solutions of \eqref{eq:main_problem} with $P=1.1+0.5\cos(\pi x), ~ Q(x) = 0.9+0.5\cos(\pi x), ~K(x) = 2+\cos(\pi x) = P(x)+Q(x)$, $r(x)\equiv 1.1$ as functions of $\alpha$ for a fixed $\beta$ at large $t=T=2000$ are presented  for  $\beta=0,0.2,0.6,0.8$ with corresponding $\alpha^{*} = 0.0029, 0.2024, 0.6013, 0.8007$, respectively.}
	\label{example_3_fig_2}
\end{figure}
In \autoref{example_3_fig_3}, we compare average population densities $u$ and $v$ at time $t=T=2000$ as functions of both 
$\alpha$ and $\beta$.
\begin{figure}[ht]
	\centering
	\includegraphics[width=0.4\linewidth]{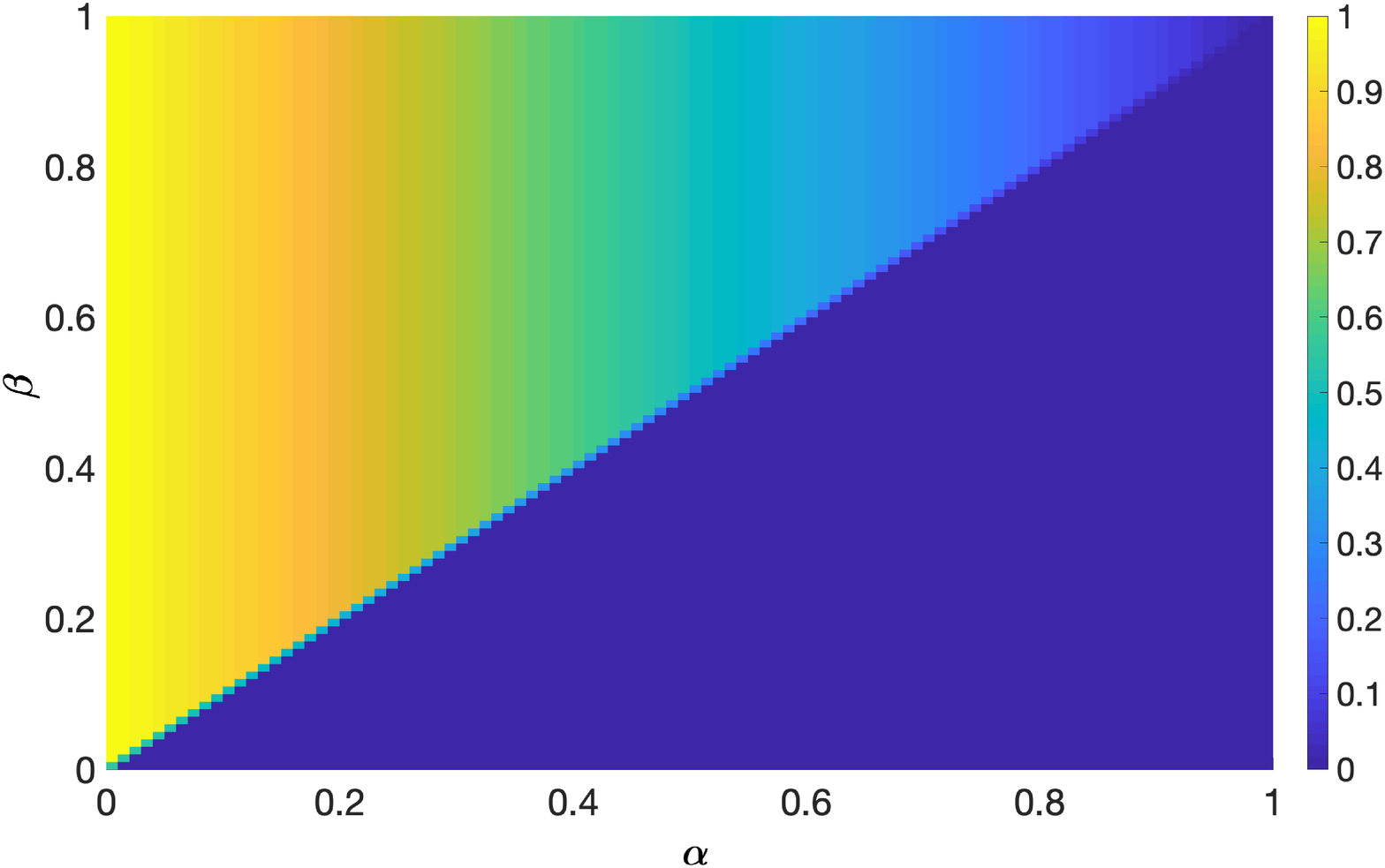}~~
	\includegraphics[width=0.4\linewidth]{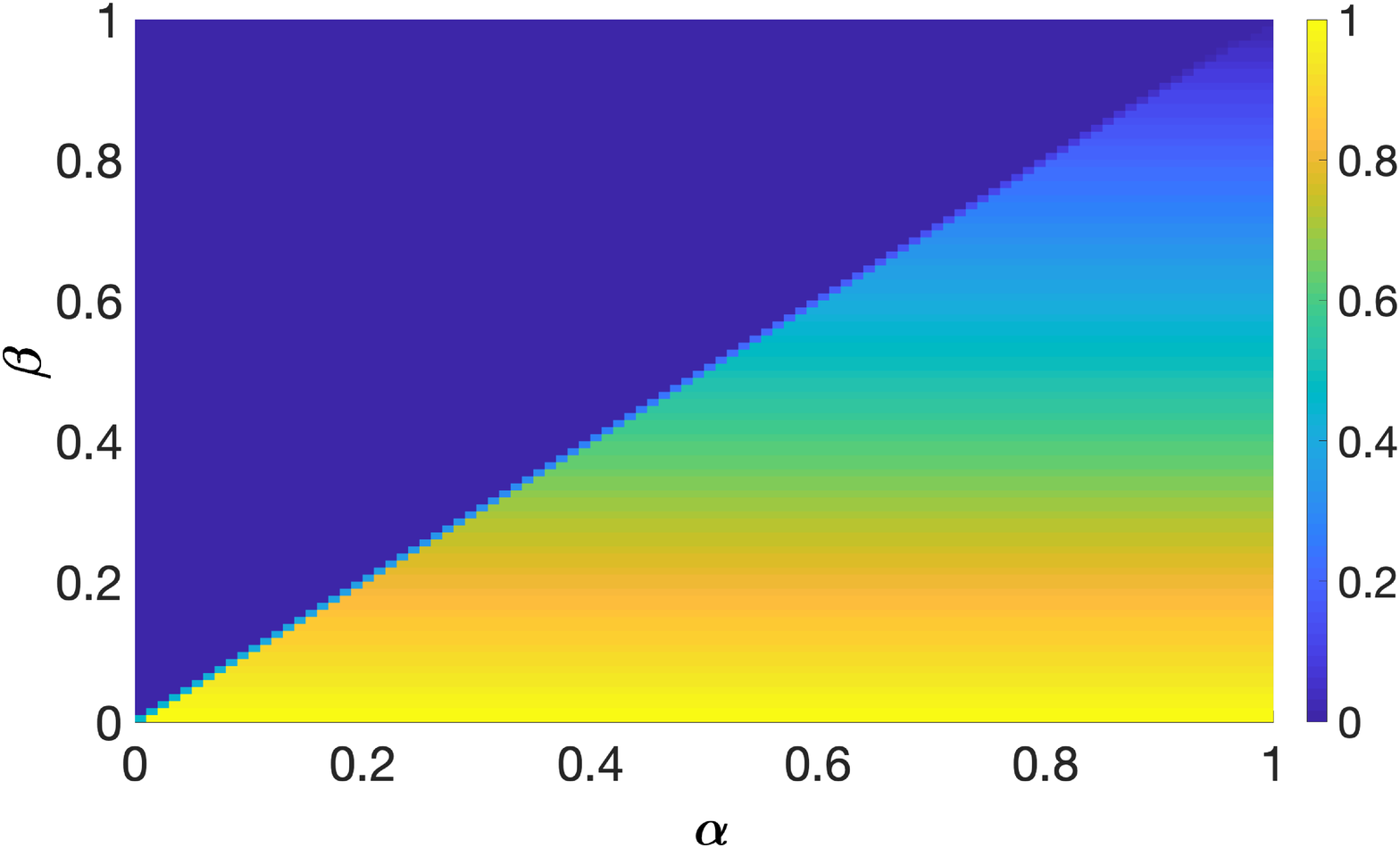}
	\caption{Average population densities of $u$ (left) and $v$ (right) as functions of harvesting rates $\alpha\in [0,1]$ and $\beta\in[0,1]$.}
	\label{example_3_fig_3}
\end{figure}
	
\end{Ex}

\begin{Ex}
\autoref{heatmap} illustrates how the values of $\alpha$ and $\beta$ that provide coexistence are distributed for an 
ideal free pair $P(x) = 1 + 10e^{-12.5\pi^2(x-2)^2}$, $Q(x) = e^{-50\pi^2(x-2)^2}+2$, $K = 
10e^{-12.5\pi^2(x-2)^2}+e^{-50\pi^2(x-2)^2}+3= P+Q$, 
$x\in(0,4)$. Similarly, in \autoref{heatmap_2} we show distribution of coexistence promoting rates $\alpha$ and $\beta$ 
for the case when $P(x) = K(x) = 10e^{-12.5\pi^2(x-2)^2}+e^{-50\pi^2(x-2)^2}+3$, $Q(x) = e^{-50\pi^2(x-2)^2}+2$, $x\in (0,4)$. 
Note that in \autoref{heatmap} there is a region of values near $(0,0)$ for which coexistence is preserved, 
while in the case depicted on \autoref{heatmap_2}, only increasing $\alpha$ allows to avoid competitive exclusion ($\beta=0$). 	
	\begin{figure}[ht]
		\centering
		\includegraphics[width=0.4\linewidth]{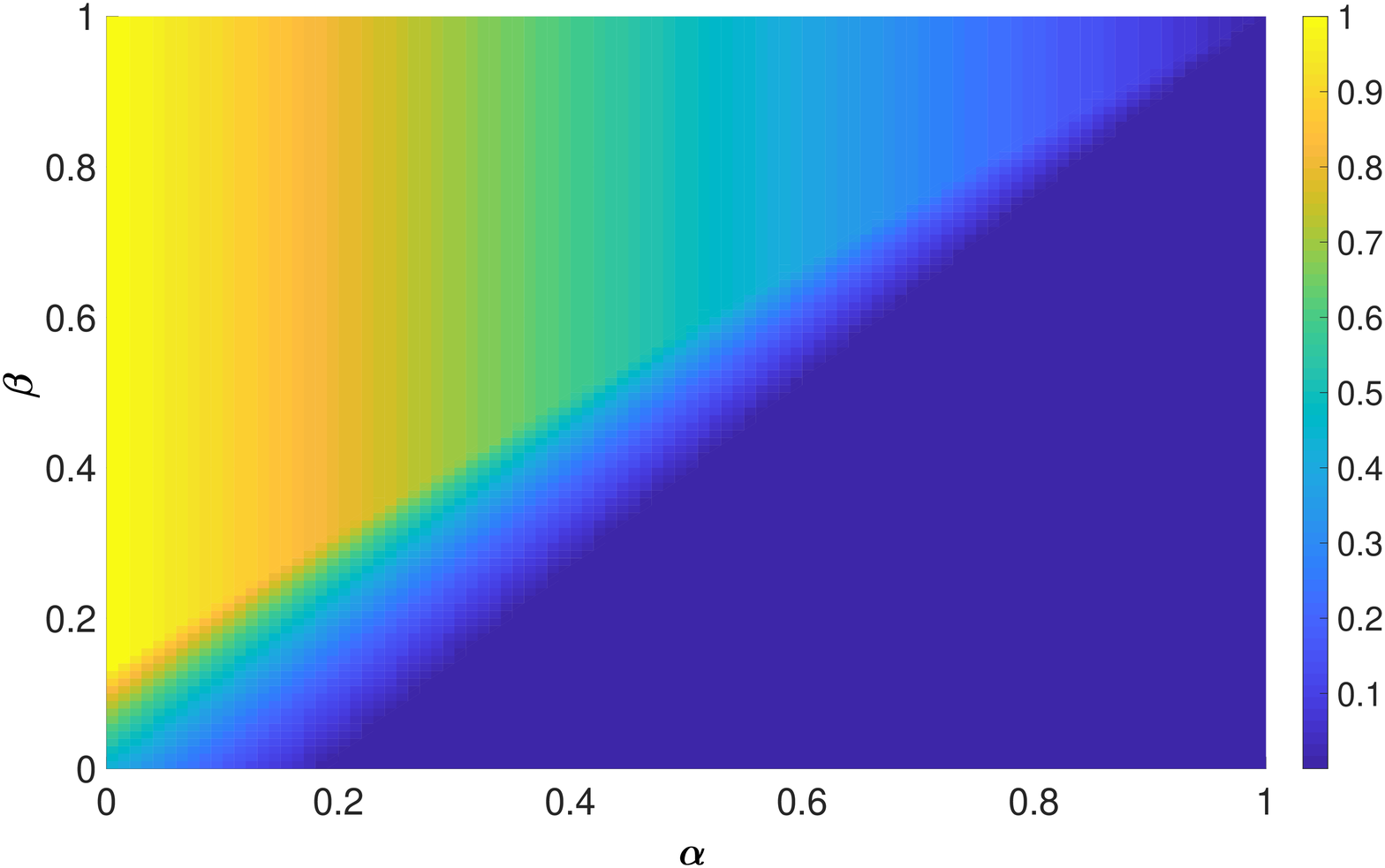}
		\includegraphics[width=0.4\linewidth]{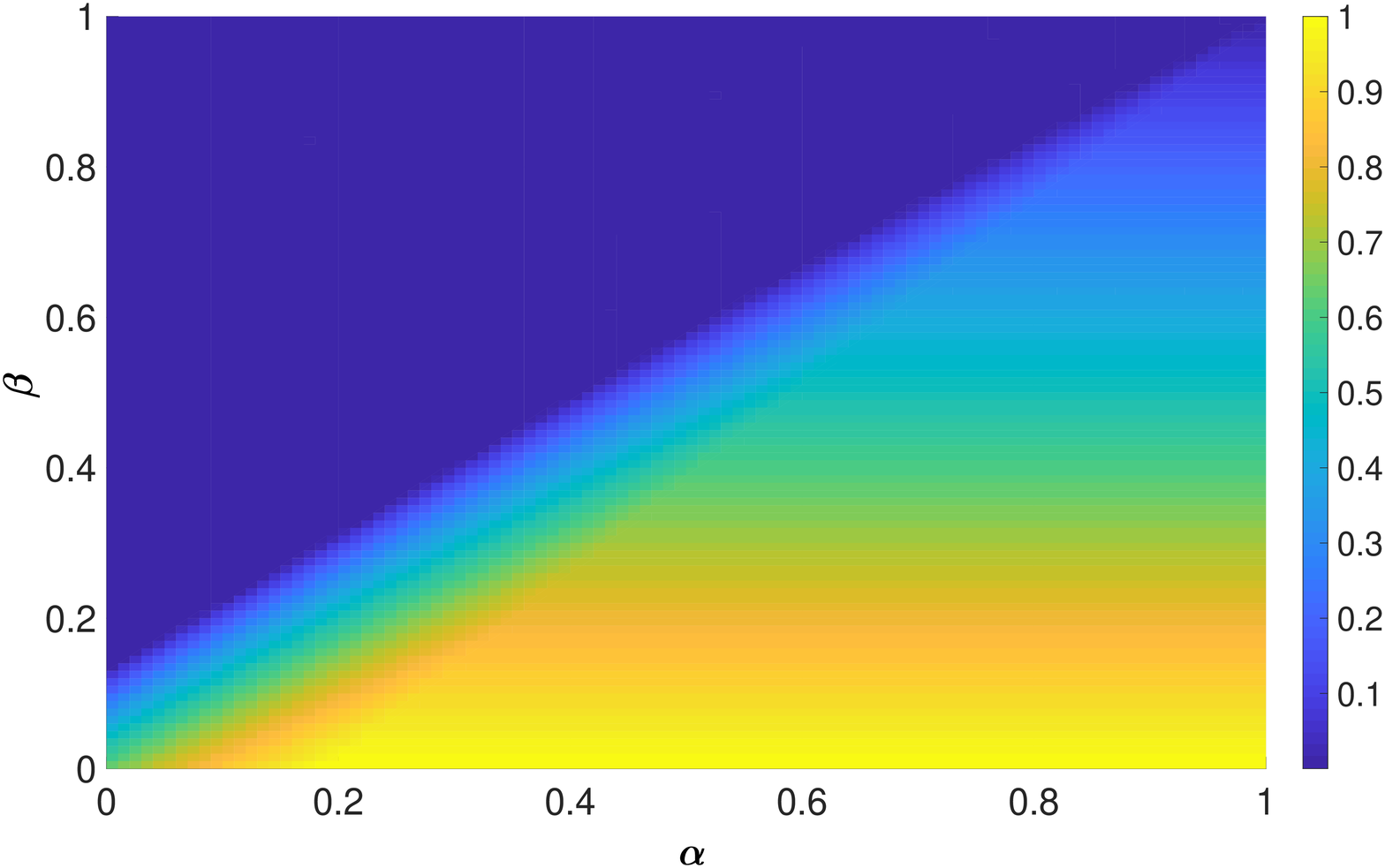}
		\caption{$P(x) = 1 + 10e^{-12.5\pi^2(x-2)^2}$, $Q(x) = e^{-50\pi^2(x-2)^2}+2$, $K = 10e^{-12.5\pi^2(x-2)^2}+e^{-50\pi^2(x-2)^2}+3= P+Q$, $x\in (0,4)$. The band surrounding the bisect of the first quadrant corresponds to harvesting rates that support coexistence.}
		\label{heatmap}
	\end{figure}
\begin{figure}[ht]
	\centering
	\includegraphics[width=0.4\linewidth]{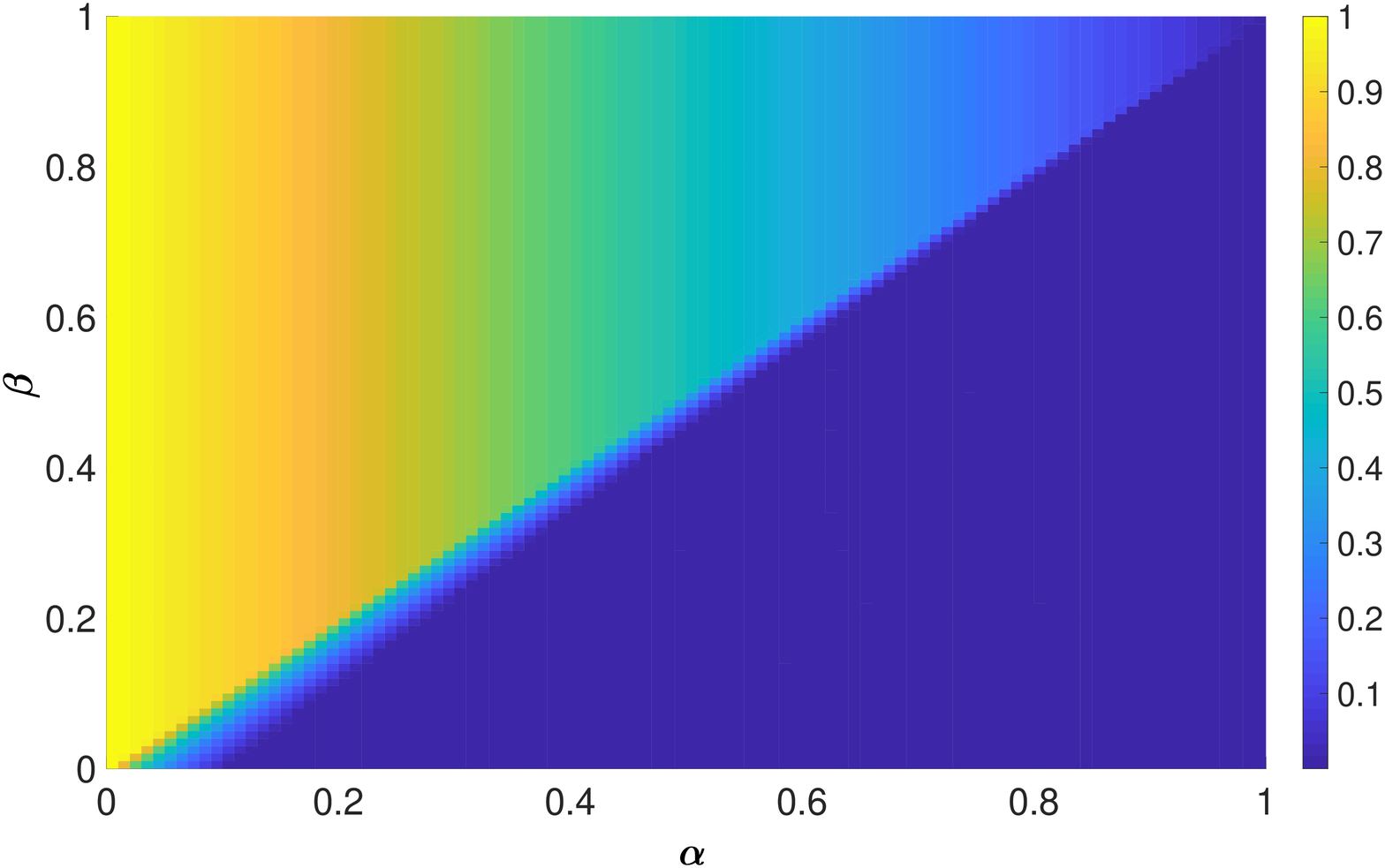}
	\includegraphics[width=0.4\linewidth]{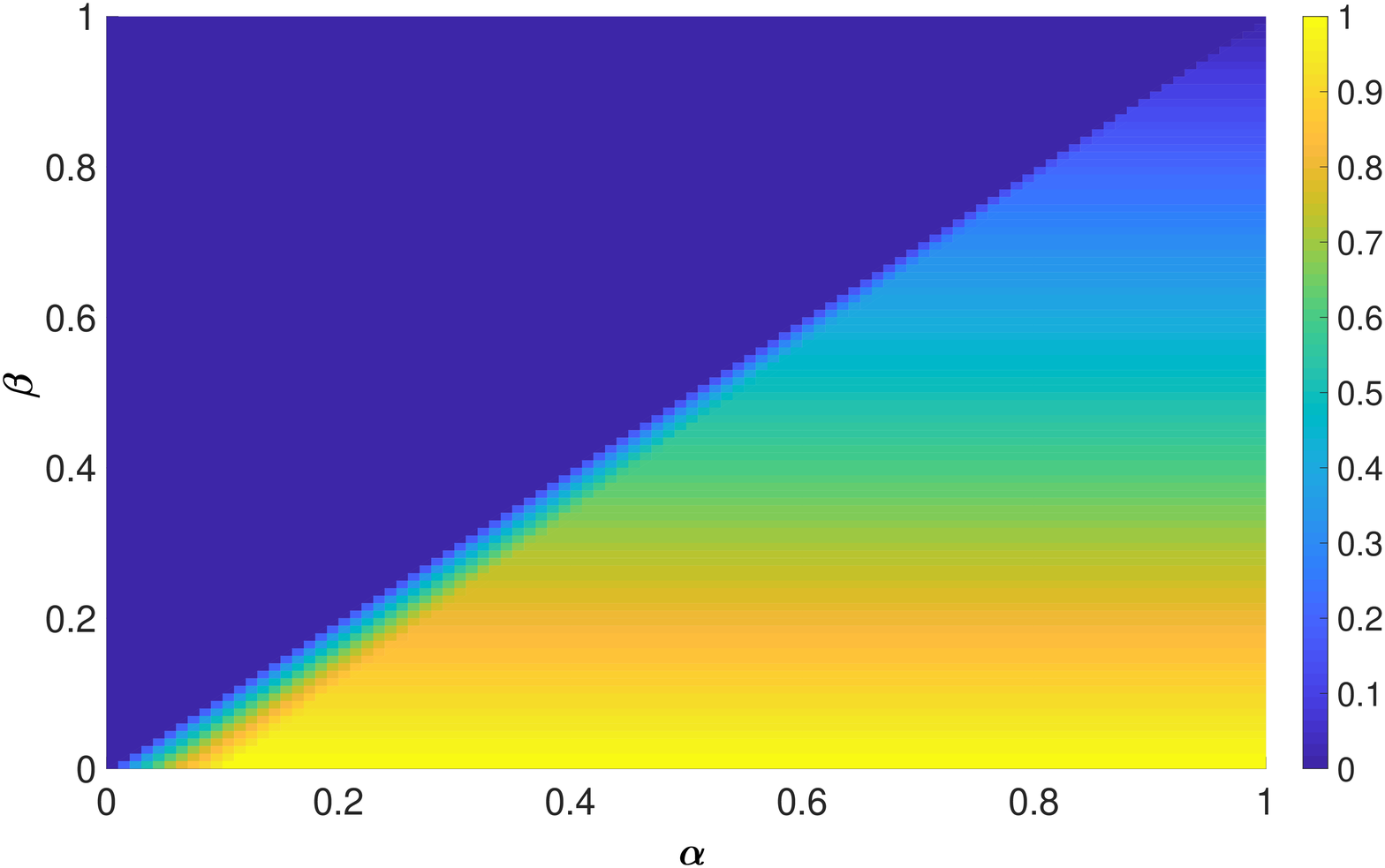}
	\caption{$P(x) = K(x) = 10e^{-12.5\pi^2(x-2)^2}+e^{-50\pi^2(x-2)^2}+3$, $Q(x) = e^{-50\pi^2(x-2)^2}+2$, $x\in (0,4)$, 
with the band around the bisect corresponding to 
coexistence.}
	\label{heatmap_2}
\end{figure}
\end{Ex}

\section{Discussion}
\label{sec:discuss}

The results obtained can shed some light on the species invasion management.
If a more efficient diffuser is an invader, all other parameters being the same, without harvesting or for the same or higher level of harvesting of the resident population, its habitat will be completely populated by invaders bringing  
the original species to extinction. However, any prevalence in harvesting of the invader can guarantee survival for the resident population; moreover, some estimates were presented when the invader goes extinct.
If a resident is a more efficient diffuser, its habitat can be invaded only with more intensive harvesting of the resident 
population. An estimate on harvesting rate is given when both species should coexist, in spite of the fact that the resident is 
exploited more than the invader. An additional estimate provides the level of harvesting guaranteeing that the habitat is completely invaded bringing the resident to extinction. 
If invader-resident form an ideal free pair, for close levels of harvesting coexistence is guaranteed, 
while harvesting levels were evaluated bringing the invader (or the resident) to extinction.

The estimation for the harvesting effort leading to coexistence is important, as it may give an indication 
which harvesting levels lead to sustainability of the exploited population. 
However, the proposed estimate, though approaching the switch value as the harvesting effort for the competitor 
increases, is still far from being sharp. 
Also, if culling is applied to bring one of the species to extinction, the minimal required reduction rate is overestimated.
  
The paper also explores the situation when, in the absence of harvesting, the two species coexist, based on partial specialization (expressed in the dispersal strategies) and  resources sharing. In this case, once the exploitation rate of one of the populations is fixed, we obtain a limitation on the harvesting level of the other population, such that the two species still coexist. Usually, this assumes a close level of harvesting for both species.
If there is an ideal free pair, harvesting of one of the species has to be high enough for
a possibility of the harvested species to be brought to extinction. For substantial efforts, even some smaller harvesting rates for the other species can be allowed, still leaving this population as the only survivor in the competition.
Also, as a harvested system is reduced to a model with modified growth rates and carrying capacities, we explored the influence 
of these parameters, changed proportionally, on the competition outcome.

The general approach to harvesting problems allows to extend research of the present paper in the following direction.
Here we considered external influence in the form of harvesting only.
More powerful control would involve stocking along with harvesting. For example, if a more efficient diffuser tries to invade a habitat of a less efficient resident, stocking of the resident species can prevent it from extinction.  
If a resident is in additional disadvantage being a preferred prey for a local predator, an additional stocking effort is required to avoid its extinction.
If stocking is allowed in addition to harvesting, for the ideal free pair and any (even zero) harvesting effort,
the population can be brought to extinction by a competitor having an excess of external resources. 
These results, we believe, can be justified in the same way as Theorems~\ref{theorem_harv1} and \ref{theorem_harv2}. 
{\bf While numerical simulations illustrate global stability of a unique coexistence equilibrium, whenever both 
semi-trivial equilibria is unstable, we do not state its uniqueness and global attractivity. Establishing this 
fact is still an open problem.
}

As a combination of the present paper and \cite{BKK2015}, it is possible to incorporate harvesting in the system where
one of the species more efficiently uses resources than the other one expressed in a higher carrying capacity. 
If the harvesting effort is constant, after appropriate substitutions, Theorems~\ref{theorem_harv1}, \ref{theorem_harv2}, 
\ref{theorem1a} and \ref{theorem2a} would lead to the description of possible outcomes. We expect
that the two species would coexist for a harvesting effort of the efficient consumer in a certain interval. For smaller efforts, a less efficient consumer goes extinct, while for higher efforts it becomes the only survivor in the competition.

It would be interesting to extend the present analysis to the case when harvesting is semi-constant \cite{Roques2007},
being constant everywhere but for small population levels, where it is close to proportional to the population size.
Such an assumption allows us to avoid the situation when solutions become negative. Certainly, to apply the same results, we have to suppose that, though independent of the population size, the harvesting function is spatially heterogeneous,
being proportional to both the growth rate and the carrying capacity. The study of significant and remnant harvested populations (introduced in \cite{Roques2007}) will strongly depend on the bounds of the carrying capacity.

Note that the results of the present paper are based on the essential assumption that the carrying capacity is positive on the whole domain. However, both in literature and practical applications, the carrying capacity can become zero at some locations, especially close to the boundary of the domain. Zero or even negative growth rates can be considered, see, for example, \cite{Lam2015}. For the type of diffusion discussed in the present paper, the carrying capacity driven diffusion, existence of areas separated by the strips with the zero carrying capacities can lead to separated non-mixing patches, while
the other species would freely spread in the whole domain.
It would be interesting to explore how a complicated structure of the domain, in particular, quickly changing $r$ and $K$, 
can influence survival. In Example~\ref{ex2}, high variation of carrying capacities promotes survival of the species with a carrying capacity driven diffusion.

Unlike \cite{Korobenko2014}, in the present paper both species are subject to the logistic type of growth.  
However, the case when there is a strong Allee effect, 
i.e. there is a minimum population size guaranteeing survival, even without harvesting, is still an open problem.
The influence of the growth law on harvesting outcome is outlined in \cite{Korobenko2013}: 
higher per capita growth rates for smaller population densities are crucial. 
It would also be interesting to explore competition of  the two species with different growth laws.

\section{Auxiliary Results and Proofs}
\label{sec:proofs}

We start with a result on a stationary solution weighted average.

\begin{Lemma} \cite[Lemma 3]{CAMWA2016}
\label{Lsteady2}
If $v^{\ast}$ is a positive solution of (\ref{semi_v}) then
\begin{equation}\label{eq_aux_lem1}
\int \limits_\Omega r(x) K_2(x) \,dx > \int \limits_\Omega r(x) v^*(x) \, dx.
\end{equation}
\end{Lemma}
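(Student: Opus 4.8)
The plan is to collapse the elliptic problem \eqref{semi_v} into a single scalar identity by integration, and then recognize the claimed inequality as a completed square. First I would integrate the stationary equation \eqref{semi_v} over $\Omega$. By the divergence theorem the flux term contributes $\int_{\partial\Omega} b\,\partial(v^*/Q)/\partial n\,dS$, which vanishes because of the no-flux boundary condition $\partial(v^*/Q)/\partial n=0$ in \eqref{semi_v}. Since $r_2>0$, dividing out the constant $r_2$ leaves the reaction term, giving
\[
\int_\Omega r(x)\,v^*(x)\,dx=\int_\Omega r(x)\,\frac{\bigl(v^*(x)\bigr)^2}{K_2(x)}\,dx .
\]
This one relation carries all the information from the PDE that the argument needs.

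Next I would exploit this identity by completing the square. Using that $K_2>0$ on $\overline{\Omega}$, expand
\[
\int_\Omega r\,\frac{(v^*-K_2)^2}{K_2}\,dx
=\int_\Omega r\,\frac{(v^*)^2}{K_2}\,dx-2\int_\Omega r\,v^*\,dx+\int_\Omega r\,K_2\,dx .
\]
Substituting the integrated identity for $\int_\Omega r (v^*)^2/K_2\,dx$ collapses the first two terms, so that
\[
\int_\Omega r\,K_2\,dx-\int_\Omega r\,v^*\,dx=\int_\Omega r\,\frac{(v^*-K_2)^2}{K_2}\,dx\ \ge\ 0 ,
\]
because $r\ge 0$, $K_2>0$, and the remaining factor is a square. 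This already delivers the non-strict inequality; equivalently, the same bound is the weighted Cauchy--Schwarz inequality applied to $v^*/\sqrt{K_2}$ and $\sqrt{K_2}$ with weight $r$, the identity above being exactly what forces the two sides of Cauchy--Schwarz to interlock.

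The hard part will be upgrading $\ge$ to the strict inequality asserted in \eqref{eq_aux_lem1}. Equality forces the nonnegative integrand $r(v^*-K_2)^2/K_2$ to vanish almost everywhere, i.e. $v^*\equiv K_2$ on the set $\{x:r(x)>0\}$. I would rule this out using the nondegeneracy of the pair $(K_2,Q)$: if $v^*\equiv K_2$ there, then the factor $1-v^*/K_2$ vanishes on an open set, and \eqref{semi_v} reduces on that set to $\nabla\cdot[b\nabla(K_2/Q)]\equiv 0$, which runs against $K_2/Q$ being a genuinely heterogeneous profile (in the application $K_2=(1-\beta)K$, so this is the standing assumption $\nabla\cdot[b\nabla(K/Q)]\not\equiv 0$). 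Combined with the uniqueness of the positive solution of \eqref{semi_v} guaranteed by Lemma~\ref{lem0}, this excludes $v^*\equiv K_2$, the integrand is positive on a set of positive measure, and the inequality is strict. I expect the only genuinely delicate point to be the measure-theoretic passage from ``$v^*=K_2$ on $\{r>0\}$'' to a contradiction when $r$ is allowed to vanish on part of $\Omega$; here I would lean on $\{r>0\}$ being open and nonempty and on unique continuation for the elliptic operator $\nabla\cdot[b\nabla(\,\cdot\,/Q)]$, or else invoke the standing positivity of $r$ from the source of the lemma.
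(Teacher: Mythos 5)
Your core computation is correct and is, in substance, the standard proof of this statement: the paper itself never proves Lemma~\ref{Lsteady2} (it is imported by citation from \cite[Lemma 3]{CAMWA2016}), and the route you take --- integrate \eqref{semi_v} with the no-flux boundary condition to get $\int_\Omega r v^*\,dx=\int_\Omega r\,(v^*)^2/K_2\,dx$, then complete the square to obtain $\int_\Omega rK_2\,dx-\int_\Omega rv^*\,dx=\int_\Omega r\,(v^*-K_2)^2/K_2\,dx\ge 0$ --- is exactly the argument behind the cited result (your weighted Cauchy--Schwarz remark is the same computation in different clothing). So the non-strict inequality is fully established.

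The genuine gap is the upgrade to strict inequality, and under the hypotheses as stated in the present paper it cannot be closed, because the strict statement is then false. Equality forces $v^*=K_2$ only on $\{r>0\}$, whereas the nondegeneracy condition $\nabla\cdot[b\nabla(K_2/Q)]\not\equiv 0$ is a condition on all of $\Omega$; the two are compatible, and unique continuation cannot rescue you since $v^*-K_2$ satisfies no elliptic equation to which it would apply ($K_2$ is not a solution of \eqref{semi_v}). Concretely, with the paper's standing assumption that $r\ge 0$ is positive only on a nonempty open subdomain, take $\Omega=(0,2)$, $b\equiv 1$, $r$ smooth and positive exactly on $(0,1)$, and $K_2=Q(1+\phi)$ with $\phi$ smooth, nonnegative, $\phi\equiv 0$ on $[0,3/2]$ and nonconstant on $(3/2,2)$: then $\nabla\cdot[b\nabla(K_2/Q)]\not\equiv 0$, yet $v^*=Q$ solves \eqref{semi_v} (the reaction term vanishes identically, since at every point either $r=0$ or $v^*=K_2$), and $\int_\Omega rK_2\,dx=\int_\Omega rv^*\,dx$. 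So strictness genuinely requires the strengthened hypothesis you gesture at in your last sentence, namely $r>0$ on $\overline\Omega$ (as in the source), together with something excluding $v^*\equiv K_2$, e.g.\ the nondegeneracy condition. With $r>0$ the clean finish is global rather than pointwise: equality makes the reaction term vanish identically, so $\nabla\cdot[b\nabla(v^*/Q)]\equiv 0$; multiplying by $v^*/Q$ and integrating by parts with the no-flux condition gives $\int_\Omega b\,|\nabla(v^*/Q)|^2dx=0$, hence $v^*=\lambda Q$, hence $K_2=\lambda Q$ on $\Omega$, i.e.\ $\nabla\cdot[b\nabla(K_2/Q)]\equiv 0$, the desired contradiction. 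In short: keep your first two steps, but replace the unique-continuation fallback (a dead end) by the explicit extra hypotheses plus this energy argument.
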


As a partial case of \eqref{eq:main_modified},  consider a competition model with proportional, 
space-dependent carrying capacities and growth rates with  $r_1>0$, $r_2>0$
\begin{equation}
\label{eq:modified}
\begin{aligned}
&\frac{\partial u(t,x)}{\partial t} =  \nabla \cdot \left[ a(x) \nabla \left( \frac{u(t,x)}{P(x)} \right) \right]  + r_1 r(x)u(t,x) \left(1-\frac{u(t,x)+v(t,x)}{cK(x)} \right), \\ 
&\frac{\partial v(t,x)}{\partial t} = \nabla \cdot \left[ b(x) \nabla \left( \frac{v(t,x)}{Q(x)} \right) \right] + r_2 r(x)v(t,x) \left(1-\frac{u(t,x)+v(t,x)}{K(x)} \right), \\
&t>0, \quad x \in \Omega, \\
&\frac{\displaystyle \partial u}{\displaystyle \partial n} - \frac{\displaystyle u}{\displaystyle P} \frac{\displaystyle 
\partial P}{\displaystyle \partial n}  
=\frac{\displaystyle \partial v}{\displaystyle \partial n}  - 
\frac{\displaystyle v}{\displaystyle Q} \frac{\displaystyle \partial Q}{\displaystyle \partial n} =0,~x\in 
{\partial}{\Omega},\\
& u(0,x)=u_0(x), \;v(0,x)=v_0(x),\;x\in{\Omega}.
\end{aligned}
\end{equation}

Since \eqref{eq:modified} is a monotone 
dynamical system \cite{CC}, as well as all the above systems, 
we will further use a modification of \cite[Theorem B, p. 4087]{Hsu} 
in the form earlier described in \cite[Theorem 17, pp.1203-1204]{Korobenko2014} in detail.   

\begin{Lemma}
\label{lemmaHsu}
Let $(0,0)$ be a repelling equilibrium of \eqref{eq:modified}.
Then exactly one of the following holds:
\begin{description}
\item[(a)] There exists a positive coexistence equilibrium of (\ref{eq:modified}).
\item[(b)] $(u,v)\rightarrow (u^{\ast},0)$ as $t\rightarrow\infty$ for  
$(u_{0},v_{0})$ from a prescribed set.
\item[(c)] $(u,v)\rightarrow (0,v^{\ast})$ as $t\rightarrow\infty$ for 
$(u_{0},v_{0})$ from a prescribed set.
\end{description}
Moreover, if (b) or (c) holds then for every non-trivial initial conditions 
either $(u,v)\rightarrow (u^{\ast},0)$ or $(u,v)\rightarrow (0,v^{\ast})$ as $t\rightarrow\infty$.
\end{Lemma}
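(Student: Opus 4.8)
The plan is to verify that system \eqref{eq:modified} fits the framework of strongly monotone, dissipative semiflows on ordered Banach spaces, so that the trichotomy follows directly from \cite[Theorem B]{Hsu} in the adapted form of \cite[Theorem 17]{Korobenko2014}. First I would fix the phase space $X=C(\overline{\Omega})\times C(\overline{\Omega})$ (or a suitable fractional power space associated with the sectorial operators generated by the weighted divergence-form diffusions under the stated no-flux conditions) and endow it with the \emph{competitive order}, in which $(u_1,v_1)\preceq(u_2,v_2)$ means $u_1\le u_2$ and $v_1\ge v_2$ pointwise. Standard parabolic theory gives a global semiflow on the positive cone, and the key structural observation is that, under this ordering, the competition couplings $-(u+v)/(cK)$ and $-(u+v)/K$ render the semiflow order-preserving, since raising $u$ and lowering $v$ both increase the instantaneous growth of $u$, and symmetrically for $v$. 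This is the classical device converting the competitive system \eqref{eq:modified} into a monotone one, already invoked via \cite{CC}.

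The second step is to upgrade monotonicity to \emph{strong} monotonicity and to secure the compactness and dissipativity demanded by the abstract theorem. For the former, I would linearize along two ordered, distinct trajectories and apply the strong maximum principle together with the Hopf boundary lemma to the uniformly elliptic operators $\nabla\cdot[a\nabla(\cdot/P)]$ and $\nabla\cdot[b\nabla(\cdot/Q)]$; because $a,b,P,Q\in C^{1+p}(\overline{\Omega})$ are strictly positive and the coupling is irreducible, any two ordered initial data that differ become strictly ordered in the interior of the competitive cone for every $t>0$. Compactness is immediate from the parabolic smoothing property, while dissipativity follows from the logistic structure: comparison with the scalar equations shows that each density is eventually dominated by the supremum of its carrying capacity, producing an absorbing set.

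With the semiflow established as strongly monotone, compact and dissipative, the third step is to locate the relevant equilibria. The hypothesis that $(0,0)$ is repelling removes the trivial state from the attractor and, equivalently, forces each single-species subsystem to persist in isolation, so the scalar theory (Lemma~\ref{lem0} and the existence theory for \eqref{semi_u}, \eqref{semi_v}) supplies the semi-trivial equilibria $E_u=(u^*,0)$ and $E_v=(0,v^*)$. In the competitive order these are precisely the endpoints of the order interval $[E_v,E_u]$, and a coexistence state must lie strictly inside it. The abstract theorem then yields the dichotomy between existence of an interior equilibrium and an ordering of $E_u$, $E_v$ by stability, which is exactly the trichotomy (a)--(c); the closing ``moreover'' clause is the monotone-dynamics statement that, once no interior equilibrium exists, the unstable boundary equilibrium repels and every non-trivial orbit is funneled to one of the two semi-trivial states.

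I expect the principal obstacle to be the careful verification of strong monotonicity for the nonstandard weighted divergence-form operators together with their weighted no-flux boundary conditions, since the usual competitive-order arguments are formulated for classical Neumann or Dirichlet Laplacians. One must confirm that the substitutions $u\mapsto u/P$ and $v\mapsto v/Q$ leave the operators in a form to which the strong maximum principle and the Hopf lemma apply, without disturbing the order-preserving property or the smoothness of the boundary data. Once this is secured, the remaining verifications are routine applications of the cited abstract framework.
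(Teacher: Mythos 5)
Your proposal is correct and follows essentially the same route as the paper, which does not prove this lemma at all but simply cites \cite[Theorem B]{Hsu} in the adapted form of \cite[Theorem 17]{Korobenko2014}, after noting (via \cite{CC}) that \eqref{eq:modified} is a monotone dynamical system. Your verification of the competitive order, strong monotonicity via the substitutions $u/P$, $v/Q$ (which turn the weighted no-flux conditions into ordinary Neumann conditions), compactness, dissipativity, and the role of the repelling trivial equilibrium is exactly the content delegated by the authors to those references.
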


\begin{Rem}
Note that by Lemma~\ref{lemmaHsu}, once the trivial equilibrium is a repeller, there is no coexistence 
equilibrium and one of semi-trivial equailibrium solutions is unstable, the other semi-trivial equilibrium is 
globally asymptotically stable.

If zero is a repeller and both semi-trivial equailibrium solutions are unstable, all solutions converge to a 
coexistence equilibrium.
\end{Rem}

We start with auxiliary statements used in the proof of Theorem~\ref{theorem_harv1},
in particular, we use special cases of the results in \cite{BKK2015}.

\begin{Lemma} \cite{BKK2015}
\label{lem1}
If $P$ is proportional to $K$, $\nabla \cdot [b\nabla (K/Q)] \not\equiv 0$ on $\Omega$ and $c\in [1,\infty)$ then the 
semi-trivial equilibrium $(cK,0)$ of system \eqref{eq:modified} is globally asymptotically stable.
\end{Lemma}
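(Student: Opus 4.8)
The plan is to place \eqref{eq:modified} in the framework of monotone (competitive) dynamical systems and invoke Lemma~\ref{lemmaHsu}, thereby reducing global attractivity of $(cK,0)$ to two local facts: the instability of the rival semi-trivial state $(0,v^*)$ and the nonexistence of a coexistence equilibrium. First I would check that $(cK,0)$ is genuinely a steady state: since $P$ is proportional to $K$, the ratio $cK/P$ is constant, so $\nabla\cdot[a\nabla(cK/P)]\equiv 0$, while the logistic term vanishes because $1-cK/(cK)=0$. I would also record that $(0,0)$ is a repeller, each species having a positive per-capita growth rate when rare because the principal eigenvalue of $\nabla\cdot[a\nabla(\cdot/P)]+r_1r$ is positive (as $r>0$ on a subdomain); this is precisely the hypothesis needed to apply Lemma~\ref{lemmaHsu}.

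The central quantitative input is the sign of the invasion eigenvalue of $u$ at $(0,v^*)$. Linearizing the first equation of \eqref{eq:modified} about $(0,v^*)$ produces the operator $\psi\mapsto\nabla\cdot[a\nabla(\psi/P)]+r_1r(1-v^*/(cK))\psi$ under the weighted no-flux condition; the substitution $w=\psi/P$ turns this into a self-adjoint weighted eigenvalue problem with weight $P$, whose principal eigenvalue $\mu_u$ is the supremum of the Rayleigh quotient $\bigl(-\int_\Omega a|\nabla w|^2\,dx+\int_\Omega r_1 r(1-v^*/(cK))Pw^2\,dx\bigr)/\int_\Omega Pw^2\,dx$. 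Testing with $w\equiv 1$ and using $P\propto K$ and $c\ge 1$ bounds the numerator below by a positive multiple of $\int_\Omega r(K-v^*)\,dx$, which is strictly positive by Lemma~\ref{Lsteady2} applied to \eqref{semi_v} with $K_2=K$. Hence $\mu_u>0$ for every $c\ge 1$, so $(0,v^*)$ is unstable and case (c) of Lemma~\ref{lemmaHsu} is ruled out.

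It remains to exclude a coexistence equilibrium $(\tilde u,\tilde v)$ with both components positive. For $c>1$ this is immediate: the invasion eigenvalue of $v$ at $(cK,0)$ is the principal value of $\nabla\cdot[b\nabla(\cdot/Q)]+r_2r(1-c)$, whose potential is $\le 0$ and $<0$ on a subdomain, hence strictly negative, so $(cK,0)$ is stable, $(0,v^*)$ is unstable, and monotone competition theory forbids coexistence. The delicate case is $c=1$, which I expect to be the \emph{main obstacle}: there the potential $r_2r(1-c)$ vanishes, the $v$-invasion eigenvalue is exactly $0$, and linearization is inconclusive, so the hypothesis $\nabla\cdot[b\nabla(K/Q)]\not\equiv 0$ must be used directly. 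Here I would argue by integral identities. Testing the stationary first equation against $P/\tilde u$ (writing $s=\tilde u/P$, integration by parts converts the diffusion term into $\int_\Omega a|\nabla s|^2/s^2\,dx\ge 0$) yields $\int_\Omega r(\tilde u+\tilde v)\,dx\ge\int_\Omega rK\,dx$, with equality only if $\tilde u\propto K$; testing each stationary equation against $1$ gives $\int_\Omega r\tilde u(1-(\tilde u+\tilde v)/K)\,dx=\int_\Omega r\tilde v(1-(\tilde u+\tilde v)/K)\,dx=0$, hence $\int_\Omega r(\tilde u+\tilde v)(1-(\tilde u+\tilde v)/K)\,dx=0$. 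A Cauchy--Schwarz estimate in the measure $r\,dx$ then forces $\int_\Omega r(\tilde u+\tilde v)\,dx=\int_\Omega rK\,dx$ and in fact $\tilde u+\tilde v=K$, whence $\tilde u\propto K$ and therefore $\tilde v=K-\tilde u\propto K$. Substituting $\tilde u+\tilde v=K$ into the analogous identity for the second equation (tested against $Q/\tilde v$) makes its diffusion integral vanish, forcing $\tilde v\propto Q$; combined with $\tilde v\propto K$ this gives $K\propto Q$, i.e. $\nabla\cdot[b\nabla(K/Q)]\equiv 0$, contradicting the hypothesis. Thus no coexistence equilibrium exists, case (a) is excluded, and Lemma~\ref{lemmaHsu} leaves only case (b); together with the instability of $(0,v^*)$ and the repelling nature of $(0,0)$, this gives global asymptotic stability of $(cK,0)$.
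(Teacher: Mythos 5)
Your overall scheme is the one this paper itself uses for its new results (the paper does not reprove Lemma~\ref{lem1}, importing it from \cite{BKK2015}, but its Lemmas~\ref{lem4}, \ref{lem5} and Propositions~\ref{theorem1}, \ref{theorem2} follow exactly the pattern you adopt): check $(0,0)$ is a repeller, show the rival semi-trivial state $(0,v^*)$ is linearly unstable via the Rayleigh quotient with the explicit test function $\psi=P$, exclude coexistence equilibria, and then invoke the trichotomy of Lemma~\ref{lemmaHsu}. Your verification that $(cK,0)$ is a steady state, your instability computation for $(0,v^*)$ (which correctly reduces to $\int_\Omega r(K-v^*)\,dx>0$ via Lemma~\ref{Lsteady2}), and your $c=1$ integral-identity exclusion of interior equilibria are sound and in the spirit of the source.

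The genuine gap is the case $c>1$, which you call ``immediate.'' You argue: $(cK,0)$ is linearly stable, $(0,v^*)$ is unstable, hence ``monotone competition theory forbids coexistence.'' No such theorem exists. Lemma~\ref{lemmaHsu} takes nonexistence of a coexistence equilibrium as an \emph{input} needed to exclude its alternative (a); it cannot be run backwards to deduce nonexistence from the local stability pattern of the semi-trivial states. A strongly monotone competitive system can perfectly well have one linearly stable and one unstable semi-trivial equilibrium together with two interior equilibria (one attracting, one of saddle type) --- the configuration of isoclines crossing twice --- and nothing in the abstract theory rules this out; the exclusion must be proved for this particular system, exactly as you did for $c=1$. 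Note that your $c=1$ identities do not extend verbatim: for $c>1$, writing $w=\tilde u+\tilde v$, the same manipulations (first equation tested against $P/\tilde u$ and against $1$, second against $1$) only yield $\int_\Omega \frac{r}{K}(cK-w)(K-w)\,dx\le 0$, whose integrand is no longer a perfect square, so a further idea (e.g.\ the upper/lower solution argument of Lemma~\ref{lem5}, suitably reversed) is required. A secondary, smaller issue: since the paper assumes only $r\ge 0$ with $r>0$ on a subdomain, your Cauchy--Schwarz equality case gives $\tilde u+\tilde v=K$ only on $\{r>0\}$, and hence $K\propto Q$ only on $\{r>0\}$; to contradict $\nabla\cdot[b\nabla(K/Q)]\not\equiv 0$ on all of $\Omega$ you are implicitly assuming $r>0$ throughout $\Omega$, an assumption you should state.
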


\begin{Lemma} \cite{BKK2015}
\label{lem2}
If $c\in (0,1)$ then the semi-trivial equilibrium $(cK,0)$ of \eqref{eq:modified} is unstable.
\end{Lemma}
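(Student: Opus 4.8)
The plan is to detect instability of $(cK,0)$ through the sign of the principal eigenvalue of the linearization of the $v$-equation at this equilibrium, and to show that eigenvalue is positive by testing its variational characterization against a single constant function.

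First I would record that $(cK,0)$ is genuinely a steady state: with $u=cK$, $v=0$ the logistic factor $1-\frac{cK}{cK}$ vanishes, so the first equation collapses to $\nabla\cdot[a\nabla(cK/P)]=0$, which holds because $P$ is proportional to $K$ and hence $cK/P$ is constant; this is exactly Lemma~\ref{lem0}. Since the full system is monotone, instability of $(cK,0)$ is controlled by whether the second species can invade. Because at $(cK,0)$ the factor $v$ multiplying the reaction term in the $v$-equation vanishes, the linearized $v$-component decouples from the $u$-perturbation, so it suffices to examine the principal eigenvalue $\lambda_1$ of
\begin{equation*}
\nabla\cdot\left[b\nabla\left(\frac{\phi}{Q}\right)\right]+r_2\,r\,(1-c)\,\phi=\lambda\phi,\qquad \frac{\partial(\phi/Q)}{\partial n}=0 \text{ on }\partial\Omega,
\end{equation*}
where I have used $1-\frac{cK}{K}=1-c$. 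A positive principal eigenvalue of this $v$-block alone already forces $(cK,0)$ to be unstable.

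Next I would symmetrize this problem by the substitution $\psi=\phi/Q$, converting it into the self-adjoint weighted eigenvalue problem $\nabla\cdot[b\nabla\psi]+r_2\,r\,(1-c)\,Q\psi=\lambda\,Q\psi$ with the pure Neumann condition $\partial\psi/\partial n=0$. For this form the principal eigenvalue enjoys the Rayleigh-quotient characterization
\begin{equation*}
\lambda_1=\sup_{\psi\not\equiv 0}\frac{\int_\Omega\left[-b|\nabla\psi|^2+r_2\,r\,(1-c)\,Q\psi^2\right]dx}{\int_\Omega Q\psi^2\,dx}.
\end{equation*}
Testing with the constant $\psi\equiv1$ annihilates the gradient term and gives $\lambda_1\ge \frac{r_2(1-c)\int_\Omega rQ\,dx}{\int_\Omega Q\,dx}$, which is strictly positive because $r_2>0$, $c<1$, $Q>0$ on $\overline{\Omega}$, and $r\ge0$ is positive on a nonempty open subdomain. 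Hence $\lambda_1>0$, and $(cK,0)$ is unstable.

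The main obstacle is not the eigenvalue estimate, which is immediate from the constant test function, but justifying the two standard reductions cleanly: that a positive principal eigenvalue of the $v$-linearization implies nonlinear instability of the semi-trivial equilibrium in the full system, and that the substitution $\psi=\phi/Q$ legitimately recasts the operator as a self-adjoint weighted problem to which Krein--Rutman theory applies, furnishing a simple principal eigenvalue with positive eigenfunction together with the variational formula above. Both facts belong to the monotone-dynamical-systems and eigenvalue framework already used in the paper (cf. Lemma~\ref{lemmaHsu} and \cite{CC,Korobenko2014}), so I would invoke these rather than reprove them, and merely verify that the boundary term arising from integration by parts vanishes under the transformed Neumann condition.
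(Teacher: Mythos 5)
Your proof is correct and follows essentially the same route the paper uses for its analogous instability results (Lemmas~\ref{lem4}, \ref{lem7} and \ref{lem8}): the paper itself defers this particular lemma to \cite{BKK2015}, but its standard technique is exactly yours --- linearize at the semi-trivial equilibrium, use the variational characterization of the principal eigenvalue from \cite{CC}, and test with a function that annihilates the diffusion term. Note that your constant test function $\psi\equiv 1$ in the symmetrized problem is the same as testing with $\phi=Q$ in the original formulation, which is precisely the choice made in the paper's proof of Lemma~\ref{lem7}.
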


\begin{Lemma}  \cite{BKK2015}
\label{lem3}
Let $K_1$ and $K_2$ be positive on $\overline{\Omega}$, $r$ be nonnegative in $\overline{\Omega}$ and positive
in an open nonempty subdomain of $\Omega$, $r_1>0$, $r_2>0$. Then the trivial equilibrium $(0,0)$
of \eqref{eq:main_modified} is a repeller.
\end{Lemma}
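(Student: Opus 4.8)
The plan is to establish that $(0,0)$ is a repeller by linearizing \eqref{eq:main_modified} at the origin and showing that the two resulting scalar operators each have a \emph{positive} principal eigenvalue; the repelling property then follows from the standard correspondence between a positive principal eigenvalue and linear instability in the monotone-systems framework underlying Lemma~\ref{lemmaHsu}. Because the competition terms are quadratic near the origin, the Jacobian of \eqref{eq:main_modified} at $(0,0)$ is block diagonal and the linearization decouples into
\[ \phi_t = \nabla \cdot [a \nabla(\phi/P)] + r_1 r(x)\phi, \qquad \psi_t = \nabla \cdot [b \nabla(\psi/Q)] + r_2 r(x)\psi, \]
each with the no-flux condition inherited from \eqref{eq:main_modified}. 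Writing $L_u$, $L_v$ for the two elliptic operators and $\sigma_u$, $\sigma_v$ for their principal eigenvalues (those admitting positive eigenfunctions), it suffices to prove $\sigma_u>0$ and $\sigma_v>0$; the positivity of $K_1,K_2$ enters only to guarantee that the discarded nonlinear terms are genuinely of higher order.

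The key step is to make the sign of $\sigma_u$ transparent via a weighted self-adjoint reformulation. With respect to the inner product $\langle f,g\rangle_P=\int_\Omega fg\,P^{-1}\,dx$, integration by parts together with the boundary condition $\partial(\phi/P)/\partial n=0$ makes $L_u$ self-adjoint, the boundary term vanishing precisely because of the flux condition in \eqref{eq:main_modified}. Its principal eigenvalue is then the supremum of the Rayleigh quotient
\[ \mathcal{R}[\phi]=\frac{-\int_\Omega a\,|\nabla(\phi/P)|^2\,dx + r_1\int_\Omega r\,\phi^2 P^{-1}\,dx}{\int_\Omega \phi^2 P^{-1}\,dx}, \]
and I would test it with $\phi=P$, which is admissible and makes $\nabla(\phi/P)=\nabla 1=0$, to obtain
\[ \sigma_u \ge \frac{r_1\int_\Omega rP\,dx}{\int_\Omega P\,dx} > 0. \]
Positivity uses $r_1>0$, $P>0$ on $\overline\Omega$, and $r\ge 0$ with $r>0$ on a nonempty open subset, so that $\int_\Omega rP\,dx>0$. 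The identical argument with weight $Q^{-1}$ and test function $\psi=Q$ gives $\sigma_v\ge r_2\int_\Omega rQ\,dx/\int_\Omega Q\,dx>0$.

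Finally I would convert the two positive principal eigenvalues into the repelling property: each $\sigma_u,\sigma_v>0$ produces a growing mode of the linearized flow with a strictly positive eigenfunction, so $(0,0)$ is linearly unstable to perturbations of either component, and since the interaction terms cannot stabilise an invading rare species, the origin admits a fixed neighbourhood that every nontrivial orbit eventually leaves and never re-enters. I expect the main obstacle to be this last passage rather than the eigenvalue computation: the sign of $\sigma_u$ is an elementary variational estimate once the weighted self-adjoint structure is in hand, but turning linear instability into a \emph{uniform} repeller requires the persistence/instability theory for parabolic systems. A subtle point to respect there is that positivity is driven by $\int_\Omega rP\,dx>0$ rather than by $r$ pointwise: it is the diffusion carrying mass into the region $\{r>0\}$, encoded in the everywhere-positive eigenfunction $P$, that forbids the degenerate scenario in which a crude mass balance concentrated on $\{r=0\}$ would fail to detect growth.
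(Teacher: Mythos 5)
This paper does not actually prove Lemma~\ref{lem3} --- it imports it from \cite{BKK2015} --- but your argument is the standard one used there and is precisely the technique this paper applies in its own instability proofs (Lemmas~\ref{lem4} and~\ref{lem8}): the coupling terms are quadratic, so the linearization at $(0,0)$ decouples; each principal eigenvalue has the weighted variational characterization of the form \eqref{eig_p1a}; and testing with $\psi=P$ (resp.\ $\psi=Q$), which annihilates the gradient term, gives $\sigma_u\geq r_1\int_\Omega rP\,dx\left/\int_\Omega P\,dx\right.>0$ (resp.\ $\sigma_v>0$), the positivity resting exactly on $r\geq 0$, $r>0$ on an open subset, as you note. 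Your final passage from the two positive principal eigenvalues to the uniform repelling property, via comparison with the linearized flow in a neighbourhood small enough that the quadratic competition terms only shave an $\varepsilon$ off the growth rate, is likewise how the cited source concludes, so the proposal is correct and takes essentially the same route.
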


{\bf 
However, in the case $c\in (0,1)$, the paper \cite{BKK2015} does not give an answer whether the two species would eventually 
coexist, or only the second one survives while the first one goes extinct. 
In \cite{BKK2015}, numerical simulation illustrated that for $c<1$ close enough to one, there is coexistence, while for $c$ sufficiently small, the semi-trivial equilibrium $(0,v^*)$ is globally asymptotically stable. Here we prove existence of such $c^*$  that coexistence is inevitable for any $c \in (c^*,1)$.
}

\begin{Lemma}
\label{lem4}
Let $P$ be proportional to  $K$, while $\nabla \cdot \left[ b(x) \nabla \left( \frac{K(x)}{Q(x)} \right) \right] \not\equiv 0$ on $\Omega$. 
Then there exists $c_1 \in (0,1)$ such that for $c \in (c_1 ,1)$, the semi-trivial equilibrium $(0,v^*)$ of \eqref{eq:modified}, where $v^*$ satisfies \eqref{semi_v} with $K_2=K$, is unstable.
\end{Lemma}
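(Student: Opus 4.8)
The plan is to reduce the stability analysis of $(0,v^*)$ to the sign of a single principal eigenvalue and then to control that sign by a constant test function. Linearizing \eqref{eq:modified} about the semi-trivial equilibrium $(0,v^*)$ and writing the perturbation of the first (currently extinct) component as $\phi$, the coupling term $r_1 r u (\phi+\psi)/(cK)$ vanishes at $u=0$, so the $u$-direction of the linearization decouples and its stability is governed by the principal eigenvalue $\lambda_1 = \lambda_1(c)$ of
\begin{equation}
\nabla \cdot \left[ a(x)\nabla\left(\frac{\phi}{P}\right)\right] + r_1 r(x)\left(1 - \frac{v^*}{cK}\right)\phi = \lambda\phi, \qquad \frac{\partial(\phi/P)}{\partial n} = 0 \ \text{on}\ \partial\Omega .
\end{equation}
The equilibrium $(0,v^*)$ is unstable precisely when $\lambda_1(c)>0$, so it suffices to exhibit a range of $c$ for which $\lambda_1(c)>0$.

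First I would symmetrize the eigenvalue problem via the substitution $\phi = P\eta$, which turns it into the self-adjoint weighted problem $\nabla\cdot[a\nabla\eta] + m(x)P\eta = \lambda P\eta$ with pure Neumann condition $\partial\eta/\partial n = 0$, where $m = r_1 r(1 - v^*/(cK))$ and the weight $P>0$. Its principal (largest) eigenvalue admits the variational characterization
\begin{equation}
\lambda_1(c) = \sup_{\eta \neq 0} \frac{\int_\Omega \left[-a|\nabla\eta|^2 + mP\eta^2\right]dx}{\int_\Omega P\eta^2\,dx}.
\end{equation}
Testing with the admissible constant $\eta \equiv 1$ annihilates the gradient term and yields the clean lower bound $\lambda_1(c) \geq \left(\int_\Omega mP\,dx\right)\big/\left(\int_\Omega P\,dx\right)$.

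Next I would invoke the hypothesis that $P$ is proportional to $K$, say $P = \theta K$ with $\theta>0$, so that the numerator simplifies to $\int_\Omega mP\,dx = \theta r_1 \int_\Omega r\left(K - v^*/c\right)dx$. Hence $\lambda_1(c)>0$ is guaranteed as soon as $c\int_\Omega r K\,dx > \int_\Omega r v^*\,dx$. Since $v^*$ solves \eqref{semi_v} with $K_2=K$ and the assumption $\nabla\cdot[b\nabla(K/Q)]\not\equiv 0$ excludes $v^*\equiv K$, Lemma~\ref{Lsteady2} gives the strict inequality $\int_\Omega r K\,dx > \int_\Omega r v^*\,dx$. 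Setting
\begin{equation}
c_1 := \frac{\int_\Omega r v^*\,dx}{\int_\Omega r K\,dx} \in (0,1),
\end{equation}
I conclude $\lambda_1(c)>0$ for every $c \in (c_1,1)$, which is exactly the claimed instability.

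The genuinely delicate point is the first step: justifying that the linearized stability of the semi-trivial state is captured precisely by this scalar principal eigenvalue. This rests on the block-triangular structure of the linearization at $(0,v^*)$ (the diagonal $v$-block being the stable linearization of a single-species logistic steady state, while the off-diagonal coupling is irrelevant to the sign of the spectral bound for the $u$-direction) together with the standard principal-eigenvalue theory for weighted, divergence-form operators under the transformed Neumann condition; I would cite the framework of \cite{CAMWA2016,Korobenko2014}. The remaining steps — the symmetrizing substitution, the variational bound, and the application of Lemma~\ref{Lsteady2} — are then routine.
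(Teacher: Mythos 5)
Your proof is correct and follows essentially the same route as the paper's: the paper also reduces instability of $(0,v^*)$ to positivity of the principal eigenvalue of the linearized $u$-equation, uses the variational (Rayleigh quotient) characterization from \cite{CC}, tests with $\psi$ proportional to $K$ (equivalent to your constant $\eta\equiv 1$ after the substitution $\phi=P\eta$), and invokes Lemma~\ref{Lsteady2} to obtain the same threshold $c^*=\int_\Omega r v^*\,dx\big/\int_\Omega rK\,dx<1$. The only difference is cosmetic: the paper first gives a cruder bound involving the principal eigenfunction at $c=1$ before presenting this same simpler estimate, which you go to directly.
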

\begin{proof}
For the linearization of the first equation in \eqref{eq:modified} around the equilibrium $(0,v^*)$
$$
\frac{\partial u(t,x)}{\partial t} = \nabla \cdot \left[ a(x) \nabla \left( \frac{u(t,x)}{P(x)} \right) \right]  + r_1 r(x)u(t,x) \left(1-\frac{v^*(x)}{cK(x)} \right),
$$
consider the associated eigenvalue problem 
\begin{equation}
\label{eig_p1}
\nabla \cdot \left[ a \nabla \left(\frac{\displaystyle \psi }{\displaystyle P}\right) \right]
+r_1 r \psi  \left(1-\frac{\displaystyle v^{\ast}}{\displaystyle c K(x)}\right)=\sigma \psi,\; x\in \Omega,\;
 \frac{\displaystyle \partial (\psi/P)}{\displaystyle \partial n}=0,\; x\in\partial\Omega.
\end{equation}
According to \cite{CC}, the principal eigenvalue $\sigma_1$ of (\ref{eig_p1}) can be computed as 
\begin{equation}
\label{eig_p1a}
\sigma_1 =
\sup_{\psi \neq 0, \psi\in W^{1,2}} \left. \left[
-\int \limits_\Omega a |\nabla (\psi/P)|^2\,dx
+\int \limits_\Omega r_1 r  \frac{\psi^2}{P} \left(1-\frac{\displaystyle v^{\ast}}{\displaystyle cK}\right)\,dx\right]
\right/
\int \limits_\Omega \frac{\psi^2}{P}\,dx.
\end{equation}
Let $c=1$, $\sigma_1$ be the principal eigenvalue, and  $\Psi$ be the principal eigenfunction, then 
$$
-\int \limits_\Omega a |\nabla (\Psi/P)|^2\,dx
+\int \limits_\Omega r_1 r \frac{\Psi^2}{P} \left(1-\frac{\displaystyle v^{\ast}}{\displaystyle K}\right)\,dx
-\sigma_1 \int \limits_\Omega \frac{\Psi^2}{P}\,dx = 0.
$$
We have 
$$0< \int \limits_\Omega \frac{\Psi^2}{P} r_1 r \frac{v^*}{K}\, dx = -\int \limits_\Omega a |\nabla (\Psi/P)|^2\,dx
+\int \limits_\Omega r_1 r \frac{\Psi^2}{P} \,dx
-\sigma_1 \int \limits_\Omega \frac{\Psi^2}{P}\,dx.
$$ 
Note that $v^*$ does not depend on $c$, and for 
$$
c> c_1 :=\int \limits_\Omega \frac{\Psi^2}{P} r_1 r \frac{v^*}{K}\, dx \left[ -\int \limits_\Omega a |\nabla (\Psi/P)|^2\,dx
+\int \limits_\Omega r_1 r(x) \frac{\Psi^2}{P} \,dx
\right]^{-1},
$$
the principal eigenvalue of \eqref{eig_p1} is positive, since the substitution of $\psi=\Psi$ in the brackets of \eqref{eig_p1a} gives a positive number. 

Further, we construct a simpler estimate of $c$ guaranteeing coexistence, not involving $\Psi$.

Choosing $\psi(x)= K(x)$, denoting
$\displaystyle
M:=\int \limits_\Omega  K(x) \,dx ,
$
and recalling that $P$ is proportional to $K$,
we observe by Lemma~\ref{Lsteady2} that the principal eigenvalue satisfies
$$
\sigma_1 \geq \frac{r_1}{M} \int \limits_\Omega r(x) K(x)\left(1-\frac{v^{\ast}(x)}{cK(x)}\right) 
\,dx = \frac{r_1}{M} \left[ \int \limits_\Omega r(x) K(x)\, dx - \frac{1}{c} \int \limits_\Omega r(x) v^*(x) \, dx \right]>0
$$
for $c\in (c^*,1)$, where
\begin{equation}
\label{eq_c_ast}
c^* = \frac{\int \limits_\Omega r(x) v^*(x) \, dx}{\int \limits_\Omega r(x) K(x)\, dx}<1.
\end{equation}
Therefore, for $c\in (c^*,1)$, $\sigma_1$ is positive, and thus
the semi-trivial steady state $(0,v^*)$ of (\ref{eq:modified}) is unstable.
\end{proof}

All the above results on local instability of semi-trivial equilibrium solutions and the fact that the trivial equilibrium is a repeller, by Lemma~\ref{lemmaHsu}, imply the following result. 

\begin{Prop}
\label{theorem1} 		
Let $P$ be proportional to  $K$, while $\displaystyle \nabla \cdot \left[ b(x) \nabla \left( \frac{K}{Q} \right) \right] \not\equiv 0$ on $\Omega$. 
{\bf
There exists $c_1 \in (0, 1)$ such that $c_1 \leq c^*$, where $c^*$ is defined in \eqref{eq_c_ast}, and
for $c \in (c_1,1)$ all solutions of
\eqref{eq:modified} strongly persist. 
}
\end{Prop}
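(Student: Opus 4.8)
The plan is to combine the instability of the two semi-trivial equilibria with the fact that the origin is a repeller, and then read off the conclusion from the monotone-systems trichotomy. Since \eqref{eq:modified} is a monotone system and, by Lemma~\ref{lem3}, its trivial equilibrium $(0,0)$ is a repeller, the Remark following Lemma~\ref{lemmaHsu} reduces the task to showing that \emph{both} semi-trivial equilibria are unstable for $c$ in a suitable left-neighborhood of $1$: in that case alternatives (b) and (c) of Lemma~\ref{lemmaHsu} are excluded, a coexistence equilibrium exists, and all solutions converge to it, which is exactly strong persistence.

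First I would record that, by Lemma~\ref{lem2}, the equilibrium $(cK,0)$ is unstable for every $c\in(0,1)$, so this factor imposes no restriction and the entire content lies in the instability of $(0,v^*)$. For the latter I would work with the variational characterization \eqref{eig_p1a} of the principal eigenvalue $\sigma_1=\sigma_1(c)$ of the linearization \eqref{eig_p1}. The key structural observation is that, for each fixed test function $\psi$, the Rayleigh quotient in \eqref{eig_p1a} is nondecreasing in $c$, since the only $c$-dependent term enters through $1-v^*/(cK)$, which increases with $c$ because $v^*,K>0$ and $r\ge 0$; it is in fact strictly increasing as $rv^*>0$ on a set of positive measure. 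Hence $\sigma_1(c)$ is strictly increasing, and the set $\{c\in(0,1):\sigma_1(c)>0\}$ is an interval of the form $(c_1,1)$ on which $(0,v^*)$ is unstable.

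Next I would locate this threshold relative to $c^*$. The computation in the proof of Lemma~\ref{lem4} with the test function $\psi=K$ (for which $\nabla(K/P)\equiv 0$ because $P\propto K$) already gives $\sigma_1(c)>0$ whenever $c\in(c^*,1)$, with $c^*$ as in \eqref{eq_c_ast}. Thus $(c^*,1)\subseteq\{\sigma_1>0\}$, which forces $c_1\le c^*$. Conversely, for $c$ small enough one has $1-v^*/(cK)<0$ throughout $\Omega$, so every Rayleigh quotient is negative and $\sigma_1(c)<0$; this yields $c_1>0$. Hence $c_1\in(0,1)$ with $c_1\le c^*$, and by the monotonicity above $(0,v^*)$ is unstable for all $c\in(c_1,1)$.

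Finally, for each $c\in(c_1,1)$ the origin is a repeller while both $(cK,0)$ and $(0,v^*)$ are unstable; by Lemma~\ref{lemmaHsu} the alternatives of convergence to a single semi-trivial equilibrium are ruled out, so a positive coexistence equilibrium exists and all solutions strongly persist, exactly as stated in the Remark after Lemma~\ref{lemmaHsu}. The main obstacle, I expect, is not any single estimate but the clean packaging of the threshold: one must use the monotonicity of $\sigma_1(c)$ to upgrade the explicit but possibly non-sharp sufficient range $c\in(c^*,1)$ coming from $\psi=K$ into instability on the full interval $(c_1,1)$, while simultaneously certifying $c_1\in(0,1)$ and $c_1\le c^*$; the remaining dynamical conclusion is then a direct appeal to the monotone-systems trichotomy.
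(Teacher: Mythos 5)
Your proposal is correct, and its skeleton is the paper's: instability of $(cK,0)$ for all $c\in(0,1)$ (Lemma~\ref{lem2}), instability of $(0,v^*)$ on a left-neighbourhood of $c=1$, the repelling origin (Lemma~\ref{lem3}), and the trichotomy of Lemma~\ref{lemmaHsu} with its Remark; in both arguments the bound $c_1\le c^*$ comes from the same test function $\psi=K$ in \eqref{eig_p1a} together with Lemma~\ref{Lsteady2}. The genuine difference is how the interval of instability of $(0,v^*)$ is produced. The paper (proof of Lemma~\ref{lem4}) fixes $c=1$, takes the principal eigenfunction $\Psi$ there, and uses $\psi=\Psi$ as a test function for general $c$, which gives the explicit threshold
\begin{equation*}
c_1=\left.\int_\Omega r_1 r\,\frac{\Psi^2}{P}\,\frac{v^*}{K}\,dx\right/\left[-\int_\Omega a\,\bigl|\nabla(\Psi/P)\bigr|^2\,dx+\int_\Omega r_1 r\,\frac{\Psi^2}{P}\,dx\right],
\end{equation*}
and which implicitly relies on $\sigma_1>0$ at $c=1$ (the instability of $(0,v^*)$ in the non-harvested case, going back to \cite{Korobenko2014}) to ensure the denominator exceeds the numerator, i.e. $c_1<1$. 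You instead observe that for each fixed $\psi$ the numerator of \eqref{eig_p1a} is nondecreasing in $c$, hence so is $\sigma_1(c)$, define $c_1$ as the infimum of $\{c:\sigma_1(c)>0\}$, and then read off $c_1\le c^*<1$ because this set contains $(c^*,1)$ by the $\psi=K$ computation, and $c_1>0$ because $1-v^*/(cK)<0$ on $\Omega$ for small $c$. Your route is self-contained (no appeal to the $c=1$ eigenfunction or to instability at $c=1$) and yields the sharp threshold; the paper's yields a second explicit, in principle computable, constant alongside $c^*$. Two small points to polish, neither a gap: the Rayleigh quotient is only \emph{nondecreasing} in $c$ for test functions $\psi$ supported where $r\equiv 0$ (strict increase of $\sigma_1$ follows by testing at the positive principal eigenfunction), but a supremum of nondecreasing functions is nondecreasing, which is all your argument uses; and for small $c$ the pointwise negativity of every quotient gives $\sigma_1(c)\le 0$ directly (strict negativity needs attainment of the supremum at a positive eigenfunction), which already keeps such $c$ out of the instability set and hence suffices for $c_1>0$.
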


\begin{Lemma}
\label{lem5}
Let $P$ be proportional to  $K_1=K$, let $\displaystyle \nabla \cdot \left[ b(x) \nabla \left( \frac{K}{Q} \right) \right] \not\equiv 0$ on $\Omega$. 
There exists $c_2 \in (0,1)$ such that whenever $c \in(0, c_2)$, system \eqref{eq:modified}
has no 
coexistence equilibrium.
\end{Lemma}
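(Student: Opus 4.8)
The plan is to deduce the non-existence of a coexistence equilibrium from the monotone dynamical systems dichotomy of Lemma~\ref{lemmaHsu}. Since \eqref{eq:modified} is the special case of \eqref{eq:main_modified} with $K_1=cK$, $K_2=K$, Lemma~\ref{lem3} guarantees that $(0,0)$ is a repeller, and Lemma~\ref{lem2} gives that the semi-trivial equilibrium $(cK,0)$ (which is the unique $u$-only steady state by Lemma~\ref{lem0}, as $P\propto K\propto cK$) is unstable for every $c\in(0,1)$. By the Remark following Lemma~\ref{lemmaHsu}, it then suffices to show that the competing semi-trivial equilibrium $(0,v^*)$ is locally asymptotically stable for all sufficiently small $c$: this rules out case (b), and, being incompatible with coexistence, excludes case (a) as well, forcing global convergence to $(0,v^*)$.

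To analyse the stability of $(0,v^*)$, I would linearize the first equation of \eqref{eq:modified} about $(0,v^*)$ and examine the principal eigenvalue $\sigma_1(c)$ of $\nabla\cdot[a\nabla(\psi/P)]+r_1 r\psi(1-v^*/(cK))=\sigma\psi$ with $\partial(\psi/P)/\partial n=0$ on $\partial\Omega$, using the variational formula of the type \eqref{eig_p1a}; the equilibrium is stable precisely when $\sigma_1(c)<0$. The crucial structural fact is that $v^*$ solves \eqref{semi_v} with $K_2=K$ and is therefore independent of $c$, while the penalizing factor $v^*/(cK)$ blows up as $c\to 0^+$. Setting $m:=\min_{\overline{\Omega}}(v^*/K)>0$ (positive since $v^*>0$ and $K>0$ on the compact set $\overline{\Omega}$), the pointwise bound $1-v^*/(cK)\le 1-m/c$ together with $r\ge 0$ yields, for every admissible $\psi$, the Rayleigh-quotient estimate $R_c[\psi]\le (1-m/c)\,T[\psi]$, where $T[\psi]=\big(\int_\Omega r_1 r\,\psi^2/P\big)\big/\big(\int_\Omega \psi^2/P\big)\ge 0$ and the gradient term has been discarded as nonpositive.

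The main obstacle is that the supremum of $(1-m/c)\,T[\psi]$ over all test functions is merely $\le 0$, since $T[\psi]$ can be driven to $0$ by concentrating $\psi$ where $r$ vanishes; the crude bound does not yet deliver $\sigma_1(c)<0$. I would resolve this by evaluating the estimate at the actual maximizer rather than taking a blanket supremum. By \cite{CC}, the supremum is attained at the principal eigenfunction $\Psi_c$, which may be taken strictly positive on $\Omega$; since $r>0$ on an open nonempty subdomain, $\int_\Omega r_1 r\,\Psi_c^2/P>0$, so $T[\Psi_c]>0$ strictly. Hence, for $c<m$ (where $1-m/c<0$) we obtain $\sigma_1(c)=R_c[\Psi_c]\le (1-m/c)\,T[\Psi_c]<0$, so $(0,v^*)$ is locally asymptotically stable. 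Choosing $c_2:=\min\{m,\tfrac12\}\in(0,1)$ makes all of the above valid for $c\in(0,c_2)$; combining the stability of $(0,v^*)$ with the instability of $(cK,0)$ and the repelling nature of $(0,0)$ through Lemma~\ref{lemmaHsu} then shows that \eqref{eq:modified} has no coexistence equilibrium for such $c$.
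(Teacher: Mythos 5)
Your eigenvalue computation is sound: for $c<m:=\min_{\overline\Omega}(v^*/K)$, estimating the Rayleigh quotient at the positive principal eigenfunction (rather than taking a blanket supremum) does give $\sigma_1(c)<0$, so $(0,v^*)$ is linearly stable against invasion by $u$. The gap is in the logical step that follows. Lemma~\ref{lemmaHsu} asserts that \emph{exactly one} of (a), (b), (c) holds, so to exclude (a) you must \emph{establish} (b) or (c), each of which is a global convergence statement for a prescribed set of initial data. Local asymptotic stability of $(0,v^*)$ does rule out (b), as you say, but it is not ``incompatible with coexistence'': in a monotone competitive system one can have $(cK,0)$ unstable, $(0,v^*)$ locally stable, and a pair of interior equilibria (e.g.\ a saddle and a node created by a saddle--node bifurcation of the nullcline intersections); this configuration violates neither the trichotomy (case (a) then holds while (b) and (c) fail) nor any of the lemmas you invoke. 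So the assertion that stability of $(0,v^*)$ plus instability of $(cK,0)$ forces nonexistence of a coexistence equilibrium is precisely the content that needs proof, and your argument supplies none. Note that the paper's Remark after Lemma~\ref{lemmaHsu} uses the implication only in the opposite direction: nonexistence of a coexistence equilibrium, plus instability of one semi-trivial state, yields global attractivity of the other.

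The paper proves Lemma~\ref{lem5} directly, with no appeal to Lemma~\ref{lemmaHsu}, by an upper/lower solution sandwich: assume a coexistence equilibrium $(u_s,v_s)$ exists and $c<b/(2M)$, where $b=\min_{\overline\Omega} v^*$ and $M=\max_{\overline\Omega} K$. Since $\overline u=bK/(2M)$ makes the diffusion term vanish ($\overline u/P$ is constant) and makes the reaction term negative, it is an upper solution of the first equation, giving $u_s\le b/2\le v^*/2$ pointwise; then $\underline v=v^*/2$ is a lower solution of the second equation, giving $v_s\ge v^*/2$; feeding this back into the first equation makes its reaction term strictly negative wherever $u_s\not\equiv 0$, and integrating over $\Omega$ with the no-flux boundary conditions yields a sign contradiction. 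If you wanted to salvage your route, you would have to upgrade local stability of $(0,v^*)$ to the global statement (c), which in this framework essentially presupposes the nonexistence of interior equilibria you are trying to prove; the direct PDE argument is what actually does the work.
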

	
\begin{proof}
Everywhere we assumed $K(x)\in C^{1+p}(\overline \Omega)$, $K(x)>0$ for all $x\in \overline \Omega$. These assumptions on $K(x)$ imply existence of positive lower and upper bounds for $K$ 
\begin{equation}
\label{eq:K_bounds}
0<m:=\min_{x\in\overline \Omega }K(x),\, M:=\max_{x\in\overline \Omega }K(x).
\end{equation}
By \cite[Appendix Proposition 4]{Korobenko2013} and \cite[Propositions 3.2 and 3.3]{CC}, there exist similar positive bounds for a positive stationary solution $v^*(x)$ of \eqref{semi_v}
\begin{equation}
\label{eq:v_star_bounds}
0<b:=\min_{x\in\overline \Omega }v^*(x),\, B:= \max_{x\in\overline \Omega }v^*(x).
\end{equation}
We have $b \leq M$; assuming the contrary, we obtain that $v^*(x)>K(x)$ for any $x$, thus integration of \eqref{semi_v} with $K_2 \equiv K$ over $\Omega$ leads to a contradiction that the integral of a negative on $\Omega$ function equals zero. 	Thus $b/(2M)<1$; 	
let us choose $c \in\left(0,\frac{b}{2M} \right)$. Let $(u_s,v_s)$ be a coexistence solution of \eqref{eq:modified}. 
Substituting $\overline{u} = \frac{bK(x)}{2M}$ into the first equation in \eqref{eq:modified}, assuming $v_s\geq 0$ and taking into account that the diffusion term vanishes
as $\overline{u}/P$, or $\overline{u}/K$, is constant, we obtain
$$
r(x)\overline{u} \left(1- \frac{\overline{u}+v_s(x)}{cK(x)} \right)\leq  r(x)\overline{u} \left(1- \frac{\overline{u}}{cK(x)} \right) =  r(x)\overline{u} \left( 1- \frac{b}{2cM}\right)<0
$$
for any $x \in \Omega$ since $c< \frac{b}{2M}$.
Thus $\overline{u} = \frac{bK}{2M}$ is an upper solution of the equation, see \cite[Chapter 8, Definition 1.2]{Pao}. 
We can choose zero as a lower solution and immediately get by \eqref{eq:K_bounds}, \eqref{eq:v_star_bounds},
\begin{equation}
\label{eq:u_s_bounds}
0\leq u_s(x)\leq  \frac{bK(x)}{2M} 
\leq \frac{b}{2} \leq \frac{v^*(x)}{2}.
\end{equation}
Choosing $\underline{v} = \frac{v^*}{2}$ in the second equation of \eqref{eq:modified}, we can see that it is a lower solution (see \cite[Chapter 8, Definition 1.2]{Pao}), since $v^*$ is a solution of \eqref{semi_v} with $K_2 \equiv K$:
$$
\nabla \cdot \left[ b(x) \nabla \left( \frac{\underline{v}}{Q} \right) \right] + 	
r_2 r\underline{v}  \left(1-\frac{u_s+\underline{v} }{K} \right) \geq 
\frac{1}{2} \nabla \cdot \left[ b(x) \nabla \left( \frac{v^*}{Q} \right) \right]
+ r_2 r\underline{v} \left(1-\frac{0.5 v^*+ 0.5v^*}{K} \right) = 0.
$$
Therefore, $v_s\geq \frac{v^*}{2}$. Finally, using this inequality together with the fact that $u_s\geq 0$ in the first equation of \eqref{eq:modified}, we get, due to $c<\frac{B}{2M}$,
\begin{align*}
0= &\nabla \cdot \left[ a(x) \nabla \left( \frac{u_s(x)}{P(x)} \right) \right] +  
r_1r(x)u_s(x) \left(1-\frac{u_s(x)+v_s(x)}{cK(x)} \right)
\\ \leq &\nabla \cdot \left[ a(x) \nabla \left( \frac{u_s(x)}{P(x)} \right) \right] + r_1 r(x)u_s(x)  
\left(1-\frac{v_s(x)}{cK(x)} \right) 
\\ \leq &\nabla \cdot \left[ a(x) \nabla \left( \frac{u_s(x)}{P(x)} \right) \right] + 
r_1 r(x)u_s(x) \left(1 -\frac{v^*(x)}{2c K(x)} \right) \\ \leq 
& \nabla \cdot \left[ a(x) \nabla \left( \frac{u_s(x)}{P(x)} \right) \right]  + r_1 r(x)u_s(x) \left( 1 -\frac{b}{2cM} \right) \leq \left[ a(x) \nabla \left( \frac{u_s(x)}{P(x)} \right) \right],
		\end{align*}
where the strict inequality is valid in the first and the last cases, unless $u_s(x)\equiv 0$. Integrating both sides of the inequality above, using the boundary conditions and assuming $u_s \not\equiv 0$ leads to
		\begin{align*}
		0 = & \int_{\Omega} \left( \nabla \cdot \left[ a(x) \nabla \left( \frac{u_s(x)}{P(x)} \right) \right] \right)dx + \int_{\Omega} r_1 r(x)u_s(x) \left(1-\frac{u_s(x)+v_s(x)}{cK(x)} \right)dx \\  < & \int_{\Omega}  \left( \nabla \cdot \left[ a(x) \nabla \left( \frac{u_s(x)}{P(x)} \right) \right] \right)dx =0,
		\end{align*}
		which is a contradiction. Hence, there is no coexistence equilibrium.
	\end{proof}

Now, Lemma \ref{lemmaHsu} leads to the global stability of $(0,v^*)$, as stated below.

\begin{Prop}
\label{theorem2} 		
Let $P$ be proportional to  $K$, while $\displaystyle \nabla \cdot \left[ b(x) \nabla \left( \frac{K}{Q} \right) \right] \not\equiv 0$ on $\Omega$. 
There exists $c_2 \in (0, 1)$ such that for $c \in (0,c_2)$ all solutions of
\eqref{eq:modified} converge to the semi-trivial equilibrium $(0,v^*)$.
\end{Prop}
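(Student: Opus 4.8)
The plan is to synthesize the instability and non-coexistence facts already established and feed them into the trichotomy of Lemma~\ref{lemmaHsu}. First I would observe that system~\eqref{eq:modified} is a particular case of \eqref{eq:main_modified}, obtained by taking the proportional growth rates together with $K_1=cK$ and $K_2=K$. Hence Lemma~\ref{lem3} applies directly and, under the standing positivity hypotheses on $K$ and $r$, the trivial equilibrium $(0,0)$ is a repeller. This rules out total extinction and activates the alternatives of Lemma~\ref{lemmaHsu}.

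Next I would invoke the freshly proved Lemma~\ref{lem5}, which supplies a threshold $c_2\in(0,1)$ such that for every $c\in(0,c_2)$ the system \eqref{eq:modified} has no positive coexistence equilibrium. This is precisely the negation of alternative (a) in Lemma~\ref{lemmaHsu}. Therefore, for such $c$, we are forced into case (b) or (c), and by the Remark following Lemma~\ref{lemmaHsu} exactly one of the two semi-trivial equilibria is globally asymptotically stable while the other is unstable. The two candidates are $(cK,0)$, which is the unique semi-trivial $u$-equilibrium by Lemma~\ref{lem0} since $P$ is proportional to $K_1=cK$, and $(0,v^*)$, where $v^*$ solves \eqref{semi_v} with $K_2=K$.

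To decide which of the two attracts, I would appeal to Lemma~\ref{lem2}, which asserts that $(cK,0)$ is unstable for every $c\in(0,1)$, in particular for $c\in(0,c_2)$. An unstable equilibrium cannot be the globally asymptotically stable one, so the global attractor must be $(0,v^*)$. Combining these observations yields that for all $c\in(0,c_2)$ every solution of \eqref{eq:modified} converges to $(0,v^*)$, which is exactly the claim; the value $c_2$ is simply inherited from Lemma~\ref{lem5}.

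I do not expect a genuine obstacle here, as the proposition is essentially a bookkeeping assembly of the preceding lemmas. The only points requiring care are checking that \eqref{eq:modified} indeed fits the monotone-dynamical-systems framework so that exactly one of the three alternatives in Lemma~\ref{lemmaHsu} can hold, and confirming that ``$(cK,0)$ unstable'' together with ``no coexistence equilibrium'' logically forces ``$(0,v^*)$ globally attracting'' rather than merely ``$(0,v^*)$ stable'' — a deduction provided verbatim by the Remark after Lemma~\ref{lemmaHsu}.
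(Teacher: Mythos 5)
Your proof is correct and follows exactly the paper's route: the paper likewise obtains Proposition~\ref{theorem2} by combining Lemma~\ref{lem5} (no coexistence equilibrium for $c\in(0,c_2)$) with Lemma~\ref{lem2} (instability of $(cK,0)$), Lemma~\ref{lem3} (the trivial equilibrium is a repeller), and the trichotomy of Lemma~\ref{lemmaHsu} and its accompanying Remark. The only difference is that the paper states this assembly in a single sentence, whereas you spell out the bookkeeping explicitly.
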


Now, we can proceed to the proof of the main result.


\begin{proof} ({\bf the proof of Theorem~\ref{theorem_harv1}})
Let $\beta \in [0,1)$. Then \eqref{eq:main_problem} becomes \eqref{eq:modified} with
\begin{equation}
\label{transition}
K \to (1-\beta)K, ~~ c=\frac{1-\alpha}{1-\beta}, ~~r_1= 1-\alpha,~~r_2=1-\beta.
\end{equation}

1) If $\alpha \leq \beta$, we immediately get $c \geq 1$. By Lemma~\ref{lem1},
the semi-trivial equilibrium $(u^{\ast},0)=((1-\alpha)K,0)$ is globally asymptotically stable.

2) Fix $\beta \in [0,1)$. 
{\bf By Proposition~\ref{theorem1},
there is $\displaystyle c_1=\frac{1-\alpha_1}{1-\beta}\in (0,1)$ such that for any $c \in (c_1,1)$, all solutions 
of \eqref{eq:modified} strongly persist},
where $c_1 \leq c^{\ast}$, $c^{\ast}$ is defined in 
\eqref{eq_c_ast}. Since $\alpha_1=1-c_1(1-\beta)$, taking into account that $K$ is substituted by $(1-\beta)K$ and $v^{\ast}=v_{\beta}^{\ast}$
is also $\beta$-dependent, the bound $\alpha^{\ast}=1-c^{\ast}(1-\beta)$ has form \eqref{alpha_star}. For $\alpha \in (\beta, \alpha_1)$,
which corresponds to $c \in (c_1,1)$, by Proposition~\ref{theorem1}, {\bf all solutions strongly persist.}

3) Again, for a fixed $\beta \in [0,1)$, system \eqref{eq:main_problem} becomes \eqref{eq:modified} with 
\eqref{transition}. By Proposition~\ref{theorem2}, there exists a $\displaystyle c_2=\frac{1-\alpha_2}{1-\beta}\in (0,1)$
such that for any $c \in (0,c_2)$, all solutions
of \eqref{eq:modified} and thus all solutions of \eqref{eq:main_problem} converge to $(0,v_{\beta}^{\ast})$. Choosing 
$\displaystyle \alpha_2=1-c_2(1-\beta) \in (0,1)$, we get that for $\alpha \in (\alpha_2,1)$, all solutions of 
\eqref{eq:main_problem} converge to $(0,v_{\beta}^{\ast})$, which concludes the proof.
\end{proof}

\begin{Rem}
If we consider $\beta \geq 1$ then, from the second equation in \eqref{eq:main_problem}, 
$\displaystyle \lim_{t \to \infty} v(t)=0$, independently of  $u(t) \geq 0$. The first equation implies,
for any $\alpha \in [0,1)$, $\displaystyle \lim_{t \to \infty} u(t)=(1-\alpha)K$, thus the semi-trivial 
equilibrium $((1-\alpha)K,0)$  is globally asymptotically stable.
\end{Rem}


Next, we proceed to the case of the {\em ideal free pair} when \eqref{no_K} is satisfied and
for some $\gamma>0$ and $\delta>0$, identity \eqref{hull} holds.

Let us present auxiliary statements used in the proof of Theorem~\ref{theorem_harv2}.
We further use the following lemma obtained in \cite{Lou} and recently discussed in \cite{Ang2016}.

\begin{Lemma} \cite{Lou}
\label{lem6}
If $u^*/K_1 \not\equiv 1$ then the solution $u^*$ of \eqref{semi_u} satisfies 
\begin{equation}
\label{Lou_u}  
\int_{\Omega} r P \left( \frac{u^*}{K} -1 \right) >0.
\end{equation}
Similarly, if $v^*/K_2 \not\equiv 1$ then for the solution $v^*$ of \eqref{semi_v} the following inequality holds 
\begin{align*}
\int_{\Omega} r Q \left( \frac{v^*}{K} -1 \right) >0.
\end{align*}
\end{Lemma}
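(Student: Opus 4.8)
The plan is to use the classical device of dividing the stationary equation by its positive solution and integrating by parts, which turns the reaction balance into a manifestly nonnegative Dirichlet-type energy. First I would record that $u^*$ is strictly positive on $\overline{\Omega}$: this is the counterpart for \eqref{semi_u} of the positivity of $v^*$ asserted in Lemma~\ref{lem0}, and it follows from the maximum-principle results cited in \cite{CC,Korobenko2013}. Positivity guarantees that $w:=u^*/P>0$, so that division by $w$ is legitimate, and $w$ inherits the no-flux condition $\partial_n w=\partial_n(u^*/P)=0$ on $\partial\Omega$, which makes all boundary terms disappear.

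Next I would divide \eqref{semi_u} (taking the carrying capacity there to be $K$, as in the conclusion) by $w=u^*/P$ and integrate over $\Omega$. Since $u^*/w=P$, the reaction term contributes $\int_\Omega r_1\,r\,P\,(1-u^*/K)\,dx$. For the diffusion term I would integrate by parts; the Neumann condition kills the boundary contribution, and using $\nabla(1/w)=-\nabla w/w^2$ gives
\[
\int_\Omega \frac{1}{w}\,\nabla\cdot\!\left(a\nabla w\right)dx
=-\int_\Omega a\,\nabla w\cdot\nabla\frac{1}{w}\,dx
=\int_\Omega \frac{a\,|\nabla w|^2}{w^2}\,dx\ \ge\ 0 .
\]
Adding the two pieces and dividing by $r_1>0$ yields the identity
\[
\int_\Omega r P\left(\frac{u^*}{K}-1\right)dx
=\frac{1}{r_1}\int_\Omega \frac{a\,\bigl|\nabla (u^*/P)\bigr|^2}{(u^*/P)^2}\,dx\ \ge\ 0 ,
\]
which immediately delivers the nonstrict inequality since $a>0$.

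The step I expect to be the main obstacle is upgrading ``$\ge 0$'' to ``$>0$'' from the hypothesis $u^*/K\not\equiv 1$. Equality would force $\nabla w\equiv 0$, i.e.\ $u^*=cP$ for some constant $c$; substituting this back makes the diffusion term in \eqref{semi_u} vanish and forces $r\,u^*(1-u^*/K)\equiv 0$, so that $u^*\equiv K$ wherever $r>0$, i.e.\ $u^*$ is proportional to $P$ with $cP=K$ on $\{r>0\}$. I would then rule this out by combining uniqueness of the positive solution of \eqref{semi_u} with the non-degeneracy condition \eqref{no_K} (equivalently, with the stated hypothesis $u^*/K\not\equiv 1$, which excludes $u^*\equiv K$). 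The delicate point is precisely that $r$ is allowed to vanish on part of $\Omega$, so the pointwise identity $u^*=K$ only propagates over $\{r>0\}$; this is the one place where the structural assumption must be invoked carefully to conclude that $u^*\propto P$ is incompatible with $u^*/K\not\equiv 1$, forcing $\nabla w\not\equiv 0$ and hence strict inequality.

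Finally, the assertion for $v^*$ is obtained verbatim after the substitutions $a\mapsto b$, $P\mapsto Q$, $K\mapsto K_2$ and $r_1\mapsto r_2$ in \eqref{semi_v}, producing $\int_\Omega rQ(v^*/K-1)\,dx>0$ whenever $v^*/K_2\not\equiv 1$; no new ideas are required, only the symmetric repetition of the argument above.
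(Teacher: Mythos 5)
There is no internal proof to compare against here: the paper imports this lemma from \cite{Lou} without proof (the weighted form used here also appears in \cite{Ang2016,Korobenko2014}). Your skeleton --- set $w=u^*/P$, divide the stationary equation by $w$, integrate by parts using the no-flux condition --- is precisely the classical argument behind the cited result, and it correctly produces the identity
\[
r_1\int_\Omega r P\left(\frac{u^*}{K}-1\right)dx \;=\; \int_\Omega \frac{a\,\bigl|\nabla(u^*/P)\bigr|^2}{(u^*/P)^2}\,dx \;\ge\; 0 ,
\]
so the nonstrict inequality is solid.

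The genuine gap is the strictness step, which you flag but do not close, and which cannot be closed by the devices you propose. Equality forces $u^*=cP$, and the reaction term then gives $cP=K$ only on $\{r>0\}$; under the paper's standing assumptions ($r\ge 0$, positive merely on an open subdomain), this is compatible with $u^*/K\not\equiv 1$ on $\Omega$, so the strict inequality can genuinely fail. Concretely, on $\Omega=(0,1)$ take $a\equiv P\equiv 1$, $r_1=1$, a smooth $r$ with $r>0$ on $(0,1/2)$ and $r\equiv 0$ on $[1/2,1]$, and a smooth positive $K$ with $K\equiv 1$ on $(0,1/2]$ and $K>1$ on $(1/2,1)$. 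Then $u^*\equiv 1$ solves \eqref{semi_u} with $K_1=K$ (the diffusion term vanishes, and the reaction term vanishes because $K=1$ wherever $r>0$ while $r=0$ elsewhere); positive solutions are unique by the background results the paper relies on \cite{CC,Korobenko2014}, so this is the $u^*$ the lemma speaks of, and $u^*/K\not\equiv 1$; yet $\int_\Omega rP(u^*/K-1)\,dx=0$, not $>0$. Neither of your rescue devices helps: uniqueness holds in this example, and \eqref{no_K} holds as well ($K/P$ nonconstant on $\Omega$ does not prevent $K/P$ from being constant on $\{r>0\}$) --- and in any case \eqref{no_K} is not among the lemma's hypotheses, so you are not entitled to use it. The correct repair is a strengthening the paper leaves implicit: either assume $r>0$ on $\overline\Omega$ (the setting of \cite{Lou}; then your equality analysis closes at once, since $u^*=cP$ with $cP=K$ on all of $\Omega$ contradicts $u^*/K\not\equiv 1$), or read the hypothesis as ``$u^*/K\not\equiv 1$ on $\{r>0\}$''. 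The same strengthening is tacitly needed where the paper applies the lemma (Lemmata~\ref{lem7} and \ref{lem8}, whose strict sign claims degenerate in the example above). In short: right method and correct ``$\ge$'', but your passage from ``$\ge$'' to ``$>$'' does not work under the hypotheses as stated.
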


\begin{Rem}
\label{remark1}
If in \eqref{semi_u} we assume constant $a$ and $P$ and $r=K$, we get the stationary solution of the Neumann problem for Fisher's equation
\begin{equation*}
d \Delta u^{\ast} + 
u^* (x) \left( K(x)-u^*(x) \right)=0,\;x\in\Omega,~~~
\frac{\partial u^*}{\partial n}=0,\;x\in\partial\Omega,
\label{semi_u_standard}
\end{equation*}
for which \eqref{Lou_u} leads to an important conclusion
\begin{equation}
\label{higher_average}
\int_{\Omega} u^*(x)~dx > \int_{\Omega} K(x)~dx
\end{equation}
that the average solution levels are higher than that of the carrying capacity, while \eqref{eq_aux_lem1} leads to the opposite result for space-independent $r$.
\end{Rem}

\begin{Lemma} 
\label{lem7}
Let \eqref{no_K} and \eqref{hull} hold for some $\gamma>0$, $\delta>0$, and $c \in (0,1]$. Then the semi-trivial equilibrium $(u^*,0)$ of \eqref{eq:modified} is unstable.
\end{Lemma}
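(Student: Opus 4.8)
The plan is to establish instability of $(u^*,0)$ exactly as in the proof of Lemma~\ref{lem4}, namely by showing that the principal eigenvalue $\sigma_1$ of the linearization of the \emph{second} equation of \eqref{eq:modified} about $(u^*,0)$ is positive. Since the linearized system is block triangular (the $v$-perturbation decouples from the $u$-perturbation), a positive $\sigma_1$ of the $v$-block already forces instability. Freezing $u=u^*$ and keeping the linear part in $v$ gives the eigenvalue problem
\[
\nabla\cdot\left[b\nabla\left(\frac{\phi}{Q}\right)\right]+r_2\,r\,\phi\left(1-\frac{u^*}{K}\right)=\sigma\phi,\quad x\in\Omega,\qquad \frac{\partial(\phi/Q)}{\partial n}=0,\ x\in\partial\Omega,
\]
whose principal eigenvalue admits, following \cite{CC}, the variational representation
\[
\sigma_1=\sup_{\phi\neq 0,\ \phi\in W^{1,2}}\ \frac{-\int_\Omega b\left|\nabla\left(\frac{\phi}{Q}\right)\right|^2dx+\int_\Omega r_2\,r\,\frac{\phi^2}{Q}\left(1-\frac{u^*}{K}\right)dx}{\int_\Omega \frac{\phi^2}{Q}\,dx}.
\]
Testing with $\phi=Q$ annihilates the gradient term (as $Q/Q$ is constant), so it suffices to prove
\[
\int_\Omega r\,Q\left(1-\frac{u^*}{K}\right)dx>0 ,
\]
which yields $\sigma_1>0$.

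To reach this inequality I would set $h:=1-u^*/(cK)$ and collect two facts about $h$. First, by \eqref{no_K} we have $u^*\not\equiv cK$ (otherwise $\nabla\cdot[a\nabla(K/P)]\equiv 0$), so $h\not\equiv 0$; integrating \eqref{semi_u} with $K_1=cK$ over $\Omega$ and using the no-flux boundary condition kills the divergence term and gives $\int_\Omega r\,u^*\,h\,dx=0$, and substituting $u^*=cK(1-h)$ converts this into $\int_\Omega r\,K\,h\,dx=\int_\Omega r\,K\,h^2\,dx$. Second, Lemma~\ref{lem6} applied to $u^*/c$, which solves \eqref{semi_u} with $K_1=K$, gives $\int_\Omega r\,P\,(u^*/(cK)-1)\,dx>0$, i.e. $\int_\Omega r\,P\,h\,dx<0$. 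This strict inequality guarantees $h\not\equiv 0$ on $\{r>0\}$, whence $\int_\Omega r\,K\,h^2\,dx>0$ and therefore $\int_\Omega r\,K\,h\,dx>0$.

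Now the convex-hull identity \eqref{hull} serves as the bridge: since $K=\gamma P+\delta Q$,
\[
\int_\Omega r\,K\,h\,dx=\gamma\int_\Omega r\,P\,h\,dx+\delta\int_\Omega r\,Q\,h\,dx ,
\]
and because the left-hand side is positive while $\gamma\int_\Omega r\,P\,h\,dx<0$, I conclude $\delta\int_\Omega r\,Q\,h\,dx>0$, hence $\int_\Omega r\,Q\,h\,dx>0$. Unwinding $h$ gives $\int_\Omega r\,Q\,u^*/K\,dx<c\int_\Omega r\,Q\,dx\le\int_\Omega r\,Q\,dx$ for $c\in(0,1]$, the leftmost inequality being strict, which is exactly $\int_\Omega r\,Q\,(1-u^*/K)\,dx>0$. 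Thus $\sigma_1>0$ and $(u^*,0)$ is unstable.

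I expect the crux to be the sign flip in the previous paragraph. The $u^*$-equation contains no trace of $Q$, so Lemma~\ref{lem6} can only supply $P$-weighted information, which points in the ``wrong'' direction (it says $u^*$ over-matches $cK$ against the weight $rP$), whereas the invasion eigenvalue demands a $Q$-weighted \emph{under}-matching. The decisive observation is that these opposite-looking statements are reconciled by \eqref{hull} together with the integrated equation $\int_\Omega r\,K\,h\,dx=\int_\Omega r\,K\,h^2\,dx>0$: the positive $K$-weighted mean of $h$ must be split, via $K=\gamma P+\delta Q$, between a negative $P$-part and hence a necessarily positive $Q$-part. The only routine point needing care is the strict positivity of $\int_\Omega r\,K\,h^2\,dx$, which I dispatch by noting that Lemma~\ref{lem6} already forces $h\not\equiv 0$ where $r>0$.
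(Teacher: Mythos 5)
Your proof is correct and follows essentially the same route as the paper: you integrate \eqref{semi_u} to obtain the identity $\int_\Omega rKh\,dx=\int_\Omega rKh^2\,dx$ (the paper writes this as a negative quadratic term plus a linear term), invoke Lemma~\ref{lem6} for the $P$-weighted sign, split the $K$-weighted integral through the hull identity \eqref{hull} to extract $\int_\Omega rQ\,(1-u^*/(cK))\,dx>0$, pass from $cK$ to $K$ using $c\le 1$, and conclude via the variational characterization of the principal eigenvalue with test function $Q$. The only cosmetic differences are your rescaling $u^*/c$ to apply Lemma~\ref{lem6} with $K_1=K$ (the paper applies it directly with $K_1=cK$) and your extra care about strict positivity of $\int_\Omega rKh^2\,dx$, which is in fact not needed since the strict negativity of $\gamma\int_\Omega rPh\,dx$ alone forces $\delta\int_\Omega rQh\,dx>0$.
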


\begin{proof}
Integrating \eqref{semi_u} with $K_1=cK$ and using the boundary conditions for $u^*$, we get
\begin{align*}
0 = & \int_{\Omega} r_1 r u^* \left( 1 - \frac{u^*}{cK} \right)~dx 
\\ = & ~r_1  \int_{\Omega} \frac{r}{cK} (u^* - cK) \left( cK - u^* \right)~dx + c r_1 \int_{\Omega} r K \left( 1 - \frac{u^*}{cK} \right)~dx 
\\ = & - r_1  \int_{\Omega} \frac{r}{cK} (u^* - cK)^2 ~dx +c \gamma r_1 \int_{\Omega} r P \left( 1 - \frac{u^*}{cK} \right)~dx 
+ c \delta r_1 \int_{\Omega} r Q \left( 1 - \frac{u^*}{cK} \right)~dx \\
 =  & - r_1  \int_{\Omega} \frac{r}{cK} (u^* - cK)^2 ~dx  - c \gamma r_1 \int_{\Omega} r P \left( \frac{u^*}{cK} - 1 \right)~dx
+ c \delta r_1 \int_{\Omega} r Q \left( 1 - \frac{u^*}{cK} \right)~dx.
\end{align*}
Here a substitution of $u^*$ in \eqref{semi_u} with $K_1=cK$, where \eqref{no_K} holds, leads to the conclusion that $u^*-cK 
\not\equiv 0$. Thus the first term in the right-hand side above is negative. By \eqref{Lou_u} in
Lemma~\ref{lem6}, the second term is also negative. Since the sum equals to zero,
$$
0 < \int_{\Omega} r Q \left( 1 - \frac{u^*}{cK} \right)~dx.
$$
Moreover, as  $c \in (0,1]$ and
$$
\int_{\Omega} r Q \left( 1 - \frac{u^*}{cK} \right)~dx = \int_{\Omega} r Q \left( 1 - \frac{u^*}{K} \right)~dx - \frac{1-c}{c} 
\int_{\Omega} r Q \frac{u^*}{K}~dx >0,  
$$
with $(1-c)/c \geq 0$, the first term in the right-hand side of the equality above is positive
\begin{equation}
\label{22}
\int_{\Omega} r Q \left( 1 - \frac{u^*}{K} \right)~dx >0.
\end{equation}

Next, consider the  eigenvalue problem for the linearization of the second equation 
in \eqref{eq:modified} around the equilibrium $(u^*,0)$
\begin{equation}\label{eig_p2}
\nabla \cdot \left[ b \nabla \left(\frac{\displaystyle \psi }{\displaystyle Q}\right) \right]
+r_2 r \psi  \left(1-\frac{\displaystyle u^{\ast}}{\displaystyle K(x)}\right)=\sigma \psi,\; x\in \Omega,\;
 \frac{\displaystyle \partial (\psi/Q)}{\displaystyle \partial n}=0,\; x\in\partial\Omega.
\end{equation}
According to \cite{CC}, the principal eigenvalue $\sigma_1$ of (\ref{eig_p2}) can be computed as 
\begin{align*}
\sigma_1 =
\sup_{\psi \neq 0, \psi\in W^{1,2}} \left. \left[
-\int \limits_\Omega b |\nabla (\psi/Q)|^2\,dx
+\int \limits_\Omega r_2\,r \frac{\psi^2}{Q} \left(1-\frac{\displaystyle u^{\ast}}{\displaystyle K}\right)\,dx\right]
\right/
\int \limits_\Omega \frac{\psi^2}{Q}\,dx.
\end{align*}
It is  not less than the value computed for $\psi=Q$ which is
$$
\sigma_1 \geq \left. r_2 \int\limits_{\Omega} r Q \left( 1 - \frac{u^*}{K} \right)~dx \right/ \int\limits_\Omega  Q\,dx >0
$$ 
by \eqref{22}. Since the principal eigenvalue is positive, the equilibrium $(u^*,0)$ is unstable.
\end{proof}

\begin{Lemma} 
\label{lem8}
Let \eqref{no_K} and \eqref{hull} hold for some $\gamma>0$, $\delta>0$. Then there exists  $c_1 \in (0,1)$
such that for $c \in (c_1,1)$, the semi-trivial equilibrium $(0,v^*)$ of \eqref{eq:modified} is unstable.
\end{Lemma}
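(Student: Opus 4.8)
The plan is to show that the semi-trivial equilibrium $(0,v^*)$ is unstable by proving that the principal eigenvalue of the linearization of the first equation in \eqref{eq:modified} about $(0,v^*)$ is positive for $c$ in a left neighbourhood of $1$. This is structurally parallel to Lemma~\ref{lem4}, but with one crucial change: here $P$ is \emph{not} proportional to $K$, so the test function that kills the diffusion term and the supporting integral identity must both be adapted to the convex-hull condition \eqref{hull}.

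First I would linearize the $u$-equation at $(0,v^*)$ and pass to the associated eigenvalue problem
\begin{equation*}
\nabla \cdot \left[ a \nabla \left( \frac{\psi}{P} \right) \right] + r_1 r \psi \left( 1 - \frac{v^*}{cK} \right) = \sigma \psi, \quad x \in \Omega, \qquad \frac{\partial(\psi/P)}{\partial n} = 0, \quad x \in \partial\Omega,
\end{equation*}
and invoke the variational characterization of the principal eigenvalue $\sigma_1$ from \cite{CC} exactly as in \eqref{eig_p1a}. The key choice is the test function $\psi = P$: since $\psi/P \equiv 1$, the gradient term $\int_\Omega a|\nabla(\psi/P)|^2\,dx$ vanishes identically, and I obtain the lower bound
\begin{equation*}
\sigma_1 \geq \frac{r_1 \int_\Omega r P\left( 1 - \frac{v^*}{cK} \right)\,dx}{\int_\Omega P\,dx}.
\end{equation*}
It therefore suffices to show that $\int_\Omega rP\left(1 - v^*/(cK)\right)\,dx>0$ for $c$ sufficiently close to $1$, noting that $v^*$ solves \eqref{semi_v} with $K_2=K$ and hence is independent of $c$.

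The heart of the proof is the inequality $\int_\Omega rP\left(1 - v^*/K\right)\,dx > 0$ at $c=1$, and here I would follow the algebraic manipulation of Lemma~\ref{lem7}. Integrating the identity $\int_\Omega r_2 r v^*(1 - v^*/K)\,dx = 0$ against the boundary conditions, writing $v^* = (v^*-K)+K$, and substituting $K = \gamma P + \delta Q$ from \eqref{hull}, I expect to arrive at
\begin{equation*}
0 = -r_2 \int_\Omega \frac{r}{K}(v^* - K)^2\,dx + \gamma r_2 \int_\Omega rP\left(1 - \frac{v^*}{K}\right)\,dx - \delta r_2 \int_\Omega rQ\left( \frac{v^*}{K} - 1\right)\,dx.
\end{equation*}
Condition \eqref{no_K} forces $v^* \not\equiv K$ (otherwise $\nabla\cdot[b\nabla(K/Q)]\equiv 0$), so the squared term is strictly negative; the second part of Lou's Lemma~\ref{lem6}, applied with $K_2=K$, makes the final term strictly negative as well. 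As the three terms sum to zero, the remaining term $\gamma r_2\int_\Omega rP(1 - v^*/K)\,dx$ must be strictly positive, which is precisely the desired inequality.

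Finally, since $v^*$ does not depend on $c$, I would make the threshold explicit: $\int_\Omega rP\left(1 - v^*/(cK)\right)\,dx = \int_\Omega rP\,dx - c^{-1}\int_\Omega rP\,v^*/K\,dx$ is positive exactly for $c > c_1 := \left(\int_\Omega rP\,dx\right)^{-1}\int_\Omega rP\,v^*/K\,dx$, and the inequality just established shows $c_1 < 1$, while positivity of all integrands gives $c_1 > 0$. Hence $\sigma_1 > 0$ for every $c \in (c_1,1)$, and $(0,v^*)$ is unstable. I expect the main obstacle to lie in the sign bookkeeping of the central identity --- correctly invoking Lou's lemma for the $Q$-component with the right orientation and confirming that \eqref{no_K} rules out $v^* \equiv K$ --- rather than in the eigenvalue estimate, which becomes routine once $\psi = P$ is chosen.
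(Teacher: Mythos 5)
Your proposal is correct and follows essentially the same route as the paper: the same eigenvalue characterization with test function $\psi=P$, the same integral identity obtained by decomposing $K=\gamma P+\delta Q$ via \eqref{hull}, the same appeal to Lou's inequality (Lemma~\ref{lem6}) for the $Q$-term, and the same explicit threshold $c_1=\int_\Omega rP\,v^*/K\,dx\big/\int_\Omega rP\,dx$, which coincides with the paper's $c^*$ in \eqref{c_ast}. If anything, your explicit observation that \eqref{no_K} rules out $v^*\equiv K$ (so the squared term is strictly negative) makes a point the paper leaves implicit.
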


\begin{proof}
Integrating \eqref{semi_v} over $\Omega$, taking into account the boundary conditions, gives
\begin{align*}
0  = & \int_{\Omega} r_2 r v^* \left( 1 - \frac{v^*}{K} \right)~dx 
\\  = & ~ r_2  \int_{\Omega} \frac{r}{K} (v^* - K) \left( K - v^* \right)~dx 
+  r_2 \int_{\Omega} r K \left( 1 - \frac{v^*}{K} \right)~dx 
\\  = & - r_2  \int_{\Omega} \frac{r}{K} (v^* - K)^2 ~dx + \gamma r_2 \int_{\Omega} r P \left( 1 - \frac{v^*}{K} \right)~dx 
+ \delta r_2 \int_{\Omega} r Q \left( 1 - \frac{v^*}{K} \right)~dx \\
 = & - r_2  \int_{\Omega} \frac{r}{K} (v^* - K)^2 ~dx  +  \gamma r_2 \int_{\Omega} r P \left( 1- \frac{v^*}{K}  \right)~dx
- \delta r_2 \int_{\Omega} r Q \left( \frac{v^*}{K} -1 \right)~dx, 
\end{align*}
where the first and the third terms are negative. 

Thus the second term is positive, and
$\displaystyle
\int_{\Omega} r P \left( 1 - \frac{v^*}{K} \right)~dx >0.
$
Moreover, denoting 
\begin{equation}
\label{c_ast}
c^* = \frac{ \int_{\Omega} P r v^*/K~dx}{\int_{\Omega}  r P~dx} <1,
\end{equation}
we obtain that
\begin{equation}
\label{22a}
\int_{\Omega} r P \left( 1 - \frac{v^*}{cK} \right)~dx >0, \quad c \in (c^*,1].
\end{equation}

Consider eigenvalue problem \eqref{eig_p1} for the linearization of the first equation in \eqref{eq:modified}.
Its principal eigenvalue $\sigma_1$ is given by \eqref{eig_p1a}, and it is not less than the value computed for $\psi=P$, i.e.
$$
\sigma_1 \geq \left. r_1 \int\limits_{\Omega} r P \left( 1 - \frac{v^*}{cK} \right)~dx \right/ \int\limits_\Omega  P\,dx >0
$$ 
for $c \in (c_1,1]$, and $c_1 \leq c^*$,
by \eqref{22a}, where $c^*$ is defined in \eqref{c_ast}. 
Since the principal eigenvalue is positive, the equilibrium $(0,v^*)$ for $c\in (c^*,1)$ is unstable.
\end{proof}

\begin{Lemma} 
\label{lem9}
Let \eqref{no_K} and \eqref{hull} hold for some $\gamma>0$, $\delta>0$. 
There exists $c_2 \in (0,1)$ such that whenever $c \in(0, c_2)$, system \eqref{eq:modified}
has no coexistence equilibrium.
\end{Lemma}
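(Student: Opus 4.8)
The plan is to mirror the structure of the proof of Lemma~\ref{lem5}: I would show that for $c$ small, a hypothetical coexistence equilibrium $(u_s,v_s)$ of \eqref{eq:modified} must simultaneously have $u_s$ uniformly small and $v_s$ bounded below by a fixed positive function, which forces the integrated first equation to be strictly negative — a contradiction. The only genuinely new ingredient is the upper solution for $u$: since here $P$ need not be proportional to $K$, the choice $\overline u\propto K$ used in Lemma~\ref{lem5} would no longer annihilate the dispersal term, so instead I would take $\overline u$ proportional to $P$.

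First I would record the uniform positive bounds \eqref{eq:K_bounds} on $K$ and \eqref{eq:v_star_bounds} on the solution $v^*$ of \eqref{semi_v} with $K_2=K$ (guaranteed, as in Lemma~\ref{lem5}, by \cite[Appendix Proposition 4]{Korobenko2013} and \cite[Propositions 3.2 and 3.3]{CC}), and set $P_{\max}:=\max_{\overline\Omega}P$ and $\Lambda:=\max_{\overline\Omega}(K/P)$, both finite and positive by the continuity and positivity hypotheses. Taking $\overline u:=c\Lambda P$, the ratio $\overline u/P=c\Lambda$ is constant, so the dispersal term $\nabla\cdot[a\nabla(\overline u/P)]$ vanishes; since $\Lambda P\ge K$ pointwise we obtain $1-\overline u/(cK)=1-\Lambda P/K\le 0$, and using $v_s\ge 0$ this shows $\overline u$ is an upper solution of the first equation of \eqref{eq:modified} in the sense of \cite[Chapter 8, Definition 1.2]{Pao}, with $0$ as a lower solution. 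By the sub/supersolution bracketing for the scalar logistic problem, exactly as in Lemma~\ref{lem5}, this yields $0\le u_s\le c\Lambda P\le c\Lambda P_{\max}$, hence $u_s\le v^*/2$ as soon as $c\le b/(2\Lambda P_{\max})$.

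With this bound I would verify, just as in Lemma~\ref{lem5}, that $\underline v:=v^*/2$ is a lower solution of the second equation: $u_s+\underline v\le v^*$ gives $1-(u_s+\underline v)/K\ge 1-v^*/K$, and since $v^*$ solves \eqref{semi_v} with $K_2=K$ the two halves combine to $0$, so $v_s\ge v^*/2\ge b/2$. Finally I would substitute $u_s\ge 0$ and $v_s\ge b/2$ into the stationary first equation, integrate over $\Omega$, and use the no-flux boundary condition to annihilate the diffusion integral; the remaining integrand is bounded above by $r_1 r u_s(1-b/(2cM))$, which is strictly negative on the set where $ru_s>0$ whenever $c<b/(2M)$. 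Because a coexistence $u_s$ is positive throughout $\Omega$ by the strong maximum principle and $r>0$ on a nonempty open set, the integral is strictly negative, contradicting its vanishing. Setting $c_2:=\min\{\,b/(2\Lambda P_{\max}),\,b/(2M),\,1\,\}$ then gives the claim.

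The main obstacle I anticipate is precisely the upper-solution step, where the ideal free case departs from Lemma~\ref{lem5}: the test function $\overline u$ must at once suppress the $P$-weighted dispersion (forcing $\overline u\propto P$) and dominate $cK$ everywhere, which is exactly what the bounded ratio $\Lambda=\max(K/P)$ secures. I note that the geometry \eqref{hull}, \eqref{no_K} plays no essential role in this particular argument beyond fixing the standing positivity setup — the mechanism is simply that the modified carrying capacity $cK$ of the first species is small. The remaining care points, namely strict positivity of $u_s$ for the strict inequality and the validity of the sub/supersolution squeezing for the scalar logistic problems, are standard via the maximum principle and \cite[Chapter 8]{Pao}, exactly as in the proof of Lemma~\ref{lem5}.
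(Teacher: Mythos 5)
Your proof is correct and follows essentially the same route as the paper's: an upper solution proportional to $P$ (the paper takes $\overline{u}=\tfrac{bg}{2M}P$ with $g=\min_{\overline\Omega}K/P$, you take $\overline{u}=c\Lambda P$ with $\Lambda=\max_{\overline\Omega}K/P$) annihilates the dispersal term and forces $u_s\le b/2\le v^*/2$ for small $c$, after which $\underline{v}=v^*/2$ serves as a lower solution of the second equation and integration of the first stationary equation over $\Omega$ gives the contradiction. The only deviation --- your supersolution scales with $c$ while the paper's is $c$-independent --- is immaterial, and your remark that \eqref{no_K} and \eqref{hull} play no real role here is consistent with the paper's own argument, which likewise never invokes them.
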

	
\begin{proof}
As we assumed $K,P \in C^{1+p}(\overline \Omega)$, $K(x)>0$, $P(x)>0$ for all $x\in \overline \Omega$, there is an upper bound for $K$ in  \eqref{eq:K_bounds},  $M:=\max_{x\in\overline \Omega }K(x)$.
Similar smoothness estimates are valid for $P(x)$ and the ratio $K/P$, and the bounds are positive
$$  
0<g :=\min_{x\in\overline \Omega } \frac{K(x)}{P(x)},\, G:= \max_{x\in\overline \Omega } \frac{K(x)}{P(x)}.
$$
Also, as mentioned in the proof of Lemma~\ref{lem5}, the lower bound for a positive stationary solution $v^*(x)$ of \eqref{semi_v} with $K_2=K$
is positive $0<b:=\min_{x\in\overline \Omega }v^*(x)$, see \eqref{eq:v_star_bounds}. Again, $b \leq M$, $b/(2M)<1$ and, by the definition of $g$ and $G$,
$\displaystyle g \leq \frac{K(x)}{P(x)} \leq G$.
We choose
$$c^{**} := \frac{bg}{2GM} \leq \frac{1}{2},\quad c \in (0,c^{**}).
$$
Then $\displaystyle c<\frac{b}{2M}$.
Let $\displaystyle \overline{u}(x)  = \frac{b g P(x)}{2M}$.
Substituting $\overline{u}$ into the first equation in \eqref{eq:modified}, assuming $v_s\geq 0$ and noticing that the diffusion term vanishes
as $\overline{u}/P$  is constant, we obtain
$$
r \overline{u} \left(1- \frac{\overline{u}+v_s}{cK} \right)\leq  r\overline{u} \left(1- \frac{\overline{u}}{cK} \right) 
=  r\overline{u} \left( 1- \frac{b g P(x)}{2cKM}\right)
\leq r\overline{u} \left(  1 - \frac{bg}{2cGM} \right) <0
$$
for any $x \in \Omega$.  
Thus $\overline{u} = \frac{bgP(x)}{2M}$ is an upper solution, choosing zero as a lower solution, we get
$$
0\leq u_s(x)\leq\frac{bgP(x)}{2M} \leq \frac{b}{2} \leq \frac{v^*(x)}{2},
$$
as in \eqref{eq:u_s_bounds}.
Choosing $\underline{v} = \frac{v^*}{2}$ in the second equation of \eqref{eq:modified}, we can see that it is a lower solution, see \cite[Chapter 8, Definition 1.2]{Pao}, since $v^*$ is a solution of \eqref{semi_v} with $K_2 \equiv K$:
\begin{align*}
& \nabla \cdot \left[ b(x) \nabla \left( \frac{\underline{v}}{Q} \right) \right] + 	
r_2 r(x)\underline{v}  \left(1-\frac{u_s+\underline{v} }{K} \right) \\ \geq & 
\frac{1}{2} \nabla \cdot \left[ b(x) \nabla \left( \frac{v^*}{Q} \right) \right]
+ r_2 r(x)\underline{v} \left(1-\frac{0.5 v^*+ 0.5v^*}{K} \right) = 0.
\end{align*}
Thus, $v_s\geq \frac{v^*}{2}$. Finally, using this inequality together with the fact that $u_s\geq 0$ in the first equation of \eqref{eq:modified}, we get
		\begin{align*}
0 = & \nabla \cdot \left[ a(x) \nabla \left( \frac{u_s(x)}{P(x)} \right) \right] +  r_1 r(x)u_s(x) 
\left(1-\frac{u_s(x) + v_s(x)}{cK(x)} \right)
\\  \leq & \nabla \cdot \left[ a(x) \nabla \left( \frac{u_s(x)}{P(x)} \right) \right] + r_1 r(x)u_s(x)  
\left(1-\frac{v_s(x)}{cK(x)} \right) 
\\ \leq  & \nabla \cdot \left[ a(x) \nabla \left( \frac{u_s(x)}{P(x)} \right) \right] + 
r_1 r(x)u_s(x) \left(1 -\frac{v^*(x)}{2c K(x)} \right) \\ \leq &
 \nabla \cdot \left[ a(x) \nabla \left( \frac{u_s(x)}{P(x)} \right) \right]  + r_1 r(x)u_s(x) \left( 1 -\frac{b}{2cM} \right)\\
 \leq & \nabla \cdot \left[ a(x) \nabla \left( \frac{u_s(x)}{P(x)} \right) \right],
		\end{align*}
and the strict inequality is valid unless $u_s \equiv 0$. Integrating both sides of the inequality 
above using boundary conditions and assuming $u_s \not\equiv 0$ leads to
$$
0 =  \int_{\Omega}  \nabla \cdot \left[ a \nabla \left( \frac{u_s}{P} \right) \right] \, dx + \int_{\Omega}r(x)u_s 
\left(1-\frac{u_s+v_s}{cK} \right)dx <  \int_{\Omega}  \nabla \cdot \left[ a\nabla \left( \frac{u_s}{P} \right) \right] \, dx =0.
$$
The contradiction proves that  there is no coexistence equilibrium.
\end{proof}

Similarly to Lemma~\ref{lem8}, we  establish stability for $c \in [1,\infty)$.

\begin{Lemma} 
\label{lem8a}
Let \eqref{no_K} and \eqref{hull} hold for some $\gamma>0$, $\delta>0$. Then there exists  $c_1 \in (1,\infty)$
such that for $c \in [1,c_1)$, the semi-trivial equilibrium $(u^*,0)$ of \eqref{eq:modified} is unstable.
\end{Lemma}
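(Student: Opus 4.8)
The plan is to mirror the proof of Lemma~\ref{lem7}: establish instability of $(u^*,0)$ by exhibiting a positive principal eigenvalue for the linearization of the second equation of \eqref{eq:modified} about $(u^*,0)$, i.e. for problem \eqref{eig_p2}. Testing the variational characterization of its principal eigenvalue with $\psi=Q$ (so that $\psi/Q\equiv 1$ and the diffusion contribution vanishes) gives the lower bound
\[
\sigma_1 \geq \left. r_2 \int_\Omega r Q\left(1-\frac{u^*}{K}\right)dx \right/ \int_\Omega Q\,dx ,
\]
so the whole statement reduces to verifying the integral inequality $\int_\Omega r Q\,(1-u^*/K)\,dx>0$ for all $c$ in a right neighbourhood of $1$, where now $u^*=u_c^*$ solves \eqref{semi_u} with $K_1=cK$.

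First I would record what happens at $c=1$. Integrating \eqref{semi_u} with $K_1=K$ over $\Omega$, using the boundary conditions and the hull decomposition \eqref{hull}, $K=\gamma P+\delta Q$, exactly as in Lemma~\ref{lem7}, yields
\[
0=-r_1\int_\Omega \frac{r}{K}(u_1^*-K)^2\,dx+\gamma r_1\int_\Omega rP\left(1-\frac{u_1^*}{K}\right)dx+\delta r_1\int_\Omega rQ\left(1-\frac{u_1^*}{K}\right)dx .
\]
The first term is strictly negative, since \eqref{no_K} forces $u_1^*\not\equiv K$, and the second term is strictly negative by Lou's inequality \eqref{Lou_u} of Lemma~\ref{lem6}; hence the third term is strictly positive, i.e. $\int_\Omega rQ(1-u_1^*/K)\,dx>0$. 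This is precisely the $c=1$ instance of the desired inequality, and it holds \emph{strictly}.

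The difficulty in passing to $c>1$ is that the algebraic manipulation of Lemma~\ref{lem7} no longer helps: writing $1-u^*/K=(1-u^*/(cK))+\tfrac{1-c}{c}\,u^*/K$, the correction $\tfrac{1-c}{c}\int_\Omega rQ\,u^*/K\,dx$ is \emph{negative} when $c>1$, so the positivity of $\int_\Omega rQ(1-u_c^*/(cK))\,dx$ (which does follow for every $c>0$ from the integrated identity above, now with $cK$ in place of $K$ and Lou's inequality applied with $K_1=cK$) is not sufficient. I would therefore argue by continuity in $c$. The positive steady state $u_c^*$ of the single-species logistic problem \eqref{semi_u} with carrying capacity $cK>0$ is unique and depends continuously on the parameter $c$ by standard elliptic theory, as in \cite{CC,Korobenko2014}; consequently $\Phi(c):=\int_\Omega rQ\,(1-u_c^*/K)\,dx$ is continuous near $c=1$. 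Since $\Phi(1)>0$ by the previous step, there is $c_1>1$ with $\Phi(c)>0$ for all $c\in[1,c_1)$, and the eigenvalue lower bound then gives $\sigma_1>0$, hence instability of $(u^*,0)$ on $[1,c_1)$.

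The main obstacle is the continuous dependence of $u_c^*$ on $c$: it rests on the uniqueness of the positive equilibrium of \eqref{semi_u} together with a priori $C^{2+p}$ bounds and a compactness argument, which I would invoke from the single-species theory rather than reprove. A secondary, purely cosmetic point is that the conclusion is non-constructive, locating $c_1$ only qualitatively, whereas the companion bound \eqref{c_ast_est} for the $(0,v^*)$ case is explicit; obtaining an explicit $c_1$ here would require a uniform lower bound on $\Phi(c)$ as $c\to 1^+$, which the continuity argument does not supply.
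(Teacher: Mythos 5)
Your proposal is correct in substance and follows the paper's core reduction exactly (linearize the second equation about $(u^*,0)$, test the variational formula for the principal eigenvalue of \eqref{eig_p2} with $\psi=Q$, and use the hull decomposition \eqref{hull} plus Lemma~\ref{lem6} to get the $c=1$ inequality \eqref{22}); where you diverge is the passage to $c>1$, and the obstacle you flag there is avoidable. The paper's proof is declared to be ``similar to Lemma~\ref{lem8}'', and what makes Lemma~\ref{lem8} work is that the resident profile there ($v^*$, solving \eqref{semi_v} with $K_2=K$) does not move as $c$ varies, so the $c$-dependence sits entirely in an explicit factor and an explicit threshold drops out. You can restore exactly that structure here with a scaling identity rather than a compactness argument: if $u_1^*$ is the unique positive solution of \eqref{semi_u} with $K_1=K$, then substituting $u^*=cw$ into \eqref{semi_u} with $K_1=cK$ pulls the factor $c$ out of both the diffusion and reaction terms, so by uniqueness $u_c^*=c\,u_1^*$. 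Your functional is then affine in $c$,
\[
\Phi(c)=\int_\Omega rQ\left(1-\frac{c\,u_1^*}{K}\right)dx
=\int_\Omega rQ\,dx-c\int_\Omega rQ\,\frac{u_1^*}{K}\,dx,
\]
and \eqref{22} at $c=1$ (your first step) says precisely $\Phi(1)>0$, so
\[
c_1:=\left.\int_\Omega rQ\,dx\right/\int_\Omega rQ\,\frac{u_1^*}{K}\,dx>1,
\]
with $\Phi(c)>0$ for all $c\in[1,c_1)$; testing \eqref{eig_p2} with $\psi=Q$ then gives $\sigma_1>0$ and instability, as in your write-up. This disposes of both difficulties you name at once: continuity of $c\mapsto u_c^*$ becomes trivial (it is linear in $c$), and the threshold is explicit, matching the explicitness of $c^*$ in Lemma~\ref{lem8} and of \eqref{c_ast_est}. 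Your continuity route is still a legitimate proof — uniqueness plus a priori elliptic estimates do yield continuous dependence, though the real content of that invoked fact is ruling out the degenerate limit $u_{c_n}^*\to 0$, which deserves at least a sentence — and it has the merit of not depending on the special homogeneity of the logistic nonlinearity; but it is qualitative where the paper's intended argument is quantitative.
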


By Lemma~\ref{lem8}, for certain $c$, 
$(0,v^*)$ is unstable, while by Lemma~\ref{lem8a}, for another set of $c$, $(u^*,0)$ is unstable.
Whenever $c$ satisfies both conditions, Lemma~\ref{lemmaHsu} implies coexistence conditions. 

\begin{Prop}
\label{theorem1a} 
Let \eqref{no_K} and \eqref{hull} hold for some $\gamma>0$, $\delta>0$. 
{\bf 
There exist $c_1 \in (0,1)$ and $c_2 \in (1,\infty)$ such that whenever $c \in(c_1,c_2)$, 
all solutions of \eqref{eq:modified} strongly persist.
}
\end{Prop}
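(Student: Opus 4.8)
The plan is to exhibit an interval of values of $c$ straddling $c=1$ on which both semi-trivial equilibria of \eqref{eq:modified} are simultaneously unstable while the origin is a repeller, and then to invoke the monotone dynamical systems dichotomy. First I would note that \eqref{eq:modified} is the special case of \eqref{eq:main_modified} with $K_1=cK$ and $K_2=K$, both positive on $\overline{\Omega}$, so that Lemma~\ref{lem3} guarantees that $(0,0)$ is a repeller for every $c>0$.

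Next I would pin down the instability of $(u^*,0)$. Lemma~\ref{lem7} gives instability for every $c\in(0,1]$, while Lemma~\ref{lem8a} supplies some $c_2\in(1,\infty)$ with $(u^*,0)$ unstable for $c\in[1,c_2)$; concatenating the two ranges shows that $(u^*,0)$ is unstable for all $c\in(0,c_2)$.

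The one genuinely delicate point is the instability of $(0,v^*)$ beyond $c=1$, since Lemma~\ref{lem8} asserts it only on $(c_1,1)$. Here I would re-examine the proof of Lemma~\ref{lem8}: the equilibrium is shown unstable by testing the Rayleigh quotient \eqref{eig_p1a} for the linearization \eqref{eig_p1} against $\psi=P$, which reduces positivity of the principal eigenvalue to the sign of $\int_{\Omega} rP\bigl(1-v^*/(cK)\bigr)\,dx$. Because $v^*$ solves \eqref{semi_v} with $K_2=K$ and is therefore independent of $c$, this quantity equals $\int_{\Omega} rP\,dx-\frac{1}{c}\int_{\Omega} rP\,v^*/K\,dx$, a strictly increasing function of $c$ that vanishes exactly at the value $c^*$ of \eqref{c_ast}. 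Hence the inequality \eqref{22a} in fact persists for every $c>c^*$, not merely on $(c^*,1]$, so $(0,v^*)$ is unstable on all of $(c^*,\infty)$.

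Setting $c_1:=c^*\in(0,1)$ and keeping $c_2$ from Lemma~\ref{lem8a}, on the interval $(c_1,c_2)$ — which contains $1$ — the origin is a repeller and both semi-trivial equilibria are unstable. By Lemma~\ref{lemmaHsu} together with the remark that follows it, the only remaining alternative is convergence of every nontrivial solution to a coexistence equilibrium, i.e.\ strong persistence. The main obstacle is precisely the extension of the $(0,v^*)$ instability past $c=1$; everything else is an immediate assembly of the quoted lemmas, and the extension is settled by the monotonicity-in-$c$ observation above once one notices that $v^*$ does not depend on $c$.
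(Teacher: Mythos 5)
Your proof is correct and follows the same overall strategy as the paper: show that $(0,0)$ is a repeller (Lemma~\ref{lem3}) and that both semi-trivial equilibria are unstable on an interval of $c$ containing $1$, then invoke the dichotomy of Lemma~\ref{lemmaHsu} together with the remark following it. The difference is one of completeness, and it is in your favor. The paper's own proof is two sentences: it cites Lemma~\ref{lem8} for the instability of $(0,v^*)$ and Lemma~\ref{lem8a} for the instability of $(u^*,0)$, and concludes coexistence ``whenever $c$ satisfies both conditions.'' Read literally, the stated ranges --- $(c_1,1)$ in Lemma~\ref{lem8} and $[1,c_2)$ in Lemma~\ref{lem8a} --- concern different equilibria and do not overlap, so the paper leaves implicit both the use of Lemma~\ref{lem7} (instability of $(u^*,0)$ on $(0,1]$, which you concatenate with Lemma~\ref{lem8a} to cover $(0,c_2)$) and, more importantly, the extension of the instability of $(0,v^*)$ to $c\geq 1$. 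You correctly identify that extension as the only delicate point and settle it by observing that $v^*$ solves \eqref{semi_v} with $K_2=K$ and hence is independent of $c$, so that $\int_\Omega rP\bigl(1-v^*/(cK)\bigr)\,dx=\int_\Omega rP\,dx-\frac{1}{c}\int_\Omega rP\,v^*/K\,dx$ is strictly increasing in $c$ and positive exactly for $c>c^*$ of \eqref{c_ast}; this is precisely the computation already present in the proof of Lemma~\ref{lem8}, whose conclusion the paper states only on $(c^*,1]$ even though the argument yields $(c^*,\infty)$. In short, your write-up is valid, reaches the same constants ($c_1=c^*$, $c_2$ from Lemma~\ref{lem8a}), and makes explicit a step that the paper's exposition glosses over.
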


Next, competitive exclusion can be considered for $c>1$ as well.

\begin{Lemma} 
\label{lem9a}
Let \eqref{no_K} and \eqref{hull} hold for some $\gamma>0$, $\delta>0$. 
There exists $c_3 \in (1,\infty)$ such that whenever $c \in(c_3,\infty)$, system \eqref{eq:modified}
has no coexistence equilibrium.
\end{Lemma}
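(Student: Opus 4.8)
The plan is to exploit the same mechanism as in Lemma~\ref{lem9}, but in reverse: a large $c$ inflates the effective carrying capacity $cK$ of the first species while leaving that of the second species equal to $K$, so the first species should eventually exclude the second. Concretely, I would show that if $c$ is large enough, any hypothetical coexistence equilibrium $(u_s,v_s)$ is forced to satisfy $u_s(x)>K(x)$ on all of $\overline\Omega$, and that this is incompatible with the integrated form of the second equation in \eqref{eq:modified}. The argument thus splits into an a priori upper bound on $v_s$, a lower bound on $u_s$ that grows with $c$, and a final integration.

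First I would record an upper bound on $v_s$ that is uniform in $c$. Since $u_s\ge 0$, the second equation shows that $v_s$ is a subsolution of the logistic problem $\nabla\cdot[b\nabla(w/Q)]+r_2 r w(1-w/K)=0$, whose unique positive steady state is the solution $v^*$ of \eqref{semi_v} with $K_2=K$; hence $v_s\le v^*\le B:=\max_{\overline\Omega}v^*$ independently of $c$, using the bound \eqref{eq:v_star_bounds}. Next, writing $P_{\min}:=\min_{\overline\Omega}P>0$ and $P_{\max}:=\max_{\overline\Omega}P$ and recalling $m\le K\le M$ from \eqref{eq:K_bounds}, I would take the constant multiple $\underline u=\lambda P$ with $\lambda=(cm-B)/P_{\max}$, positive for large $c$. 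Because $\underline u/P$ is constant the diffusion term vanishes, and the inequality $\lambda P+v_s\le \lambda P_{\max}+B\le cm\le cK$ shows that $\underline u$ is a subsolution of the first equation with $v_s$ frozen as a coefficient. As $u_s$ is a positive, hence the unique, steady state of that logistic equation, the subsolution ordering gives $u_s\ge\lambda P\ge \lambda P_{\min}=(cm-B)P_{\min}/P_{\max}$.

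Setting $c_3:=\big(MP_{\max}/P_{\min}+B\big)/m$, which lies in $(1,\infty)$ since $MP_{\max}/P_{\min}\ge M\ge m$ and $B>0$, the previous bound yields $u_s(x)>M\ge K(x)$ everywhere for every $c>c_3$. I would then integrate the second equation over $\Omega$: the Neumann condition $\partial(v_s/Q)/\partial n=0$ annihilates the divergence term, leaving
\[
0=\int_\Omega r_2\, r\, v_s\left(1-\frac{u_s+v_s}{K}\right)dx.
\]
Since $u_s>K$ forces $1-(u_s+v_s)/K<0$ throughout $\Omega$, the integrand is nonpositive, and strictly negative on the nonempty open set where $r>0$ and $v_s>0$ (nonempty by the hypotheses on $r$ and by $v_s>0$ at a coexistence state). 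The integral is therefore strictly negative, a contradiction, so no coexistence equilibrium exists for $c>c_3$.

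The step demanding the most care is the justification of the two comparison inequalities $v_s\le v^*$ and $u_s\ge\lambda P$: the reaction terms are logistic and hence not monotone in the density, so these do not follow from a naive maximum principle. They rest instead on the standard fact that a logistic Neumann problem admits a unique, globally attracting positive steady state, so that any positive supersolution lies above it and any nonnegative nontrivial subsolution lies below it — precisely the device already used for $\underline v=v^*/2$ in Lemma~\ref{lem9}. I would also confirm that the effective carrying capacity $cK-v_s$ remains positive, which is guaranteed by $v_s\le B<cm$ once $c>c_3$, so that the first equation is genuinely logistic in $u_s$ and $u_s$ is indeed its unique positive steady state.
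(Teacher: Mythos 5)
Your proof is correct, and it supplies in full detail exactly the argument the paper leaves implicit: Lemma~\ref{lem9a} is stated there without a written proof, as the mirror image of Lemma~\ref{lem9}, and your scheme --- a $c$-independent upper bound $v_s\le v^*$, then a subsolution $\underline u=\lambda P$ with $\lambda=(cm-B)/P_{\max}$ forcing $u_s>K$ for $c$ large, then integration of the second equation using the Neumann-type boundary condition to reach a sign contradiction --- is precisely that mirrored sub/supersolution mechanism. The only departures are cosmetic (you derive $v_s\le v^*$ from uniqueness of the positive logistic steady state rather than from an explicit constant-multiple upper solution, and you exhibit an explicit admissible threshold $c_3=(MP_{\max}/P_{\min}+B)/m$), and your closing remark on why the comparisons require uniqueness/attractivity of the logistic steady state rather than a naive maximum principle is consistent with the level of rigor the paper itself employs in Lemma~\ref{lem9}.
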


Based on Lemmata~\ref{lem9} and \ref{lem9a}, we establish stability of the semi-trivial equilibria.

\begin{Prop}
\label{theorem2a} 		
Let \eqref{no_K} and \eqref{hull} hold for some $\gamma>0$, $\delta>0$. 
There exist $c_1 \in (0, 1)$ and $c_2 \in(1,\infty)$ such that for $c \in (0,c_1)$ all solutions of
\eqref{eq:modified} converge to the semi-trivial equilibrium $(0,v^*)$, while for $c \in (c_2,\infty)$
all solutions of \eqref{eq:modified} converge to the semi-trivial equilibrium $(u^*,0)$.
\end{Prop}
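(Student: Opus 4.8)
The plan is to reduce everything to the trichotomy of Lemma~\ref{lemmaHsu}, which is available because $(0,0)$ is a repeller by Lemma~\ref{lem3} (system \eqref{eq:modified} being the special case $K_1=cK$, $K_2=K$ of \eqref{eq:main_modified}). For a monotone system such as \eqref{eq:modified}, once the existence of a coexistence equilibrium (alternative (a)) is excluded, exactly one of the semi-trivial equilibria is globally asymptotically stable for all nontrivial initial data, and a \emph{linearly unstable} semi-trivial equilibrium cannot be that attractor; this is precisely the content of the remark following Lemma~\ref{lemmaHsu}. So in each $c$-range the recipe is: rule out coexistence, identify the unstable semi-trivial equilibrium, and conclude convergence to the other one.

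For the small-$c$ regime I would let $c_1$ be the threshold $c_2 \in (0,1)$ provided by Lemma~\ref{lem9}, so that \eqref{eq:modified} has no coexistence equilibrium for $c \in (0,c_1)$. Since $c_1 < 1$, Lemma~\ref{lem7} applies on $(0,c_1) \subset (0,1]$ and shows that $(u^*,0)$ is unstable for every $c \in (0,c_1)$. With alternative (a) excluded and $(u^*,0)$ unstable, Lemma~\ref{lemmaHsu} forces convergence of all solutions to $(0,v^*)$.

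For the large-$c$ regime I would set $c_2$ equal to the threshold $c_3 \in (1,\infty)$ from Lemma~\ref{lem9a}, which excludes coexistence for $c \in (c_2,\infty)$. What remains is the instability of $(0,v^*)$ for such $c$. Because $v^*$ solves \eqref{semi_v} with $K_2=K$ and does not depend on $c$, the factor $1-v^*/(cK)$ in the variational characterization \eqref{eig_p1a} of the principal eigenvalue $\sigma_1$ of \eqref{eig_p1} increases with $c$; hence $\sigma_1$ is nondecreasing in $c$. The positivity of $\sigma_1$ already established in Lemma~\ref{lem8} for $c$ slightly below $1$ therefore persists for all $c \geq 1$, so $(0,v^*)$ stays unstable on $(c_2,\infty)$ (indeed, for large $c$ the test function $\psi=P$ makes the Rayleigh quotient positive since $1-v^*/(cK)\to 1$). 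With coexistence excluded and $(0,v^*)$ unstable, Lemma~\ref{lemmaHsu} yields convergence of all solutions to $(u^*,0)$.

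The main obstacle is the large-$c$ case, where Lemma~\ref{lem8} is stated only for $c<1$ and must be carried past $c=1$; the monotone dependence of $\sigma_1$ on $c$ described above is what makes this routine. The only other care needed is bookkeeping: the thresholds coming from Lemmata~\ref{lem7}, \ref{lem8}, \ref{lem9} and \ref{lem9a} must be assembled into a single pair $c_1 \in (0,1)$ and $c_2 \in (1,\infty)$, after which the dichotomy of Lemma~\ref{lemmaHsu} is applied separately on $(0,c_1)$ and on $(c_2,\infty)$.
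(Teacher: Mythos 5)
Your proposal is correct and follows essentially the same route the paper intends: exclude coexistence via Lemmata~\ref{lem9} and \ref{lem9a}, establish instability of the appropriate semi-trivial equilibrium, and invoke the trichotomy of Lemma~\ref{lemmaHsu} (with $(0,0)$ a repeller by Lemma~\ref{lem3}). The one point worth noting is that the paper's own proof is a single sentence citing only Lemmata~\ref{lem9} and \ref{lem9a}, and it nowhere states a lemma giving instability of $(0,v^*)$ for $c>1$ (Lemma~\ref{lem8} is formulated only for $c\in(c_1,1)$), so your monotonicity argument --- that the principal eigenvalue in \eqref{eig_p1a} is nondecreasing in $c$ because $1-v^*/(cK)$ increases pointwise and $v^*$ is independent of $c$ --- supplies exactly the step the paper leaves implicit; equivalently, the test-function bound with $\psi=P$ from the proof of Lemma~\ref{lem8} already yields $\sigma_1>0$ for all $c\geq 1$, since $c^*<1$ there.
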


At this stage, we can proceed to the proof of Theorem~\ref{theorem_harv2}.

\begin{proof} ({\bf the proof of Theorem~\ref{theorem_harv2}})
We recall that  \eqref{eq:main_problem} has form \eqref{eq:modified} with notations
\eqref{transition}.

1) Fix $\beta \in (0,1)$. {\bf By Proposition~\ref{theorem1a}, there are $c_1\in(0,1)$ and $c_2\in(1,\infty)$
such that for $c\in (c_1,c_2)$, all solutions strongly persist.}
Denote $\alpha_1=1-\max\{1-c_2(1-\beta),0\}$,
$\alpha_2=1-c_1(1-\beta)$. Then $0\leq \alpha_1<\beta<\alpha_2<1$ and, by Proposition~\ref{theorem1a}, for $\alpha \in (\alpha_1,\alpha_2)$,
{\bf all solutions of \eqref{eq:main_problem} strongly persist.} 

2) For any fixed $\beta \in [0,1)$, by Proposition~\ref{theorem2a}, there is a $c_1\in (0,1)$ such that for $c\in (0,c_1)$, all solutions of 
\eqref{eq:modified} converge to $(0,v_{\beta}^*)$. Denote $\alpha_3=1-c_1(1-\beta) \in (\beta,1)$. 
Then for $\alpha \in (\alpha_3,1)$ we get $c \in (0,c_1)$ and thus all solutions of \eqref{eq:main_problem} converge to $(0,v_{\beta}^*)$,
where $v_{\beta}^*$ is a solution of \eqref{semi_v} with $K_2=(1-\beta)K$, $r_2=1-\beta$.

3) Denote $\displaystyle \beta_1=1- \frac{1}{c_2}$, where $c_2>1$ is defined in Proposition~\ref{theorem2a}, 
such that for all $c\in (c_2,\infty)$, all solutions of
\eqref{eq:modified} converge to $(u_{\alpha}^*,0)$. Next, let $\beta \in (\beta_1,1)$ be fixed, and $\alpha_0=1-(1-\beta)c_2$.
Then $c_2>1$ and $c \in (c_2,\infty)$ 
correspond to $0<\alpha_0<\beta<1$ and $\alpha \in [0,\beta)$, respectively.
Thus for any $\alpha\in [0,\alpha_0)$, all solutions of 
\eqref{eq:main_problem} converge to $(u_{\alpha}^*,0)$, where
$u_{\alpha}^*$ satisfies \eqref{semi_u} with $K_1=(1-\alpha)K$, $r_1=1-\alpha$.
This concludes the proof.
\end{proof}

Theorem~\ref{th_extinction} can be justified, similarly to \cite{Korobenko2013}, using differential inequalities.

\begin{proof} ({\bf the proof of Theorem~\ref{th_MSY}})
We recall that for a fixed $K$, the maximum of $f(x)=x(1-x/K)$ is attained at $x=K/2$ and equals $\frac{1}{4}K$.
Integrating the first equation in \eqref{eq:main_problem} over $\Omega$ and taking into account the boundary conditions, we get
\begin{align*}
{\rm SY} = & \int_{\Omega} \alpha r(x) u_s(x)~dx = \int_{\Omega}  r(x) u_s(x) \left( 1- \frac{u_s(x)+v_s(x)}{K(s)} \right) ~dx \\
\leq  & \int_{\Omega} r(x) u_s(x) \left( 1- \frac{u_s(x)}{K(x)} \right)~dx \leq \int_{\Omega} \frac{1}{4} r(x) K(x)~dx.
\end{align*}
For either $P=K$, $\alpha=0.5$, $\beta> 0.5$ and any $Q$, or $\beta \geq 0.5$ and $\displaystyle \nabla \cdot \left[ b(x) \nabla \left( \frac{K}{Q} \right) \right] \not\equiv 0$, the stationary solution $(0.5K(x),0)$
leads to \eqref{MSY}. 

Further, in the case of the ideal free pair, MSY in \eqref{MSY} is attained for appropriate $P$ and $Q$, $P+Q=K$, $\alpha=\beta=0.5$.
Similarly to the above case
\begin{align*}
{\rm SY} = & \int_{\Omega} \alpha r(x) u_s(x)~dx + \int_{\Omega} \beta r(x) v_s(x)~dx
\\ 
= & \int_{\Omega}  r(x) u_s(x) \left( 1- \frac{u_s(x)+v_s(x)}{K(s)} \right) ~dx+ 
\int_{\Omega}  r(x) v_s(x) \left( 1- \frac{u_s(x)+v_s(x)}{K(s)} \right)~dx
\\
= & \int_{\Omega}  r(x) (u_s(x)+v_s(x)) \left( 1- \frac{u_s(x)+v_s(x)}{K(s)} \right)~dx
\\ \leq & \int_{\Omega} \frac{1}{4} r(x) K(x)~dx,
\end{align*}
which concludes the proof.
\end{proof}

\begin{Rem}
\label{remark2}
Let us note that the maximum $\displaystyle \int_{\Omega} u(x)~dx$ may be attained for $\displaystyle \nabla \cdot \left[ a(x) \nabla \left( \frac{K}{P} \right) \right] \not\equiv 0$, see \eqref{higher_average}
in Remark~\ref{remark1}.
\end{Rem}

\section*{Acknowledgment}

{\bf
The authors are grateful to the referees whose valuable comments significantly contributed to the 
paper presentation. 
}
The authors were partially supported by the NSERC research grant RGPIN-2015-05976. {\bf The second 
author was also supported by Pacific Institute for the Mathematical Sciences with PIMS Student
Training Acceleration Award.
}


\end{document}